%% file: kakeyabourbaki4.tex
\documentclass{amsart}
\usepackage{geometry}                
\usepackage{graphicx}
\usepackage{amssymb}
\usepackage{epstopdf,amsmath}
\usepackage{enumitem}

\usepackage{tikz}
\usepackage{geometry}
\usepackage{pgfplots}

\usepgfplotslibrary{colorbrewer}
\pgfplotsset{width=8cm,compat=1.9}

\newtheorem{prop}{Proposition}[section]
\newtheorem{definition}{Definition}[section]
\newtheorem{lemma}[prop]{Lemma}
\newtheorem*{lemma*}{Lemma}

\newtheorem{theorem}[prop]{Theorem}
\newtheorem{conj}[prop]{Conjecture}

\newtheorem*{prop*}{Proposition}
\newtheorem*{theorem*}{Theorem}

\newcommand{\bes}{\beta_{\textrm{sticky}}}

\newcommand{\WW}{\mathbb{W}}

\newcommand{\FF}{\mathbb{F}}
\newcommand{\CC}{\mathbb{C}}
\newcommand{\TT}{\mathbb{T}}

\newcommand{\RR}{\mathbb{R}}

\newcommand{\SSS}{\mathbb{S}}

\newcommand{\KK}{\mathbb{K}}
\newcommand{\VV}{\mathbb{V}}

\DeclareMathOperator{\dist}{dist}

\title{The Kakeya conjecture, after Wang and Zahl}
\author{Larry Guth}

\begin{document}

\maketitle

This is a survey article about the proof of the Kakeya conjecture in three dimensions.   The Kakeya conjecture is a problem about the intersection patterns of thin tubes in Euclidean space.   

A Kakeya set in $\RR^n$ is a set that contains a unit line segment in every direction.   Around 1920,  Besicovitch gave an example of a Kakeya set in $\RR^2$ with arbitrarily small Lebesgue measure.   Around 1970,  Fefferman gave a counterexample to a well-known problem about Fourier multipliers which crucially used Besicovitch's example.   The same idea shows that a number of open problems in Fourier analysis are connected to Kakeya sets.   These problems in Fourier analysis connect to quantitative questions about Kakeya sets,  such as,  ``What is the infimal Hausdorff dimension of a Kakeya set in $\RR^n$?''   For example,  the Stein restriction conjecture in Fourier analysis implies that every Kakeya set in $\RR^n$ has Hausdorff dimension $n$.   The connection between Fourier analysis and Kakeya problems is described in the survey article \cite{Tsurvey}. 

The Kakeya conjecture for Hausdorff dimension says that every Kakeya set in $\RR^n$ has Hausdorff dimension $n$.   In the 1980s,  Davies proved that every Kakeya set in $\RR^2$ has Hausdorff dimension 2,  and the proof is only a couple pages.   But proving the conjecture for any $n \ge 3$ is much more difficult.   In \cite{WZ},  Wang and Zahl proved the Kakeya conjecture in dimension 3.   In dimension $n \ge 4$, the conjecture is currently open.

The proof of the Kakeya conjecture builds on important ideas by many people,  including Bourgain, Wolff,  Katz, Laba, Tao, Orponen, and Shmerkin.    The goal of this survey is to give an overview of all the ideas in the proof.

\vskip10pt

{\bf Acknowledgements.} Thanks to Seminaire Bourbaki for the invitation to write this survey.   Thanks to Jacob Reznikov for many of the pictures in this article.   And thanks to the many people with whom I have talked about the Kakeya problem over many years,  including Nets Katz,  Hong Wang,  Joshua Zahl,  Pablo Shmerkin,  Alex Cohen,  and Dima Zakharov.

\section{Statement of main results}

In this survey,  we will not use the language of Hausdorff dimension.   For understanding the proof,  and also for applications in Fourier analysis,  the most useful language is in terms of sets of thin tubes.  

Suppose that $\TT$ is a set of $\delta$-tubes in $\RR^n$ with length 1.    We write $U(\TT)$ for $\cup_{T \in \TT} T$.    One version of the Kakeya conjecture in dimension $n$ says 

\begin{conj} \label{conjkakvol} For every $\epsilon >0$, there is a constant $c(n, \epsilon)$ so that if $\TT$ is a set of $\sim \delta^{-(n-1)}$ $\delta$-tubes in $\RR^n$ in $\delta$-separated directions, then 

$$ | U(\TT) | \ge c(n, \epsilon) \delta^\epsilon.$$

\end{conj}

\noindent Wang and Zahl proved this conjecture in dimension $n=3$. 

In fact, they proved a more general estimate called the Kakeya conjecture with convex Wolff axioms.   This therem roughly says that the only way a set of tubes in $\RR^3$ can overlap a lot is by clustering into convex sets.    If $K \subset \RR^3$ is a convex set,  we define

$$ \TT[K] := \{ T \in \TT: T \subset K \}. $$

\noindent We define the density of $\TT$ in $K$ as

$$ \Delta(\TT, K) = \frac{ \sum_{T \in \TT[K]} |T|}{ |K| }. $$

\noindent The density of $\TT$ in $K$ measures how much the tubes of $\TT$ pack into $K$.   Here is a picture to help illustrate the definition.

\includegraphics[scale=.7]{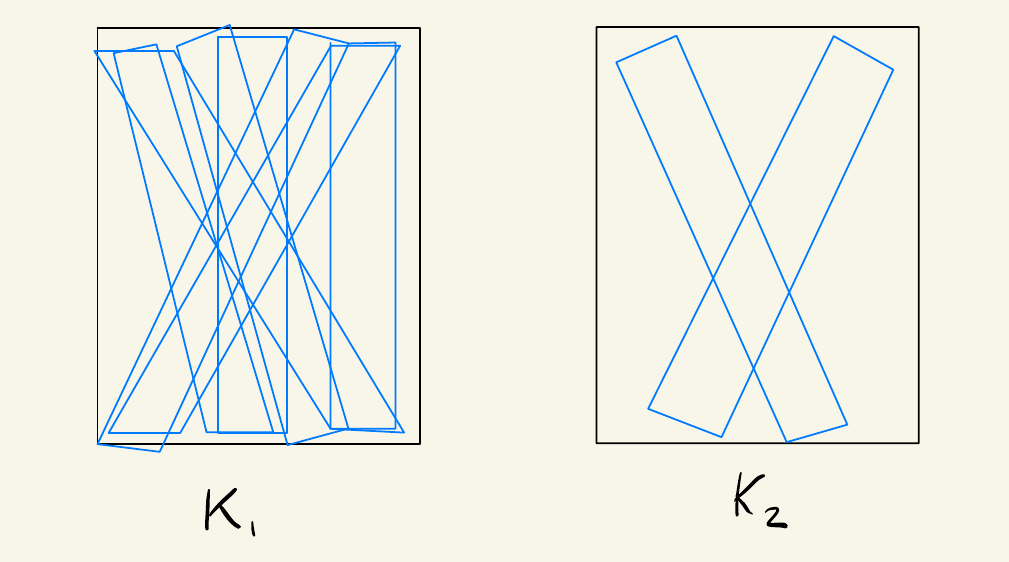}

In this picture, $\Delta(\TT, K_1) > 1$ and $\Delta(\TT, K_2) < 1$.   Next we consider the maximum density over all convex sets $K$. 

$$ \Delta_{max}(\TT) := \max_{K \textrm{ convex}} \Delta(\TT, K). $$

Let us define the typical multiplicity of $\TT$ as

$$ \mu(\TT) = \frac{ \sum_{T \in \TT} |T|}{ | U(\TT) |}. $$

\noindent  On average,  a point $x \in U(\TT)$ lies in $\mu(\TT)$ tubes $T \in \TT$.   Notice that $\mu(\TT[K]) \ge \Delta(\TT,  K)$,  and so there must be a point $x$ that lies in at least $\Delta_{max}(\TT)$ tubes of $\TT$.  

The main theorem of \cite{WZ}  says that $\mu(\TT)$ can only be large when $\Delta_{max}(\TT)$ is large.

\begin{theorem} \label{main} (Wang-Zahl,  \cite{WZ})  If $\TT$ is a set of $\delta$-tubes in $\RR^3$,   and $\Delta_{max}(\TT) \lessapprox 1$,  then

$$ \mu(\TT) \lessapprox 1. $$

\end{theorem}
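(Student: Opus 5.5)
The plan is an induction on scales organized by a quantity that measures how far the theorem can fail. For $\beta \ge 0$, let $P(\beta)$ be the statement: for every set $\TT$ of $\delta$-tubes in $\RR^3$ with $\Delta_{max}(\TT) \lessapprox 1$ we have $\mu(\TT) \lessapprox \delta^{-\beta}$. The statement $P(\beta)$ holds trivially for $\beta$ large, since $\mu(\TT) \le |\TT| \lesssim \delta^{-4}$. Let $\beta^*$ be the infimum of the $\beta$ for which $P(\beta)$ holds; Theorem \ref{main} is the claim $\beta^* = 0$. Suppose $\beta^* > 0$ and take an extremal configuration $\TT$ with $\mu(\TT) \approx \delta^{-\beta^*}$. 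The goal is to reach a contradiction by showing that an extremal $\TT$ must have special structure, and that this structure forces a gain.

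The first structural step is a multiscale dichotomy. For each intermediate scale $\delta < \rho < 1$, cover the tubes by $\rho$-tubes $\TT_\rho$. If $\TT$ is not "sticky", meaning that at some scale many $\delta$-tubes cluster into a convex set much fatter than a $\rho$-tube, we apply $P(\beta^*)$ separately inside each convex cluster after rescaling, and then apply it to the coarse family of clusters. The convex Wolff axiom hypothesis is inherited at both levels after rescaling; this is exactly why the theorem is phrased for arbitrary convex $K$ rather than only for tubes. Multiplicities multiply across scales, so $\mu(\TT) \lessapprox \mu_{\text{fine}} \cdot \mu_{\text{coarse}}$. When the clustering is genuinely non-tube-like, one factor must be strictly better than the extremal bound, and this gives a gain. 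We are thereby reduced to the sticky case: at every scale $\rho$ the $\rho$-tubes $\TT_\rho$ themselves satisfy the Wolff axioms and each $T_\rho$ contains about the expected number of $\delta$-tubes. This is the Katz--Laba--Tao notion of stickiness.

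In the sticky case, the second step is to analyze the local structure. Fix a $\rho$-ball $B$ with $\delta \ll \rho \ll 1$. The tube segments through $B$, rescaled, form a family to which $P(\beta^*)$ applies again. Combining this with the global structure, extremality forces the local picture to be rigid: near a typical point, the tubes passing through a small ball have directions concentrated near a plane. This is the planiness/graininess phenomenon, in which $U(\TT)$ looks locally like a union of thin slabs ("grains") that fit together coherently across scales. The third step rules this structure out. Following Katz--Zahl and Wang--Zahl, the sticky grainy configuration is converted into a discretized sum--product or projection-type problem, for instance a set whose slopes behave like a field-like object. One then invokes the discretized projection theorems of Bourgain and Orponen--Shmerkin, or a Furstenberg-type estimate, which show that such sets cannot exist in $\RR$ at non-trivial dimension. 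Euclidean space has no intermediate subfields, unlike finite fields where counterexamples exist, and this yields a $\delta^{-\epsilon}$ gain contradicting extremality.

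I expect the main obstacle to be the sticky case. The difficulty is in proving that extremality really forces planiness and graininess at many scales simultaneously, and in converting this into an input that the projection theorem can digest with a quantitative gain. The non-sticky reduction is comparatively routine bookkeeping with rescaling and dyadic pigeonholing. In the sticky case, I would first try to prove a "two-scale" version of the statement, namely that $\mu$ is controlled by the $\rho$-tube multiplicity times the multiplicity inside a $\rho$-ball, and then iterate it.
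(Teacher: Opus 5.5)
Your sticky-case outline points in the right direction. The genuine gap is in the first step, the reduction to the sticky case. You call it routine bookkeeping, but it is the hardest and last-understood step of the proof, and the mechanism you give cannot work.

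Your claim that the convex Wolff axioms are ``inherited at both levels'' is exactly the definition of stickiness, so it cannot handle the non-sticky case. With $|\TT|\approx\delta^{-2}$, densities multiply across scales: $\Delta(\TT[T_\rho],T_\rho)\,\Delta(\TT_\rho,B_1)\approx\delta^2|\TT|\approx 1$. Non-stickiness at scale $\rho$ means $|\TT[T_\rho]|\ll(\rho/\delta)^2$. Then the fine level has density $\ll 1$, so the coarse level has $|\TT_\rho|\gg\rho^{-2}$, i.e.\ $\Delta_{max}(\TT_\rho)\gg 1$, and $P(\beta^*)$ does not apply to it. Regrouping coarse tubes into their densest convex clusters only moves the excess density inside the clusters, where $P(\beta^*)$ is again unavailable.

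Non-stickiness also means the tubes cluster \emph{less} than the axioms allow, not more. In the model non-sticky configuration, $\TT_\rho$ is spread evenly over $B_1$ at density $\gg 1$ and there are no fat clusters at all, so your dichotomy gives no gain exactly where one is needed. The same inheritance error appears in your sticky step: the segments of $\TT$ in a small ball need not satisfy the Wolff axioms. The paper instead gets grains from the perfect overlap property, i.e.\ near-equality in $\mu(\TT)\lessapprox\mu(\TT_\rho)\mu(\TT[T_\rho])$. That is legitimate there because both factors are themselves sticky Kakeya sets.

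What is missing is the Wang--Zahl mechanism. The paper factors $\mu(\TT)\lessapprox\mu(\TT[T_\rho])\,\mu(\TT_B)$, where $\TT_B$ is the family of $\delta\times\delta/\rho$ segments in a ball $B$ of radius $\delta/\rho$. The first factor gains from non-stickiness, since $|\TT[T_\rho]|\ll(\rho/\delta)^2$. Because $\TT_B$ need not obey the Wolff axioms, the argument splits according to the shape of the densest convex set $K\subset B$:
\begin{enumerate}
\item If $K$ is a tube, induct.
\item If $K=B$ or $K$ is a fat plank, use the high density lemma for families whose densest convex set is the ambient ball: $|U(\tilde{\TT})|\gtrapprox(\delta^2|\tilde{\TT}|)^{1-\beta}\delta^{2\beta}$, where $\delta^{-2\beta}$ is your $\delta^{-\beta^*}$.
\item If $K$ is a thin plank meeting other planks transversely, or a thin slab, use the $L^2$ method.
\item If thin planks meet tangentially, rescale their common slab to a cube. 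The planks become a family of tubes obeying the Wolff axioms, and you induct.
\end{enumerate}
The slab and tangential cases need non-stickiness at all intermediate scales, which requires a further reduction.

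Crucially, the high density lemma is itself proved by a Keleti--Shmerkin type multiscale decomposition. That argument shows the densest sets are round tubes, locates good scales, and applies the \emph{sticky Kakeya theorem} on blocks of scales. So sticky Kakeya is an input to the reduction, not just its endpoint. That the reduction is not formal is also suggested by the Katz--Zahl example over $\FF_p[x]/(x^2)$: there the sticky analogue appears true but the general statement fails.
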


This theorem is called the convex Wolff axioms version of the Kakeya conjecture.   It directly implies Conjecture \ref{conjkakvol} for $n=3$.   
(Wang and Zahl also proved a somewhat more general theorem which implies that a Kakeya set in $\RR^3$ has Hausdorff dimension 3.)

The proof of Theorem \ref{main} was completed in \cite{WZ}, but the full proof includes many papers with important contributions by Wolff,  Bourgain,  Katz, Laba, Tao, Orponen, and Shmerkin.   The goal of this survey is to describe the main ideas of the whole proof.  

\subsection{Notation}

Informally,  we write $A \approx B$ to mean that $A$ and $B$ are approximately the same size.   We write $A \lessapprox B$ to mean that either $A < B$ or $A \approx B$.   And we write $A \ll B$ to mean that $A$ is much smaller than $B$.   For more detailed statements and outlines,  you can look at \cite{G2} and \cite{GWZ}.

\section{The hero: multiscale analysis}

The hero of our story is looking at the problem at many different scales.  In this section,  we explain what this means and give a hint about why it will be important.

Define $\beta$ to be the infimal exponent so that for every set $\TT$ of $\delta$-tubes in $\RR^3$ with $\Delta_{max}(\TT) \lessapprox 1$, 

\begin{equation} \label{defbeta} \mu(\TT) \lessapprox | \TT |^\beta.
\end{equation}

The Kakeya theorem,  Theorem \ref{main},  says that $\beta = 0$.   We say that $\TT$ is a worst-case Kakeya set if $\Delta_{max}(\TT) \lessapprox 1$ and $\mu(\TT) \approx |\TT|^\beta$.   The proof will be by contradiction.   We suppose $\beta > 0$.   We let $\TT$ be a worst-case Kakeya set.   We will prove that $\TT$ would have to have a lot of geometric and algebraic structure.   Using this structure we will eventually get a contradiction.

To exploit the fact that $\TT$ is a worst-case Kakeya set,  we will compare $\TT$ with other sets of tubes $\TT'$.    We will find other sets of tubes $\TT'$ which are related to $\TT$ and obey $\Delta_{max}(\TT') \lessapprox 1$.   By the definition of $\beta$ in \eqref{defbeta},  we know that $\mu(\TT') \lessapprox |\TT'|^\beta$.   Since $\TT'$ is related to $\TT$,  this bound leads to information about $\TT$.   We will find these sets of tubes $\TT'$ by looking at the original set of tubes at multiple scales.
   
 Suppose that $\TT$ is a set of $\delta$-tubes.  Given a scale $\rho \in [\delta, 1]$,  we let $\TT_\rho$ denote the set of $\rho$-tubes formed by thickening the $\delta$-tubes of $\TT$.   When we thicken two distinct $\delta$-tubes,  we may get nearly identical $\rho$-tubes.   When this happens we identify the $\rho$-tubes.   So we typically have $| \TT_\rho | \ll | \TT |$. 

For each $T_\rho \in \TT_\rho$,  we define

$$ \TT[T_\rho] = \{ T \in \TT: T \subset T_\rho \}. $$

    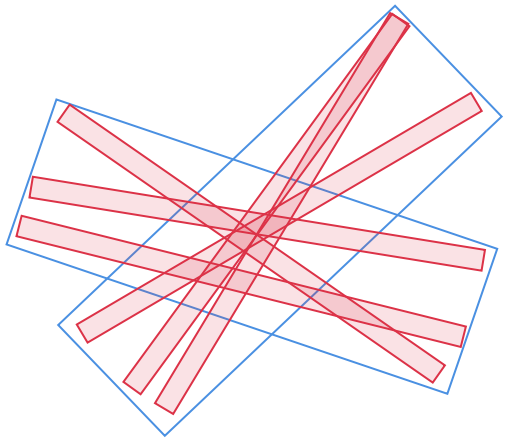
\begin{figure} [h]
      \begin{center}
        \input{intersecting_tubes_diagram}
      \end{center}
      \caption{Intersecting tubes}\label{fig:int_rect}
    \end{figure}

Figure \ref{fig:int_rect} illustrates
these different sets of tubes.   The tubes of $\TT$ are the thin red tubes,  and the tubes of $\TT_\rho$ are the thick blue tubes.   In the picture,  each set $\TT[T_\rho]$ consists of 3 $\delta$-tubes. 

Our original set of tubes $\TT$ is the disjoint union of $\TT[T_\rho]$: 

\begin{equation} \label{T=UTr}  \TT = \bigsqcup_{T_\rho \in \TT_\rho} \TT[T_\rho]. \end{equation}

After some pigeonholing arguments,  we can assume that $|\TT[T_\rho]|$ is roughly constant for all $T_\rho \in \TT_\rho$.   So for any $T_\rho \in \TT_\rho$,  we get

\begin{equation} \label{T=inout} | \TT | \approx |\TT[T_\rho]| |\TT_\rho|. \end{equation}

We will get a lot of information about $\TT$ by appling \eqref{defbeta} to $\TT_\rho$ and $\TT[T_\rho]$ for many different scales $\rho$.   Over the course of the proof,  we will also find other sets of tubes $\TT'$ related to $\TT$ and apply \eqref{defbeta} to them.

 I believe that Tom Wolff was the first person to think about this multi-scale structure in the Kakeya problem,  in unpublished work shortly before his death.   He shared his ideas with Katz, Laba, and Tao, who developed them further in the remarkable paper \cite{KLT},  and then the ideas were developed further by many people.

\section{A key obstacle: the Heisenberg group}

Next we introduce one of the key obstacles to proving the main theorem.   There are cousin problems that sound quite similar to the Kakeya conjecture but behave differently.   For example, the natural analogue of Theorem \ref{main} in $\CC^3$ is false.  The counterexample is called the Heisenberg group example.  It was first published by Katz-Laba-Tao in \cite{KLT}.   The idea that problems of this type may behave differently over different fields was first noted by Tom Wolff in \cite{Wsurvey}.  

We define a metric on $\CC^n$ by identifying it with $\RR^{2n}$.  We define a complex tube in $\CC^n$ with radius $r$ and length $L$  by taking a complex line in $\CC^n$,  intersecting it with a ball of radius $L$,  and then taking its $r$-neighborhood.   We define a complex $\delta$-tube to be a tube of radius $\delta$ and length 1.   The quantities $\Delta(\TT, K)$ and $\Delta_{max}(\TT)$ can be defined roughly as above.   

In $\CC^3$,  there is a set of complex $\delta$-tubes $\TT$ with $\Delta_{max}(\TT) \lessapprox 1$ and $\mu(\TT) \approx |\TT|^{1/4}$.    This example shows that the complex analogue of Theorem \ref{main} is false.  This example is called the Heisenberg group example.   It is based on a quadratic real algebraic hypersurface in $\CC^3$.   There are a couple choices for this hypersurface.  One is the surface $H$ defined by 

$$H = \{ (z_1, z_2, z_3:  |z_1|^2 + |z_2|^2 - |z_3|^2 = 1 \} $$

\noindent  This hypersurface contains many complex lines.   For example,  if $\alpha$ is a unit complex number,  then the line defined by $z_1 = 1$,  $z_3 = \alpha z_2$ lies in $H$.   All these lines pass through the point $(1,0,0) \in H$.   The surface $H$ is very symmetric: it is symmetric under the action of the group $U(2,1)$, which acts transitively on $H$.   By symmetry, there are infinitely many lines through every point of $H$.   Taking $\delta$-neighorhoods of these lines gives a set of $\delta$-tubes $\TT$ where $\Delta_{max}(\TT) \lessapprox 1$ and yet $\mu(\TT) \approx \delta^{-1} \approx |\TT|^{1/4}$.  

In \cite{Whair} in 1995,  Wolff proved that if $\TT$ is a set of $\delta$-tubes in $\RR^3$ with $\Delta_{max}(\TT) \lessapprox 1$,  then $\mu(\TT) \lessapprox | \TT |^{1/4}$.   It has been very difficult to improve on the exponent $1/4$.   In 2001,  in \cite{KLT},  Katz-Laba-Tao improved $1/4$ to $1/4 - \epsilon$ for some tiny $\epsilon > 0$,  under an additional technical assumption.   This technical assumption was removed by Katz-Zahl in \cite{KZ} in 2019.   The exponent $1/4 - \epsilon$ was the best known exponent before the recent work of Wang and Zahl,  giving the sharp exponent.   

Understanding the structure of the Heisenberg group example is essential to the proof of the Kakeya conjecture.  Roughly speaking, the proof shows that if the Kakeya conjecture was false,  a worst-case Kakeya set would need to have structural properties similar to those of the Heisenberg group.  Finally, these strong structural properties lead to a contradiction.  In the next section, we discuss these key structures.

\section{Key structures and outline of the proof}

In this section we introduce the key structures of the Heisenberg group example which guide the proof of the Kakeya conjecture.

\subsection{Grain structure}

Write $N_w(X)$ for the $w$-neighborhood of $X$: 

$$N_w(X) = \{ x \textrm{ so that } \dist(x, X) < w \}. $$

Recall that $H$ is a smooth real 5-manifold in $\RR^6$.  Therefore, if $p \in H$, $N_{\delta} H \cap B_{\sqrt{\delta}}(p)$ is essentially the $\delta$-neighborhood of a 5-dimensional disk.  We describe the situation as follows:

\vskip10pt

{\bf Grain structure.} For each $p \in H$, we can choose unitary coordinates $w_1, w_2, w_3$ on $B_{\sqrt{\delta}}(p)$ so that $N_{\delta} H \cap B_{\sqrt{\delta}}(p) = B^2(\sqrt \delta) \times A$, where 

\begin{itemize}

\item $B^2(\sqrt{\delta})$ is the ball of radius $\sqrt{\delta}$ in $\CC^2$

\item $A$ is the $\delta$-neighborhood of $\RR$ in $B^1(\sqrt{\delta} \subset \CC$.  

\end{itemize}

We will see that if $\beta > 0$,  then a worst-case Kakeya set in $\RR^3$ would have a similar grain structure: for a typical ball $B = B_{\sqrt{\delta}} \subset U(\TT_{\sqrt{\delta}}$,  we can choose coordinates so that $U(\TT) \cap B = [0, \sqrt{\delta}]^2 \times A$,  where $A \subset [0, \sqrt{\delta}]$ is a union of $\delta$ intervals. 

Figure \ref{fig:grains} illustrates the situation.

    \begin{figure} [h!]
      \begin{center}
\includegraphics[scale=.7]{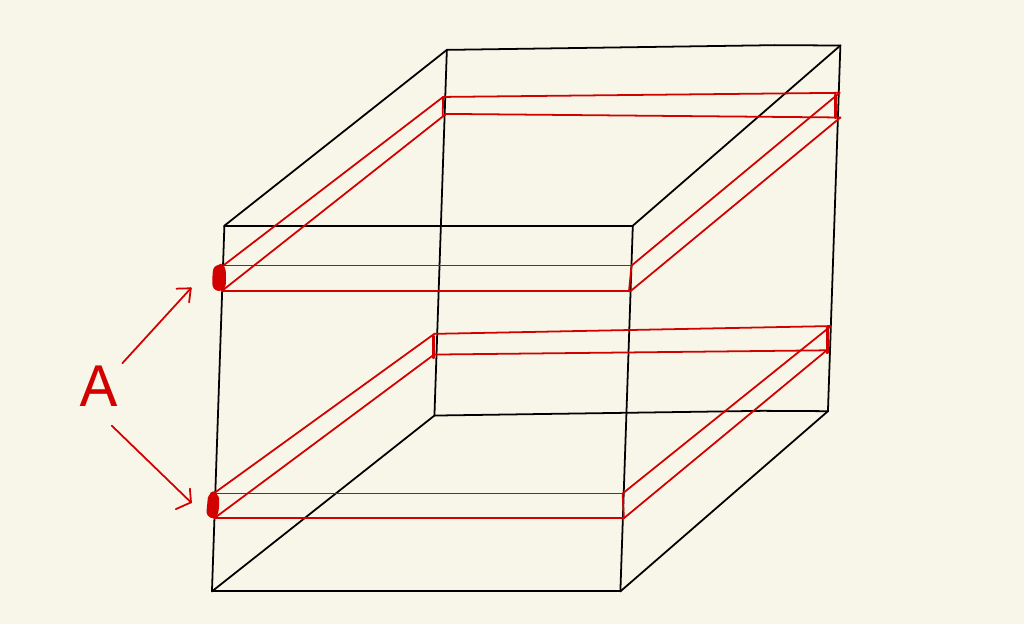}       
      \end{center}
      \caption{Grain structure}\label{fig:grains}
    \end{figure}

The parallel slabs in the picture are parallel to the $(x_1,x_2)$-plane and have thickness $\delta$.    The heights of these slabs correspond to the set $A$. 

The word grain is supposed to evoke the grains in a piece of wood.  Each slab in the picture is called a grain.  We will call $B_{\sqrt{\delta}}(p)$ a grain box.  The grains in a grain box are all parallel to each other.

Each point $z \in H$ lies in a grain, which we call the grain through $z$.

\subsection{Complex conjugation structure}

The definition of $H$ involves complex conjugation, and the structure of $H$ is closely related to complex conjugation.  One connection between complex conjugation and the geometry of $H$ comes from considering the slopes of grains along a line.

Fix a line $\ell \subset H$.   We choose coordinates so that the line $\ell$ is the $z_1$ axis.  Each point $z \in \ell$ lies in a grain, which is a complex 2-plane containing $\ell$.  Such a 2-plane is given by an equation $z_3 = s z_2$, where $s \in \CC$.  Suppose the grain of $H$ through $z = (z_1, 0, 0) \in \ell$ is given by $z_3 = s(z_1) z_2$.  So $s(z_1)$ describes the slope of the grain through $(z_1, 0, 0) \in \ell$.   For an appropriate choice of coordinates $z_1, z_2, z_3$,  the slope function is given by:

\begin{equation} \label{compconj1} s(z_1) = \bar z_1 \end{equation}

We refer to this equation as the complex conjugation structure of $H$.  

\vskip10pt

We can also define a slope function $s(x)$ related to a worst-case Kakeya set in $\RR^3$.  First we will prove that a worst-case Kakeya set in $\RR^3$ has a grain structure as above.  Suppose that $\ell$ is the core line of a tube $T \in \TT$.   We choose coordinates so that the line $\ell$ is the $x_1$-axis.   Each point $(x_1,0,0) \in \ell$ lies in a grain,  which is a plane given by $x_3 = s(x_1) x_2$.   So $s(x_1)$ is the slope of the grain through through $(x_1,0,0)$.   We will prove that in some sense the function $s(x_1)$ ``is similar to complex conjugation''.   

To get a feeling for what this might mean,  let's return to the Heisenberg group example.   In the Heisenberg group example,  $A$ is essentially $\RR \subset \CC$ and the slope function is $s(z_1) = \bar z_1$.   The set $A$ and the slope function $s(z_1)$ interact in a nice way.  For instance, for any $z_1 \in \CC$, 

\begin{equation} \label{naivecompconj} A + s(z_1) z_1 = \RR + |z_1|^2 = \RR = A \end{equation}

For a worst-case Kakeya set,  we will show that the set $A$ and the slope function $s$ also interact in a nice way in a similar spirit to \eqref{naivecompconj}.   We postpone the precise statement to Section \ref{seccompconj}.  It is a little more complicated than \eqref{naivecompconj}, 
involving two different slope functions along two different lines.

\subsection{Stickiness}

If $\TT$ is the Heisenberg group example at scale $\delta$,  then the thicker tubes $\TT_\rho$ are the Heisenberg example at scale $\rho$.   This leads to some nice numerology about $|\TT_\rho|$.   Let $|T^{\CC}_\rho|$ denote the volume of a complex $\rho$-tube in $\CC^3$.   Then for the set $\TT$ of complex tubes from the Heisenberg group,  we have $|\TT| \sim |T^{\CC}_\delta|^{-1}$ and for each $\rho \in [\delta, 1]$,  $|\TT_\rho| \sim |T^{\CC}_\rho|^{-1}$.   This numerology is called the `sticky' case,  for reasons that we explain below.  

\vskip10pt

{\bf Stickiness.}  If $\TT$ is a set of $\delta$-tubes in $\RR^n$,  then $\TT$ is sticky if $|\TT_\rho| \approx |T_\rho|^{-1}$ for each $\rho \in [\delta, 1]$.  

\vskip10pt

In $\RR^n$,  a $\rho$-tube has volume $|T_\rho| \sim \rho^{n-1}$,  and so $\TT$ is sticky if

\begin{equation} \label{eqsticky} |\TT_\rho | \approx \rho^{-(n-1)} \textrm{ for all } \rho \in [\delta, 1]
\end{equation}

Recall from \eqref{T=inout} that $|\TT| \approx |\TT_\rho| |\TT[T_\rho]|$,  and so if $\TT$ is sticky,  then

\begin{equation} \label{eqstickyb} |\TT [T_\rho]  | \approx (\delta/ \rho)^{-(n-1)} \textrm{ for all } \rho \in [\delta, 1], T_\rho \in \TT_\rho
\end{equation}

For comparison, if $\TT$ obeys $\Delta_{max}(\TT) \lessapprox 1$, then we have $| \TT [ T_\rho] | \lessapprox (\delta / \rho)^{-(n-1)}$.  So a set of tubes $\TT$ with $\Delta_{max}(\TT) \lessapprox 1$ is sticky if $|\TT [T_\rho]|$ is as large as possible.  The name sticky comes from the following image.  If two tubes $T_1, T_2$ lie in the same fatter tube $T_\rho$, then they are ``stuck together''.  The tubes in a sticky Kakeya set stick together as much as possible, given the condition $\Delta_{max}(\TT) \lessapprox 1$.  

\vskip10pt

To summarize,  the Heisenberg group example has three important structures: stickiness,  grain structure,  and complex conjugation structure.

\subsection{Outline of the proof}

Now we can outline the proof of the main theorem.
Recall that $\beta$ is the infimal exponent so that for every set $\TT$ of $\delta$-tubes in $\RR^3$ with $\Delta_{max}(\TT) \lessapprox 1$, 

\begin{equation} \mu(\TT) \lessapprox | \TT |^\beta.
\end{equation}

\noindent Our goal is to prove that $\beta = 0$.   We say that $\TT$ is a worst-case Kakeya set if $\Delta_{max}(\TT) \lessapprox 1$ and $\mu(\TT) \approx |\TT|^\beta$.   The proof is by contradiction.   We suppose $\beta > 0$.   We let $\TT$ be a worst-case Kakeya set.

\begin{itemize}

\item Step 1.   Since $\TT$ is worst-case it must be sticky.

\item Step 2.   Then $\TT$ must have grain structure.

\item Step 3.   Then $\TT$ must have something like complex conjugation structure.

\item Step 4.  No such structure exists.  (There is no operation on $\RR$ which has properties similar to complex conjugation.)

\end{itemize}

This is the logical order of the proof but it is not the chronological order.  In particular,  Step 1 was the last step to be understood.

We will explain the proof in chronological order.   Here is an outline.

In the first big part of the paper,  we describe the proof of the sticky case of the main theorem.    This proof was outlined by Katz and Tao in unpublished work in the 2000s,  and shared in a blog post \cite{T}.   It has several steps.

\begin{itemize}

\item In Section \ref{secstickgrain}, we show that stickiness leads to grain structure.  (This step is due to Katz-Laba-Tao,  \cite{KLT},  2001.)

\item In Section \ref{seccompconj}, we show that grain structure leads to complex conjugation structure.  (This step is based on unpublished work of Katz-Tao.   The details were carried out in \cite{WZ1}).

\item In Section \ref{seccompconjcontr}, we show that complex conjugation structure leads to a contradiction.  (This step is based on work of many people,  including Bourgain, Katz, Tao, Orponen, Shmerkin,  Wang, Zahl)

\end{itemize}

After we describe the proof of the sticky case we pause to digest and reflect.   

After that,  in Sections \ref{secworststicky} and \ref{secothershapes} and \ref{sechighdens}, we describe the proof that the worst case is sticky.  (This step is based on work of Wang and Zahl \cite{WZ}).

\section{Avoiding technicalities}

To avoid technical details, we will assume that various quantities are uniform.

For instance,  for each $T_\rho \in \TT_\rho$,  we will study the set of tubes $\TT[T_\rho]$.   In general,  for different tubes $T_\rho \in \TT_\rho$,  $|\TT[T_\rho]|$ could be very different.  But we will assume that all the sets $\TT[T_\rho]$ have roughly the same cardinality.   Similarly we will assume that $\mu(\TT[T_\rho])$ is roughly the same for all $T_\rho \in \TT_\rho$.

For each point $x \in U(\TT)$,  we let $\TT_x := \{ T \in \TT: x \in T \}$.   In general,  for different points $x \in U(\TT)$,  $|\TT_x|$ could be very different.   But we will assume that $|\TT_x|$ is roughly the same for all $x \in U(\TT)$.   Recall that we defined $\mu(\TT) = \frac{ \sum_{T \in \TT} |T| }{|U(\TT)|}$,  which we can interpret as the average size of $|\TT_x|$ over $x \in U(\TT)$.    Since we assume that $|\TT_x|$ is roughly constant,  we have

\begin{equation} \label{Txmu} |\TT_x| \approx \mu(\TT) \textrm{ for every } x \in U(\TT) \end{equation}

In this survey,  we sketch the proof of the Kakeya theorem assuming some uniformity of this kind.   The full proof has the same main ideas but there are extra technical details to deal with non-uniformity.   For instance,  if $|\TT_x|$ is very different at different $x \in U(\TT)$,  then we subdivide $U(\TT)$ into subsets where $|\TT_x|$ has different sizes.   Then we have to keep track of all these subsets.   This process is called pigeonholing.  There is non-trivial technical work involved,  but we will not discuss it in this survey of the high level ideas.

When we state lemmas in this survey,  the precise statements require some pigeonholing and/or uniformity hypotheses.   To keep the statements simple,  we leave out these details.   Unfortunately,  it means that the statements of the lemmas here are not completely precise.   But this choice does keep the statements of lemmas simpler.  We hope that it conveys the general strategy to a broader audience.



\section{Stickiness leads to grain structure} \label{secstickgrain}

In this section,  we will consider a worst-case sticky Kakeya set and show that it must have grain structure.

We say that $\TT$ is a sticky Kakeya set of $\delta$-tubes in $\RR^3$ if:

\begin{itemize}

\item $\Delta_{max}(\TT) \lessapprox 1$.

\item For each $\rho \in [\delta, 1]$,  $|\TT_\rho| \approx \rho^{-2}$ and for each $T_\rho \in \TT_{\rho}$, $ | \TT[T_\rho] | \approx (\rho / \delta)^{2}. $

\end{itemize}

We define $\bes$ to be the infimal exponent $\beta$ so that for every sticky Kakeya set of tubes

\begin{equation}
\mu(\TT) \lessapprox |\TT|^{\beta}.
\end{equation}

\begin{theorem} \label{thmsticky} (Sticky Kakeya theorem \cite{WZ1}) $\bes = 0$.   In other words,  for every sticky Kakeya set $\TT$,  $\mu(\TT) \lessapprox 1$.

\end{theorem}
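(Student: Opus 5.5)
We argue by contradiction, following the outline in the previous section. Suppose $\bes > 0$ and let $\TT$ be a worst-case sticky Kakeya set, so $\mu(\TT) \approx |\TT|^{\bes}$. The basic multiscale input comes from the sticky numerology. For each $\rho \in [\delta,1]$, both $\TT_\rho$ and the rescaled sets $\TT[T_\rho]$ are again sticky Kakeya sets; after rescaling $T_\rho$ to the unit ball by an anisotropic map, the $\delta$-tubes in $T_\rho$ become $(\delta/\rho)$-tubes. Applying the definition of $\bes$ to both gives
\[
\mu(\TT_\rho) \lessapprox \rho^{-2\bes}, \qquad \mu(\TT[T_\rho]) \lessapprox (\rho/\delta)^{2\bes}.
\]
Using $|\TT| \approx |\TT_\rho|\,|\TT[T_\rho]|$, the multiplicity of $\TT$ is at most roughly $\mu(\TT_\rho)\,\mu(\TT[T_\rho])$. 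For a worst-case set this product inequality must be nearly an equality at every scale. The near-equality says that inside a typical $\rho$-ball, the portion of $U(\TT)$ coming from the various fat tubes through that ball is as concentrated as possible.

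\textbf{Step 1 (grain structure).} Take $\rho = \sqrt{\delta}$. A $\delta$-tube inside a $\sqrt\delta$-ball looks like a $\delta$-tube of length $\sqrt\delta$. The tubes $\TT[T_\rho]$, after restriction to a $\sqrt\delta$-ball $B$, look like $\delta$-tubes parallel to the axis of $T_\rho$. So $U(\TT)\cap B$ is a union of ``planks'' $U(\TT[T_\rho]) \cap B$, one for each fat tube through $B$, with different directions. The near-equality of multiplicities forces these planks to overlap maximally. I would show, via a Cordoba/$L^2$-type incidence count of planks in a $\sqrt\delta$-ball (this is the Katz--\L aba--Tao argument), that maximal overlap is only possible if the cross-sections $U(\TT[T_\rho])\cap B$ are unions of $\delta$-slabs and all these slabs are parallel. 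This gives $U(\TT)\cap B = [0,\sqrt\delta]^2\times A$. Planarity of the tube directions through a point (``planiness'') should also follow, by applying Wolff's hairbrush bound to show the tubes through a point concentrate near a plane.

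\textbf{Step 2 (complex conjugation structure).} Fix a tube $T$ with core line the $x_1$-axis, and record the slope function $s(x_1)$ of the grain through each point. Now use a second tube $T'$ meeting $T$, and the consistency of grains at intersections and along nearby tubes. Combined with stickiness at all intermediate scales, this should yield an approximate identity: a discretized set $A\subset\RR$ (of dimension strictly between $0$ and $1$, since $\bes>0$ rules out both extremes) is approximately invariant under $a \mapsto a + s(x)x$ for a set of $x$ of positive dimension. In other words $A$ is approximately closed under addition and under multiplication by a nontrivial set of scalars, in analogy with \eqref{naivecompconj}.

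\textbf{Step 3 (contradiction).} Invoke a discretized sum-product / projection theorem (Bourgain's discretized ring theorem, or its Orponen--Shmerkin refinements). It says no $\delta$-discretized subset of $\RR$ of intermediate dimension can be approximately invariant under such affine maps with a positive-dimensional set of scalings. This contradicts $\bes>0$.

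I expect Step 2 to be the main obstacle. Extracting a genuine ring-like statement from the grain slopes requires controlling how the grains vary along a tube at every scale simultaneously, and doing so without losing the $\delta^{\epsilon}$ factors. My first attempt would be to rescale at an intermediate scale $\rho$ and apply the worst-case hypothesis to an auxiliary set of tubes built from $T$, $T'$ and the grains. This would show the slope map is approximately a bi-Lipschitz, additive-like map on the relevant sets, and then Step 3 could be applied.
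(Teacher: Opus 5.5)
Your overall architecture matches the paper's: a worst-case sticky set, near-equality in $\mu(\TT)\lessapprox\mu(\TT_\rho)\mu(\TT[T_\rho])$ giving perfect overlap, grains at scale $\sqrt\delta$, a conjugation-type identity for $A$, and then Bourgain's discretized sum-product theorem.

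Step 1 is essentially right, with one correction. Planiness cannot come from Wolff's hairbrush bound: that route needs the bound to be nearly tight, and it is not when $\bes<1/4$. In the paper, planiness falls out of perfect overlap itself. If fat tubes through $B$ had three non-coplanar directions, $U(\TT)\cap B$ would fill $B$, contradicting $|U(\TT)\cap B_\rho|\approx(\delta/\rho)^{2\bes}|B_\rho|$.

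The genuine gap is Step 2, which is where the content lies. The identity you aim for, $A+s(x)x\approx A$, is the naive Heisenberg identity $\RR+\bar z z=\RR$. Nothing in your sketch produces it, and it is not what the geometry gives. The missing idea is a loop through two scales.

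Each $\TT[T_{\sqrt\delta}]$ is itself a worst-case sticky set. So the rescaled grain lemma gives long grain boxes of dimensions $\delta^{3/4}\times\delta^{3/4}\times\delta^{1/4}$, filled with parallel long grains of dimensions $\delta\times\delta^{3/4}\times\delta^{1/4}$. Start at height $z\in A$ in a grain box $GB_1$ on $T_1$. Follow a long grain a distance $x$ to a grain box $GB_2$. Then follow a grain of $GB_2$ (slope $s(x)$) a distance $y$. Then follow a parallel long grain (slope $\tilde s(y)$) back to $GB_1$. You arrive at height $z+s(x)y+\tilde s(y)x$, which must lie in $A$.

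This gives $A+s(x)y+\tilde s(y)x\approx A$ for $x\in[0,\delta^{1/4}]$ and $y\in[0,\delta^{1/2}]$. It involves a second slope function along a transverse line. It uses only these two scales, so you do not need to control the grains at every scale at once. ``Consistency of grains along a second tube $T'$'' does not identify this mechanism.

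The hand-off to Step 3 also fails as written. A family of translations gives only additive information. Yet you assert that $A$ is closed under multiplication by a nontrivial set of scalars, and Step 3 needs affine maps with many scalings; nothing in Step 2 supplies this.

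In the paper, multiplicative structure comes from the bilinear dependence of $s(x)y+\tilde s(y)x$ on $(x,y)$. It appears only after ruling out degenerate slope functions for which these translations vanish identically, such as $s\equiv\tilde s\equiv 0$, or $s(x)=x$ with $\tilde s(y)=-y$. In the latter case your identity would claim $A+x^2\approx A$, which the loop does not give. Ruling these cases out uses $\Delta_{max}(\TT)\lessapprox 1$; for instance, $s\equiv 0$ forces too many tubes into a planar slab. Your proposal never invokes the convex Wolff axioms at this stage.

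Finally, Bourgain's theorem needs non-concentration at every scale, $|A\cap I_\rho|_\delta\approx(\rho/\delta)^{1-2\bes}$. The paper obtains this by applying the worst-case property to every $\TT_\rho$. ``Dimension strictly between $0$ and $1$'' is not enough. Also, $1-2\bes>0$ comes from $\bes\le 1/4$, not from $\bes>0$.
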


The sticky Kakeya theorem is the the first big part of the proof of the Kakeya theorem.

We say that $\TT$ is a worst-case sticky Kakeya set if $\TT$ is a sticky Kakeya set and  $\mu(\TT) \approx |\TT|^{\bes}$.   The proof of the sticky Kakeya theorem goes by contradiction.   We suppose that $\bes > 0$ and we let $\TT$ be a worst-case sticky Kakeya set.   By examining $\TT$ at different scales,  we will see that it must have a great deal of structure.

\subsection{Perfect overlap} \label{subsecperfect}

 If $\TT$ is a sticky Kakeya set,  then it follows that for each $\rho \in [\delta, 1]$,  $\TT_\rho$ and $\TT[T_\rho]$ are also sticky Kakeya sets.   This fact makes sticky Kakeya sets well suited for multi-scale induction arguments.


By the definition of $\bes$ (or by induction on scales),  we know that 

\begin{equation} \label{multTrho} \mu(\TT_\rho) \lessapprox |\TT_\rho|^{\bes}. \end{equation}

\begin{equation} \label{multTTrho}  \mu(\TT[T_\rho]) \lessapprox |\TT[T_\rho]|^{\bes}. \end{equation}

Now we state a fundamental lemma relating $\mu(\TT)$ with $\mu(\TT_\rho)$ and $\mu(\TT[T_\rho])$.  

\begin{lemma} \label{lemmultmu} $\mu(\TT) \lessapprox \mu(\TT_\rho) \mu(\TT[T_\rho])$.
\end{lemma}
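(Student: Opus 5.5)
We will prove the lemma by a point-counting argument, using the uniformity assumptions of the previous section. By \eqref{Txmu} it suffices to bound $|\TT_x|$ at a typical point $x \in U(\TT)$. We apply the same uniformity to $\TT_\rho$ and to each $\TT[T_\rho]$. Thus a typical point of $U(\TT_\rho)$ lies in $\approx \mu(\TT_\rho)$ tubes of $\TT_\rho$, and a typical point of $U(\TT[T_\rho])$ lies in $\approx \mu(\TT[T_\rho])$ tubes of $\TT[T_\rho]$.

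\textbf{Decomposition.} Fix $x \in U(\TT)$. By \eqref{T=UTr}, every $T \in \TT_x$ belongs to exactly one $\TT[T_\rho]$, and then $x \in T \subset T_\rho$. Hence
$$ \TT_x = \bigsqcup_{T_\rho \in (\TT_\rho)_x} \TT[T_\rho]_x, \qquad |\TT_x| = \sum_{T_\rho \ni x} |\TT[T_\rho]_x|. $$
The number of terms in the sum is $|(\TT_\rho)_x|$. Since $x \in U(\TT) \subset U(\TT_\rho)$, this is $\approx \mu(\TT_\rho)$ at a typical point.

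\textbf{Bounding each term.} Each summand $|\TT[T_\rho]_x|$ is the multiplicity of $\TT[T_\rho]$ at a point $x$ of $U(\TT[T_\rho])$. We want it to be $\lessapprox \mu(\TT[T_\rho])$. Combining the two bounds then gives $|\TT_x| \lessapprox \mu(\TT_\rho)\,\mu(\TT[T_\rho])$, and averaging over $x$ gives the lemma.

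\textbf{Main obstacle.} The difficulty is that $x$ being typical for $\TT$ need not make it typical for each $\TT[T_\rho]$ separately: the multiplicity function of $\TT[T_\rho]$ could be concentrated exactly at the points we care about. A cleaner way to handle this is to count volumes instead of multiplicities at one point. We have
$$ |U(\TT)| = \Big|\bigcup_{T_\rho} U(\TT[T_\rho])\Big| \gtrsim \frac{1}{\mu(\TT_\rho)} \sum_{T_\rho} |U(\TT[T_\rho])|, $$
because each point of $U(\TT_\rho)$ lies in $\approx \mu(\TT_\rho)$ of the sets $T_\rho$, and $U(\TT[T_\rho]) \subset T_\rho$. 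This step needs the multiplicity of $\TT_\rho$ to be roughly constant on the subsets $U(\TT[T_\rho])$, which is exactly the kind of uniformity assumption made in the previous section. For example, we can refine $\TT$ so that points of $U(\TT)$ see $\approx \mu(\TT_\rho)$ fat tubes.

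Next, $|U(\TT[T_\rho])| = \sum_{T \in \TT[T_\rho]} |T| / \mu(\TT[T_\rho])$ by definition. Summing over $T_\rho$ and using \eqref{T=UTr} gives
$$ |U(\TT)| \gtrsim \frac{\sum_{T\in\TT}|T|}{\mu(\TT_\rho)\,\mu(\TT[T_\rho])}, $$
which is exactly $\mu(\TT) \lessapprox \mu(\TT_\rho)\mu(\TT[T_\rho])$.

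The only genuinely delicate point is the overlap count in the first inequality: the sets $U(\TT[T_\rho])$ overlap at most as often as the fat tubes $T_\rho$ do. This is essentially automatic, since $U(\TT[T_\rho]) \subset T_\rho$. It gives an upper bound on the overlap pointwise by $|(\TT_\rho)_x|$. Its average over $U(\TT)$ is at most $\mu(\TT_\rho)$ up to the pigeonholing loss, which is absorbed into $\lessapprox$.
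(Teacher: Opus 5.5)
Your argument is correct at the paper's level of rigor, but it takes a slightly different route. Your pointwise decomposition is exactly the paper's proof: at $x$ there are $\approx \mu(\TT_\rho)$ fat tubes, and each contributes at most $\mu(\TT[T_\rho])$ thin tubes.

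The paper does not treat your ``main obstacle'' as an obstacle. Its standing uniformity assumption \eqref{Txmu} is applied to every family in play, including each $\TT[T_\rho]$. So every point of $U(\TT[T_\rho])$ has multiplicity $\approx \mu(\TT[T_\rho])$, and the second pointwise bound is immediate.

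Your volume count is the same argument integrated over $x$. What it buys is that it uses only the definition of $\mu(\TT[T_\rho])$, which gives the exact inequality
\[ \mu(\TT) \le \max_{T_\rho} \mu(\TT[T_\rho]) \cdot \frac{1}{|U(\TT)|} \int_{U(\TT)} |(\TT_\rho)_x| \, dx . \]
No pointwise uniformity for the thin tubes is needed, and all the uniformity is concentrated in one statement about $\TT_\rho$.

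Be careful with your last sentence, though. The claim that the fat-tube multiplicity averaged over $U(\TT)$ is $\lessapprox \mu(\TT_\rho)$ is not a loss that is automatically absorbed into $\lessapprox$. $\mu(\TT_\rho)$ is an average over the larger set $U(\TT_\rho)$, and $U(\TT)$ can sit where the fat tubes pile up. For instance, let $\TT$ be a bush of $s^2\delta^{-2}$ tubes through the origin with $\delta/s \le \rho \le s$. Then the fat-tube multiplicity at radius $t$ is $\sim t^{-2}$, its average over $U(\TT)$ is $\sim s^{-1}$, and $\mu(\TT_\rho) \sim 1$. This is a power of $\delta$ loss if $s$ is a power of $\delta$.

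So this step is a genuine uniformity hypothesis on $\TT_\rho$. It is the same one the paper uses in the first sentence of its proof, and both arguments rest on it.
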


The following picture illustrates the proof:

\includegraphics[scale=1]{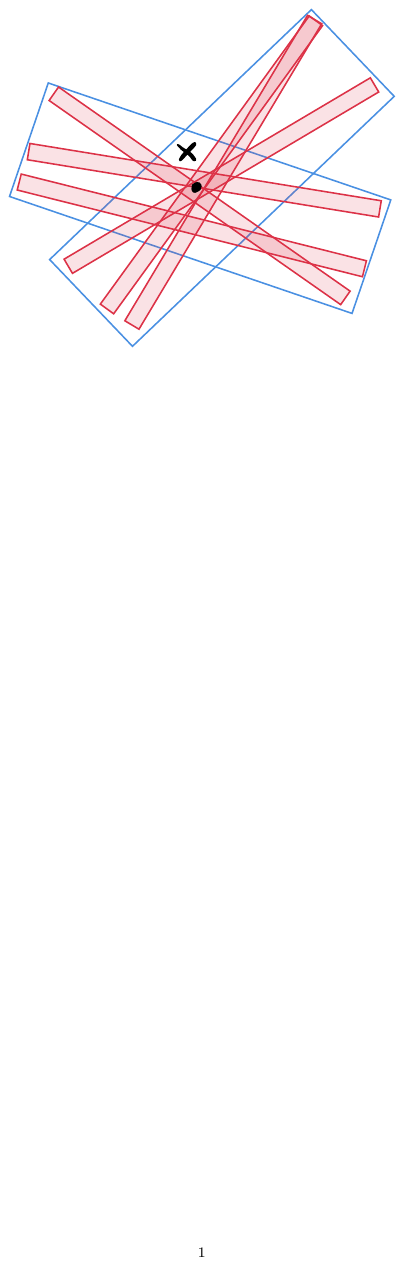}

\begin{proof} [Proof of Lemma \ref{lemmultmu}]
Consider a point $x \in U(\TT)$.    The point $x$ belongs to $T_\rho$ for $\approx \mu(\TT_\rho)$ fat tubes $T_\rho \in \TT_\rho$.   For each of these $T_\rho$,  the point belongs to at most $\mu(\TT[T_\rho])$ thin tubes $T \in \TT[T_\rho]$.  So all together,  $x$ belongs to $\lessapprox \mu(\TT_\rho) \mu(\TT[T_\rho])$ tubes $T \in \TT$.  
\end{proof}

If $\TT$ is a worst-case sticky Kakeya set,  then $|\TT|^{\bes} \approx \mu(\TT)$.   Now combining Lemma \ref{lemmultmu} with our bounds for $\mu(\TT_\rho)$ and $\mu(\TT[T_\rho])$ we see that

\begin{equation} \label{stringeq} |\TT|^{\bes} \approx \mu(\TT) \lessapprox \mu(\TT_\rho) \mu(\TT[T_\rho]) \lessapprox |\TT_\rho|^{\bes} |\TT[T_\rho]|^{\bes} \approx |\TT|^{\bes}.  \end{equation}

\noindent Therefore,  all the inequalities in the above string must be roughly equalities.   This has two important consequences.   Whenever $\TT$ is a worst-case sticky Kakeya set,  we see that

\begin{enumerate}

\item  $\mu(\TT_\rho) \approx |\TT_\rho|^{\bes}$ and $\mu(\TT[T_\rho]) \approx |\TT[T_\rho]|^{\bes}$.   Therefore,  both $\TT_\rho$ and $\TT[T_\rho]$ are worst-case sticky Kakeya sets.

\item  $\mu(\TT) \approx \mu(\TT_\rho) \mu(\TT[T_\rho])$,  so we have equality in Lemma \ref{lemmultmu}.   

\end{enumerate}

To digest this second fact,  we return to the picture illustrating Lemma \ref{lemmultmu} and add a couple more points.

    \begin{figure} [h!]
      \begin{center}
\includegraphics[scale=1]{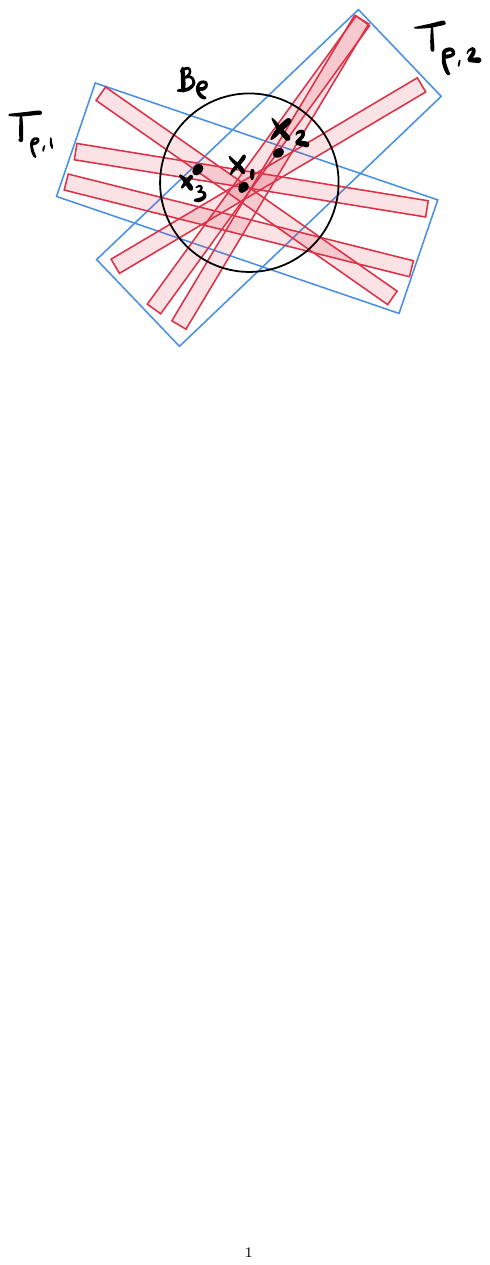}       
      \end{center}
      \caption{When is Lemma \ref{lemmultmu} sharp?}\label{fig:multprodpic}
    \end{figure}

Recall that $\TT_x = \{ T \in \TT: x \in T \}$.    We also define

$$\TT_{\rho,  B_\rho} = \{ T_\rho \in \TT_\rho: B_\rho \cap T_\rho \textrm{ is non-empty} \}. $$

\noindent  The picture shows three points $x_1, x_2, x_3$ all lying in a common $B_\rho$.   In our picture,  $\TT_{\rho, B_\rho}$ is the set of thick blue tubes.    The point $x_1$ lies in $U(\TT[T_\rho])$ for every $T_\rho \in \TT_{\rho, B_\rho }$,  and so

$$ |\TT_{x_1}| \approx |\TT_{\rho,  B_\rho}| |\TT[T_\rho]| \approx \mu(\TT_\rho) \TT[T_\rho]. $$

\noindent For the point $x_1$,  Lemma \ref{lemmultmu} would be an equality.

\noindent But $x_2$ and $x_3$ behave differently.   The point $x_2$ lies in $U(\TT[T_{\rho,2}])$ but not in $U(\TT[T_{\rho,1}])$.   Let us imagine that $x_2$ lies in $U(\TT[T_\rho])$ for only a small fraction of $T_\rho \in \TT_{\rho,B_\rho}$.   Then we would have 

$$ |\TT_{x_2}| \ll \mu(\TT_\rho) \mu(\TT[T_\rho]). $$

If Lemma \ref{lemmultmu} is roughly an equality,  then most points $x \in U(\TT)$ must resemble $x_1$.   So for each $T_\rho \in \TT_{\rho,  B_\rho}$,  $U(\TT_\rho) \cap B_\rho$ must be essentially the same as $U(\TT) \cap B_\rho$.   We call this property the perfect overlap property.

\begin{lemma} (Perfect overlap property) If $\TT$ is a worst-case sticky Kakeya set,  and $B_\rho \subset U(\TT_\rho)$,  then for each $T_\rho \in \TT_{\rho,  B_\rho}$,  

$$ |U(\TT[T_\rho]) \cap B_\rho |  \approx |U(\TT) \cap B_\rho|. $$

Morally,  the sets $U(\TT[T_\rho]) \cap B_\rho$ are all the same as $T_\rho$ varies in $\TT_{\rho,  B_\rho}$.  

\end{lemma}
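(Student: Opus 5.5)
We prove the perfect overlap property by double counting incidences in a fixed ball $B_\rho \subset U(\TT_\rho)$, using the near-equality $\mu(\TT) \approx \mu(\TT_\rho)\mu(\TT[T_\rho])$ from \eqref{stringeq}. Throughout we use the uniformity assumptions of Section 4. Namely, $|\TT_x| \approx \mu(\TT)$ for $x \in U(\TT)$, and $|\TT_{\rho,B_\rho}| \approx \mu(\TT_\rho)$. Also, for each $T_\rho$ and each point $x \in U(\TT[T_\rho])$, the number of $T \in \TT[T_\rho]$ containing $x$ is $\lessapprox \mu(\TT[T_\rho])$; this is the uniformity of $\TT[T_\rho]$ applied to its own multiplicity.

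\emph{Step 1: count incidences in two ways.} Let $I = \sum_{T \in \TT} |T \cap B_\rho|$. We may assume $B_\rho$ is a typical ball, so that $B_\rho \cap U(\TT)$ is nonempty. Counting point by point,
$$ I = \int_{U(\TT)\cap B_\rho} |\TT_x|\,dx \approx \mu(\TT)\, |U(\TT)\cap B_\rho|. $$
Grouping the tubes instead by their fat tube $T_\rho \in \TT_{\rho,B_\rho}$, which is allowed by \eqref{T=UTr}, gives
$$ I = \sum_{T_\rho \in \TT_{\rho,B_\rho}} \int_{U(\TT[T_\rho])\cap B_\rho} |\TT[T_\rho]_x|\,dx \lessapprox \mu(\TT[T_\rho]) \sum_{T_\rho \in \TT_{\rho,B_\rho}} |U(\TT[T_\rho])\cap B_\rho|. $$

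\emph{Step 2: compare.} Each set $U(\TT[T_\rho])\cap B_\rho$ is contained in $U(\TT)\cap B_\rho$. So the last sum is at most $|\TT_{\rho,B_\rho}|\cdot|U(\TT)\cap B_\rho| \approx \mu(\TT_\rho)\,|U(\TT)\cap B_\rho|$, which recovers Lemma \ref{lemmultmu}. Now insert $\mu(\TT) \approx \mu(\TT_\rho)\mu(\TT[T_\rho])$. The two expressions for $I$ then force
$$ \sum_{T_\rho \in \TT_{\rho,B_\rho}} |U(\TT[T_\rho])\cap B_\rho| \gtrapprox |\TT_{\rho,B_\rho}|\, |U(\TT)\cap B_\rho|. $$
Every summand is at most $|U(\TT)\cap B_\rho|$. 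Hence all but a small fraction of the summands, measured in $\approx$-loss, satisfy $|U(\TT[T_\rho])\cap B_\rho| \approx |U(\TT)\cap B_\rho|$.

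\emph{Step 3: from ``most'' to ``each''.} Here uniformity and pigeonholing do the work. We refine $\TT$, discarding a small fraction of pairs $(B_\rho,T_\rho)$ and the thin tubes through them, so that the conclusion holds for every remaining pair. We then need to check that the refined family is still a worst-case sticky Kakeya set, and this is what lets us assume the pair condition holds for each $T_\rho$. I expect this to be the main obstacle. The averaging argument only gives the conclusion for most pairs, and the $\approx$ losses accumulate. The statement is therefore only true after this refinement, consistent with the paper's remark that lemmas hold modulo pigeonholing. The "morally the same set" conclusion follows from the same step. Each $U(\TT[T_\rho])\cap B_\rho$ is a subset of $U(\TT)\cap B_\rho$ of comparable measure, so any two of them overlap in a set of comparable measure.
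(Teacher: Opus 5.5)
Your proposal is correct and is essentially the paper's argument. The paper reads the near-equality $\mu(\TT)\approx\mu(\TT_\rho)\mu(\TT[T_\rho])$ from \eqref{stringeq} pointwise: a typical $x\in U(\TT)\cap B_\rho$ must behave like $x_1$, lying in $U(\TT[T_\rho])$ for most $T_\rho\in\TT_{\rho,B_\rho}$. Your incidence count over $B_\rho$ is the same double counting with the order of summation swapped, and the passage from ``most'' to ``each'' is absorbed into the survey's uniformity conventions, just as you indicate.

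One small caution: with $\approx$-losses, averaging only yields a $\gtrapprox 1$ fraction of good $T_\rho$, not ``all but a small fraction.'' Also, two subsets of $U(\TT)\cap B_\rho$ of merely comparable measure need not intersect. So the ``morally the same'' gloss should instead rest on the pointwise fact that a typical $x$ lies in $U(\TT[T_\rho])$ for a $\gtrapprox 1$ fraction of $T_\rho$, which gives large overlap for typical pairs.
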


The perfect overlap property is a very strong condition.   If you look back at Figure \ref{fig:multprodpic},  the tubes in the picture fail the perfect overlap property: most points in $U(\TT) \cap B_\rho$ are like $x_2$ or $x_3$ and only a few are like $x_1$.   Recall that we supposed that $\bes > 0$,  and so $|U(\TT)| \ll 1$.   It's not hard to see that since $\TT$ is a worst-case sticky Kakeya set,  $|U(\TT) \cap B_\rho| \ll |B_\rho|$ (and we will prove this in Section \ref{subsecfrac}).
So $U(\TT[T_\rho]) \cap B_\rho$ are all small subsets of $B_\rho$.  There are many different $T_\rho \in \TT_{\rho, B_\rho}$,  so we have many small subsets $U(\TT[T_\rho]) \cap B_\rho$.   According to the perfect overlap property,  all of these small subsets coincide almost exactly.  This is a strong condition and it gives a lot of information about the Kakeya set.

\subsection{Grain structure} \label{subsecgrain}


The perfect overlap property is easier to analyze when $\rho = \sqrt{\delta}$ because of the following property.

\begin{lemma} \label{lempartubes} If $T_1, T_2 \in \TT[T_{\sqrt{\delta}}]$ and $T_1, T_2$ both intersect $B_{\sqrt{\delta}}$,  then $T_1 \cap B_{\sqrt{\delta}}$ and $T_2 \cap B_{\sqrt{\delta}}$ are essentially parallel tubes of length $\sqrt{\delta}$ and radius $\delta$.   More precisely,  there are parallel tubes $S_1, S_2$ with length $\sqrt{\delta}$ and radius $2 \delta$ so that $T_j \cap B_{\sqrt{\delta}} \subset S_j$.
\end{lemma}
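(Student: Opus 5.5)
The plan is to reduce the statement to elementary Euclidean geometry of lines. Write $\ell_1, \ell_2$ for the core lines of $T_1, T_2$ and $\ell$ for the core line of $T_{\sqrt{\delta}}$. The containment $T_j \subset T_{\sqrt{\delta}}$ forces $\ell_j$ to stay within distance $\lesssim \sqrt{\delta}$ of $\ell$ along a segment of length $\sim 1$. Two unit segments that stay within $\sqrt{\delta}$ of each other along their whole length make an angle $\lesssim \sqrt{\delta}$. So the first step is to record
$$ \angle(\ell_1, \ell) \lesssim \sqrt{\delta}, \qquad \angle(\ell_2,\ell) \lesssim \sqrt{\delta}, $$
and hence $\angle(\ell_1,\ell_2) \lesssim \sqrt{\delta}$.

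The second step localizes to the ball. Over a segment of length $\sqrt{\delta}$ inside $B_{\sqrt{\delta}}$, a direction change of angle $\theta \lesssim \sqrt{\delta}$ moves the line by at most $\theta \cdot \sqrt{\delta} \lesssim \delta$. Concretely, let $v$ be the direction of $\ell_1$ and $p_2$ a point of $\ell_2 \cap B_{2\sqrt{\delta}}$ (which exists since $T_2$ meets the ball). Let $\ell_2'$ be the line through $p_2$ with direction $v$. Then every point of $\ell_2 \cap B_{2\sqrt{\delta}}$ lies within $|v - v_2| \cdot 4\sqrt{\delta} \lesssim \delta$ of $\ell_2'$. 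Setting $S_1 = N_{2\delta}(\ell_1) \cap B_{2\sqrt{\delta}}$ and $S_2 = N_{2\delta}(\ell_2')\cap B_{2\sqrt{\delta}}$ gives parallel tubes of length $\sim\sqrt{\delta}$. Since $T_j \cap B_{\sqrt{\delta}}$ lies within $\delta$ of $\ell_j \cap B_{2\sqrt\delta}$, we get $T_j \cap B_{\sqrt{\delta}} \subset S_j$, provided the loss $\lesssim \delta$ from the angle fits inside the extra $\delta$ of width.

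The only genuine point, and the main obstacle, is the constant in the angle bound. We need $|v-v_2|\cdot 2\sqrt\delta \le \delta$ roughly, so the angle must be at most about $\tfrac12\sqrt\delta$. This holds with the right normalization: tubes of length $1$ contained in a tube of radius $\sqrt{\delta}$ can tilt by at most $\sim 2\sqrt{\delta}$. Honestly the statement is only true up to absolute constants, so I would either replace ``radius $2\delta$'' by ``radius $C\delta$'' (the form actually used), or normalize $T_{\sqrt\delta}$ to have radius $c\sqrt\delta$ for small $c$. The angle estimate itself is proved by taking the two endpoints of $\ell_j$, at distance $\ge 1/2$ apart, each within $\sqrt\delta$ of $\ell$, and dividing the transverse displacement by the length.
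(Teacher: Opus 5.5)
Your proposal is correct and is the intended elementary reason; the survey gives no written proof, only Figure~\ref{fig:partubes}. The reason is that containment in $T_{\sqrt{\delta}}$ forces the core lines of $T_1, T_2$ to be within angle $O(\sqrt{\delta})$ of each other, and over a segment of length $O(\sqrt{\delta})$ such an angle produces only $O(\delta)$ of transverse displacement.

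Your caveat about constants is right and is not just pedantry. Two thin tubes crossing diagonally inside a fat tube of radius $\sqrt{\delta}$ can have relative angle close to $4\sqrt{\delta}$. Also, $T_j \cap B_{\sqrt{\delta}}$ can have length $2\sqrt{\delta}$. So the literal ``radius $2\delta$, length $\sqrt{\delta}$'' fails, and the lemma should be read with radius $C\delta$ and length $\sim\sqrt{\delta}$, in line with the survey's stated convention that its lemmas are not completely precise.
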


We illustrate the situation in Figure \ref{fig:partubes}

    \begin{figure} [h!]
      \begin{center}
\includegraphics[scale=.7]{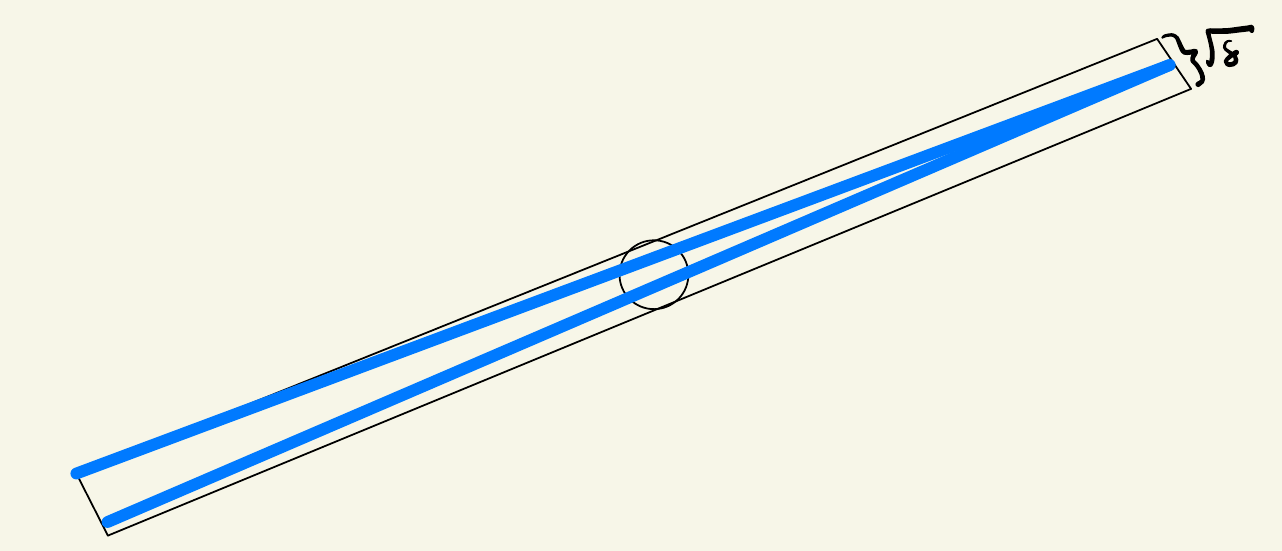}       
      \end{center}
      \caption{Illustration of Lemma \ref{lempartubes}}\label{fig:partubes}
    \end{figure}

It is quite difficult to achieve perfect overlap when $|U(\TT[T_{\sqrt{\delta}}]) \cap B_{\sqrt{\delta}}| \ll |B_{\sqrt{\delta}}|$.   In two dimensions,  this can only happen in the special case when the tubes thru $B_{\sqrt{\delta}}$ all lie in a small angular sector,  as in the following picture.


      \begin{center}
\includegraphics[scale=.7]{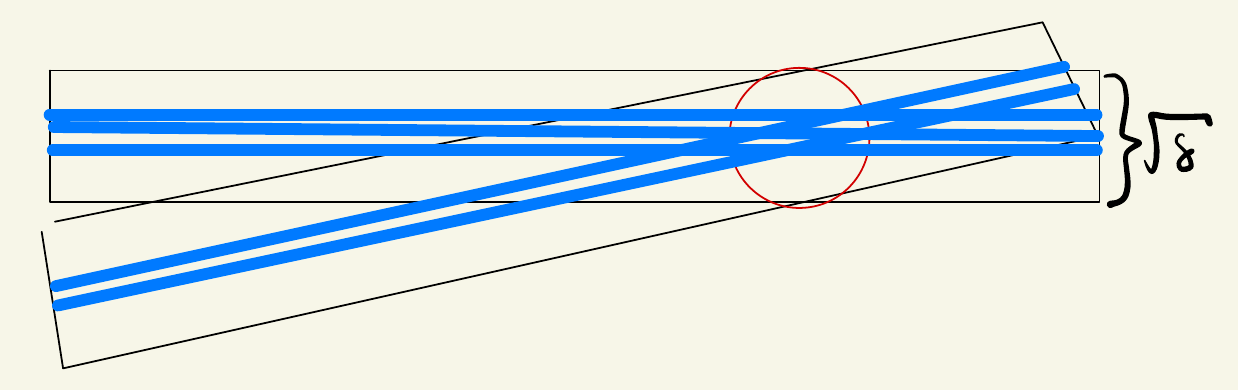}       
      \end{center}

In two dimensions,  if the fat tubes $T_{\sqrt{\delta}}$ through $B_{\sqrt{\delta}}$ are transverse,  then the perfect overlap property implies that $U(\TT)$ fills $B_\rho$.   We state this result as a lemma.

\begin{lemma} \label{lembilin} In two dimensions suppose that $\rho = \sqrt{\delta}$ and 

\begin{itemize}

\item   $T_{\rho, 1}$ and $T_{\rho, 2}$ pass through $B_\rho$.

\item The tubes $T_{\rho, 1}$ and $T_{\rho_2}$ are transverse:  the angle between them is $\sim 1$.

\item $U(\TT[T_{\rho, 1}]) \cap B_\rho = U(\TT[T_{\rho,2}]) \cap B_\rho$,  and these sets are non-empty.

\end{itemize}

Then $| U(\TT[T_{\rho,1}]) ) \cap B_\rho| \approx |B_\rho|$.

\end{lemma}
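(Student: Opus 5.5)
The plan is to use the two-dimensional analogue of Lemma \ref{lempartubes}. Inside the ball $B_\rho$ with $\rho = \sqrt{\delta}$, every $\delta$-tube of $\TT[T_{\rho,1}]$ looks like a $\delta \times \sqrt{\delta}$ rectangle. All of these rectangles are essentially parallel to the direction $e_1$ of $T_{\rho,1}$.

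Here is why. A $\delta$-tube $T \subset T_{\rho,1}$ of length $1$ makes an angle $\lesssim \rho$ with the core of $T_{\rho,1}$. Over a segment of length $\sqrt{\delta}$, an angular deviation $\lesssim \sqrt{\delta}$ moves the tube by only $\lesssim \delta$. So $T \cap B_\rho$ lies in a $2\delta \times \sqrt{\delta}$ rectangle parallel to $e_1$.

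Consequently, $X_1 := U(\TT[T_{\rho,1}]) \cap B_\rho$ is, up to doubling the width, a union of full-length strips in direction $e_1$. In coordinates $(s,t)$ adapted to $e_1$ (with $s$ along $e_1$), this reads $X_1 \approx \{ (s,t) : t \in A_1 \}$ for some union $A_1 \subset [0,\sqrt{\delta}]$ of $\delta$-intervals. Likewise $X_2 \approx \{ y : \langle y, e_2^\perp\rangle \in A_2 \}$, a union of strips in the direction $e_2$ of $T_{\rho,2}$.

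The core of the argument is the hypothesis $X_1 = X_2 =: X$, which says $X$ is invariant under translation along both directions (within $B_\rho$, at resolution $\delta$). I would run a short propagation argument.

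1. Pick a point $x \in X$, which is possible by non-emptiness.
2. Since $X$ is a union of $e_1$-strips, it contains the full chord of $B_\rho$ through $x$ in direction $e_1$, thickened by $\delta$.
3. For each point $y$ on this chord, $y \in X = X_2$, so $X$ contains the whole $e_2$-chord through $y$.
4. Because $\angle(e_1,e_2) \sim 1$, the union of the $e_2$-chords through the $e_1$-chord covers a fixed fraction of $B_\rho$. For example, if $x$ is near the center, a parallelogram of side $\sim \rho$ around it is covered.

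This gives $|X| \gtrsim |B_\rho|$. The upper bound $|X| \le |B_\rho|$ is trivial.

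The step needing care is that $X_1 = X_2$ holds only approximately, as in the perfect overlap lemma, and that $x$ might lie near the boundary of $B_\rho$. The boundary issue is handled by passing to the concentric ball $B_{\rho/2}$, or by choosing $x$ in a typical position. For the approximate equality, I would replace the pointwise propagation by a measure version. Writing $X \approx \mathbb{1}_{A_1}(t)$ in $e_1$-coordinates and $X \approx \mathbb{1}_{A_2}(\langle y, e_2^\perp \rangle)$ in $e_2$-coordinates, the two coordinate functions $t$ and $\langle y, e_2^\perp\rangle$ are independent linear forms on $B_\rho$ (this is transversality). Each level set of one form therefore meets every level set of the other inside $B_\rho$, at least for a constant fraction of the pairs, when the forms are restricted to a slightly smaller ball.

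So if $X = \{t \in A_1\}$ essentially equals $\{\langle y, e_2^\perp\rangle \in A_2\}$, Fubini shows that $A_1 \times A_2$, pulled back by the linear change of variables, nearly equals $A_1 \times [0,\sqrt\delta]$ and nearly equals $[0,\sqrt\delta] \times A_2$. This forces $|A_1| \approx \sqrt{\delta}$ and $|A_2| \approx \sqrt\delta$, hence $|X| \approx |B_\rho|$. The Fubini/product-set formulation is the step I expect to require the most bookkeeping, but it is routine once the strip structure from the first step is in place.
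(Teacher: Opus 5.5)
Your steps 1--4 are essentially the paper's own proof sketch: take a thin tube of $\TT[T_{\rho,1}]$ through a point of $X$, observe that every point of its chord in $B_\rho$ lies in some tube of $\TT[T_{\rho,2}]$, and use that these tubes are parallel to one another and transverse to the first, so they fill a definite fraction of $B_\rho$. The Fubini/product-set refinement for approximate overlap is not needed here, since this lemma assumes exact equality. For the boundary issue, shrinking to $B_{\rho/2}$ does not work, because it presupposes $X\cap B_{\rho/2}\neq\emptyset$; instead note that, by transversality, at least one of the two chords through any $x\in X$ has length $\gtrsim\rho$, so propagating along that chord first makes Step 4 go through.
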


\begin{proof} [Proof sketch] Let $T_1 \in \TT[T_{\rho,1}]$ be a tube that intersects $B_\rho$.    By hypothesis,  each point $x \in T_1 \cap B_\rho$ lies in a tube $T_{2,x} \in \TT[T_{\rho,2}]$.   These tubes are all parallel to each other,  and they are roughly perpendicular to $T_1$,  and so they fill a definite fraction of $B_\rho$.
\end{proof}  


In three dimensions,  there is a more interesting example that satisfies the perfect overlap property.

{\bf Grain example}  $B = B(0, \sqrt{\delta}) \subset \RR^3$.   

\begin{itemize}

\item $G \subset B$ is a $\delta \times \sqrt{\delta} \times \sqrt{\delta}$ slab parallel to the $(x_1,x_2)$-plane.

\item The tubes of $\TT_{\sqrt{\delta}, B}$ are parallel to the $(x_1,x_2)$-plane.

\item For each $T_{\sqrt{\delta}} \subset \TT_{\sqrt{\delta}}$,  $U(\TT[T_{\sqrt{\delta}}]) \cap B = G$.   

\end{itemize}

Figure \ref{fig:perfectoverlapgrain} illustrates the situation.  The picture can only show some of the tubes.   The tubes running in the $x_1$ direction should fill the slab $G$.  These tubes all lie in a single $T_{\sqrt{\delta}}$ running in the $x_1$ direction.   Similarly,  the tubes running in the $x_2$ direction should fill the slab,  and those tubes all lie in a single $T_{\sqrt{\delta}}$ running in the $x_2$ direction.   We could also add other tubes in any direction parallel to the $(x_1,x_2)$-plane.

    \begin{figure} [h!]
      \begin{center}
\includegraphics[scale=.7]{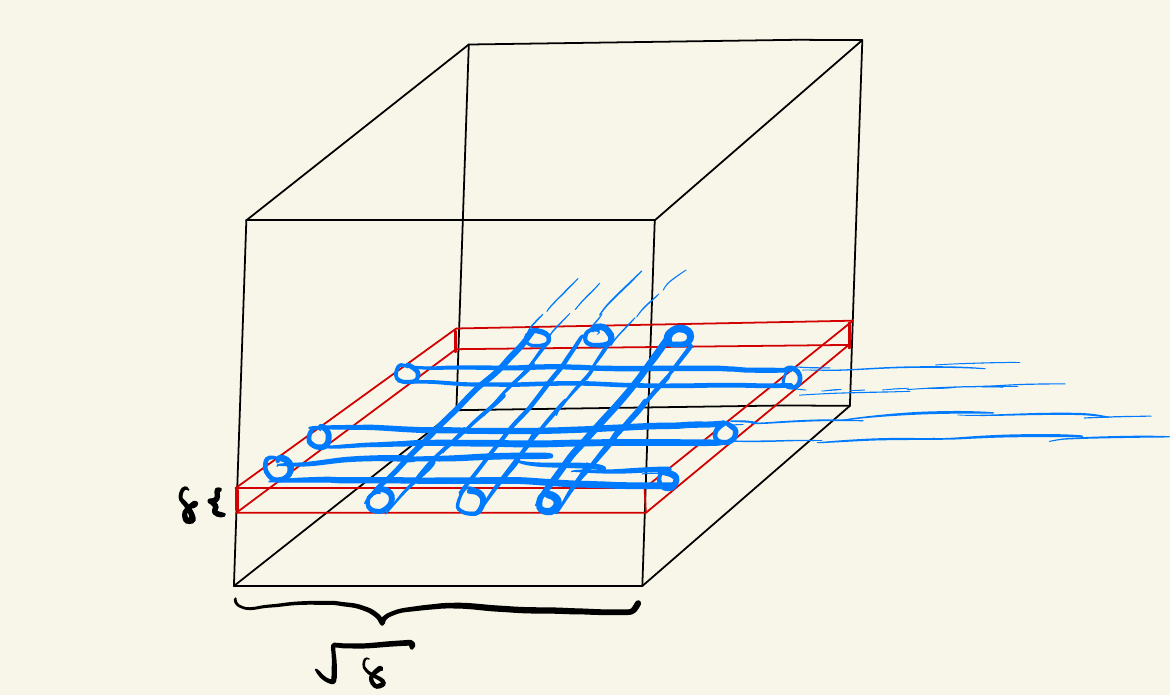}       
      \end{center}
      \caption{Perfect overlap of tubes in a grain}\label{fig:perfectoverlapgrain}
    \end{figure}




In three dimensions,  any example satisfying the perfect overlap property must either be a union of grains or else have all the fat tubes lie in a small angular sector.   It is not hard to check that a worst-case sticky Kakeya set cannot have tubes in such a small angular sector.   This leads to the following grain structure lemma.

\begin{lemma} (Grain structure) Suppose that $\bes > 0$ and that $\TT$ is a worst-case sticky Kakeya set.   Then for a typical ball $B = B_{\sqrt{\delta}} \subset U(\TT_{\sqrt{\delta}})$,  we can choose coordinates so that

\begin{itemize}

\item $U(\TT) \cap B$ is a union of slabs $G$ as in the example above.   Therefore,  $U(\TT) \cap B$ has the form

$$ U(\TT) \cap B = [0, \sqrt{\delta}]^2 \times A,  $$

\noindent where $A \subset [0, \sqrt{\delta}]$.   

\item The tubes of $\TT_{\sqrt{\delta}, B}$ are parallel to the $(x_1,x_2)$-plane.

\end{itemize}

\end{lemma}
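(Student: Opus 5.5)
We work at scale $\rho = \sqrt{\delta}$ and start from the Perfect overlap property for a typical ball $B = B_{\sqrt{\delta}} \subset U(\TT_{\sqrt{\delta}})$. It says that for every $T_\rho \in \TT_{\rho,B}$ the sets $U(\TT[T_\rho]) \cap B$ all essentially coincide with $E := U(\TT) \cap B$. By Lemma \ref{lempartubes}, for each fixed $T_\rho$ the set $U(\TT[T_\rho]) \cap B$ is a union of essentially parallel $\delta \times \delta \times \sqrt{\delta}$ tubes in the direction $v(T_\rho)$. So $E$ is, up to $\approx$ losses, invariant under translation along each direction in $V := \{ v(T_\rho) : T_\rho \in \TT_{\rho,B}\}$, at resolution $\delta$ inside $B$.

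Rescaling $B$ by $\delta^{-1/2}$ turns this into a statement about a unit ball. There $E$ becomes a union of $\sqrt{\delta}$-tubes that is simultaneously a union of full-length segments in every direction of $V$. The plan is to split into cases according to the geometry of $V$, viewed as a set of $\sqrt{\delta}$-separated directions in $S^2$.

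\emph{Case A (three transverse directions).} Suppose $V$ contains three directions spanning $\RR^3$ with angles $\gtrsim 1$. Iterate the argument of Lemma \ref{lembilin}. Starting from one tube $T_1$ in direction $v_1$, every point of $T_1 \cap B$ lies on a tube in direction $v_2$, which already fills a slab spanned by $v_1, v_2$. Every point of that slab lies on a tube in direction $v_3$, so $E$ fills a definite fraction of $B$, i.e.\ $|E| \approx |B|$. We then rule this out by the fact (promised in Section \ref{subsecfrac}, and which I would prove here if necessary) that $|U(\TT) \cap B_\rho| \ll |B_\rho|$ when $\bes>0$. That proof should go via equality in \eqref{stringeq}: $|U(\TT)\cap B_\rho|/|B_\rho|$ relates to $\mu(\TT)/(\mu(\TT_\rho)\cdot\#)$, and a positive exponent forces a density loss at the intermediate scale.

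\emph{Case B (all directions nearly coplanar).} Suppose $V$ lies within a $\lessapprox$-small neighborhood of a great circle, i.e.\ all fat tubes through $B$ are nearly parallel to a plane $\Pi$; choose coordinates with $\Pi$ the $(x_1,x_2)$-plane. If $V$ contains two directions at angle $\sim 1$, the two-direction version of Lemma \ref{lembilin}, run inside each $\delta$-thin horizontal layer, shows that $E$ is a union of full $\delta\times\sqrt{\delta}\times\sqrt{\delta}$ slabs parallel to $\Pi$. This gives $E = [0,\sqrt{\delta}]^2\times A$ and both bullet points. Here "nearly coplanar" must be at angular precision $\sqrt{\delta}$ (i.e.\ $\delta/\sqrt\delta$), so that tubes tilted out of $\Pi$ stay within a $\delta$-layer across $B$. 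An intermediate tilt would let a tube cross layers and generate a third transverse direction at the rescaled scale, returning us to a multiscale version of Case A.

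\emph{Case C (all directions in a small cap).} Suppose $V$ lies in a small angular cap. We must exclude this using stickiness and $\Delta_{max}(\TT)\lessapprox 1$. The idea is that a sticky set has $\approx \rho^{-2}$ fat tubes in $\rho$-separated directions, which cannot concentrate in a narrow sector at a typical ball without making $\mu(\TT_\rho)$ inconsistent with the count of tubes through $B$. The cleaner route: if the directions through typical balls lie in caps of radius $\theta\ll1$, then $\TT_\rho$ locally looks like a rescaled Kakeya set. Apply the definition of $\bes$ to the rescaled pieces (a $\theta$-sector rescaled is again a sticky Kakeya set) to gain a factor contradicting worst-case equality.

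The main obstacle is the intermediate regime between Cases A and B: directions that are not transverse at scale $1$ but are not coplanar to precision $\sqrt{\delta}$. I would handle it by pigeonholing the thickness $\theta$ of the direction set around its best-fitting plane and applying the Case A filling argument at an intermediate rescaling. A nontrivial filling at any intermediate scale should contradict the density bound, again via the perfect overlap equality at another choice of $\rho$.
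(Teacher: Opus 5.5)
Your proposal is correct and takes the same route as the paper's sketch. You use perfect overlap at $\rho=\sqrt{\delta}$ together with Lemma \ref{lempartubes}, then the filling argument of Lemma \ref{lembilin}, which forces either essentially full filling of $B$ (excluded by the density bound of Section \ref{subsecfrac}) or a union of parallel grains, and you invoke stickiness to exclude the small-angular-sector case. Your intermediate-tilt step (fill a slab of thickness $\theta\sqrt{\delta}$, then contradict $|U(\TT)\cap B_r|\approx(\delta/r)^{2\bes}|B_r|$ at $r=\theta\sqrt{\delta}$) soundly fills in details the paper omits, though your heuristic ratio is off: the relevant density is $\approx \mu(\TT_\rho)/\mu(\TT)\approx 1/\mu(\TT[T_\rho])$, not the ratio you wrote.
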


The proof of the grain structure lemma is similar to the proof sketch for Lemma \ref{lembilin} above.   We omit the details.

We refer to $B = B_{\sqrt{\delta}} \cap U(\TT_{\sqrt{\delta}})$ as a grain box.   We call each slab $G$ a grain.  For each grain box $B$,  we can choose coordinates so that $B \cap U(\TT) = [0, \sqrt{\delta}]^2 \times A$.   That means that the grains within a grain box are parallel.   But two grains in different grain boxes need not be parallel.

\subsection{Fractal structure of $A$} \label{subsecfrac}

The argument in the perfect overlap section also gives us detailed information about $| U(\TT) \cap B(x,\rho)|$ for any radius $\rho$,  and this gives us important information about $A$.

If $\TT$ is a worst-case sticky Kakeya set,  then we know that $|\TT| \approx \delta^{-2}$ and $\mu(\TT) \approx |\TT|^{\bes} = \delta^{-2 \bes}$.   Since $\mu(\TT) = \frac{ \sum_{T \in \TT} |T|}{|U(\TT)|} \approx \frac{1}{|U(\TT)|}$,  we see that

\begin{equation} \label{volUTst} |U(\TT)| \approx \delta^{2 \bes}
\end{equation}

Now we saw above that if $\TT$ is a worst-case sticky Kakeya set,  then $\TT_\rho$ is also a worst-case sticky Kakeya set for every $\rho \in [\delta, 1]$.   Therefore,  $|U(\TT_\rho)| = \rho^{2 \bes}$.   Now $U(\TT_\rho)$ is the $\rho$-neighborhood of $U(\TT)$.   By uniformity,  we will assume that $|U(\TT) \cap B_\rho|$ is the same for each $B_\rho \subset U(\TT_\rho)$,  and then we get

$$ |U(\TT)| \approx \frac{ |U(\TT) \cap B_\rho|}{|B_\rho|} |U(\TT_\rho)|. $$

Plugging in our values for $|U(\TT)|$ and $|U(\TT_\rho)|$, we see that for each $B_\rho \subset U(\TT_\rho)$, 

$$ |U(\TT) \cap B_\rho| \approx \left( \frac{\delta}{\rho} \right)^{2 \bes} |B_\rho|. $$

If $\rho \le \sqrt{\delta}$,  then $U(\TT) \cap B_\rho$ is described by the grain structure and its geometry depends on the set $A$.   So for any $\rho \in [\delta, \sqrt{\delta}]$ and any interval $I_\rho$ of length $\rho$ centered at a point of $A$,  we get

\begin{equation} \label{fractalA1} |A \cap I_\rho| \approx \left( \frac{ \delta}{\rho} \right)^{2 \bes} \rho. \end{equation}

It is nicest to rewrite this equation in terms of $\delta$-covering numbers.   Recall that the $\delta$-covering number of a set $X$,  written $|X|_\delta$, is the minimal number of $\delta$-balls needed to cover $X$.   Rewriting \eqref{fractalA1} in this language gives:

\begin{equation} \label{fractalA} |A \cap I_\rho|_{\delta} \approx \left( \frac{\rho}{\delta} \right)^{1 - 2 \bes}. \end{equation}

This equation describes the way that the set $A$ is spaced.

Remark.  This type of equation appears in the description of fractals like the Cantor set.   For instance,  if $A$ was the $\delta$-neighborhood of a Cantor set of dimension $1 - 2 \bes$,  it would satisfy this equation.   The technical name for \eqref{fractalA} in the literature is that $A$ is the $\delta$-neighborhood of an AD regular set of dimension $1 - 2 \bes$.



\section{Grain structure leads to complex conjugation structure} \label{seccompconj}

Each tube of $\TT$ enters many different grain boxes, and the grains in these boxes have different slopes.  Let us fix one tube $T_1 \in \TT$ and choose coordinates so that the core line of $T_1$ is the $x$-axis.  For each $x \in [0,1]$, the point $(x,0,0)$ lies in a grain box, and the planes in that grain box are parallel to the $x$-axis.   Therefore,  the grain through $x$ must have the form $z = s(x) y$.   We call $s(x)$ the slope function.

In this section, we will study the function $s(x)$.  Recall that in the Heisenberg group example, in well-chosen coordinates, $x \in \CC$ and $s(x) = \bar x$.  We will see that for a worst-case sticky Kakeya set, the function $s(x)$ has some special properties analogous to properties of complex conjugation.

Recall that if $\TT$ is a worst-case sticky Kakeya set, and if $\rho \gg \delta$, then $\TT[T_\rho]$ is also a worst-case sticky Kakeya set.  So by the grain structure analysis, $U(\TT[T_\rho])$ is also organized into grains.   We can compute the dimensions of these grains by a change of variables argument.    There is a linear change of variables that takes $T_\rho$ to a unit cube and takes $\TT[T_\rho]$ to a set $\tilde \TT$ of $\delta/\rho$-tubes in the unit cube.   According to our grain structure lemma,  $\tilde \TT$ is organized into grain boxes of side length $\sqrt{\delta/\rho}$.  Undoing the linear change of variables, we see that the tubes of $\TT[T_\rho]$ are organized into grain boxes of dimensions $\sqrt{\delta \rho} \times \sqrt{\delta \rho} \times \sqrt{\delta / \rho}$.  We call these long grain boxes, because they are longer and thinner than the regular (cubical) grain boxes.  If $\rho = \sqrt{\delta}$, then these long grain boxes have dimensions $\delta^{3/4} \times \delta^{3/4} \times \delta^{1/4}$.  If $LGB$ is a long grain box, then $U(\TT) \cap LGB$ consists of parallel slabs of dimensions $\delta \times \delta^{3/4} \times \delta^{1/4}$.   We call these slabs long grains. 

The grains in the grain boxes and the long grains in the long grain boxes have to fit together.  In particular, we will fit together two grain boxes and two long grain boxes to form a kind of cycle.  Following the grains around this cycle gives interesting information about the slope function $s(x)$.   Next we explain step by step how the grain boxes and long grain boxes fit together.

We will write $GB$ for a grain box and $G$ for a grain.  We will write $LGB$ for a long grain box and $LG$ for a long grain.  

Let's begin with a grain box around some tube $T_1$.  By choosing coordinates, we can suppose that this box is  $[0, \sqrt{\delta}]^3$.  Because the long grain boxes have height only $\delta^{3/4}$, we will focus on only the bottom part of the box, given by $[0, \sqrt{\delta}]^2 \times [0, \delta^{3/4}]$.  According to the grain structure lemma,  
$U(\TT) \cap [0, \sqrt{\delta}]^2 \times [0, \delta^{3/4}] = [0, \sqrt{\delta}]^2 \times A$, where $A \subset [0, \delta^{3/4}]$.  Here is a picture.


\includegraphics[scale=.7]{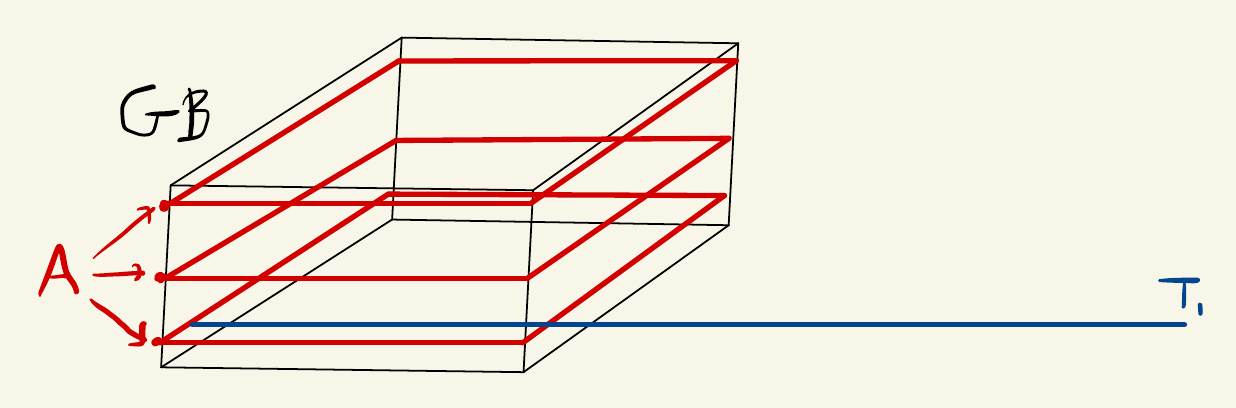}

We're going to have to add more objects to the picture, so for simplicity, we only draw one side of our original grain box.  Here is the abbreviated picture:


\includegraphics[scale=.7]{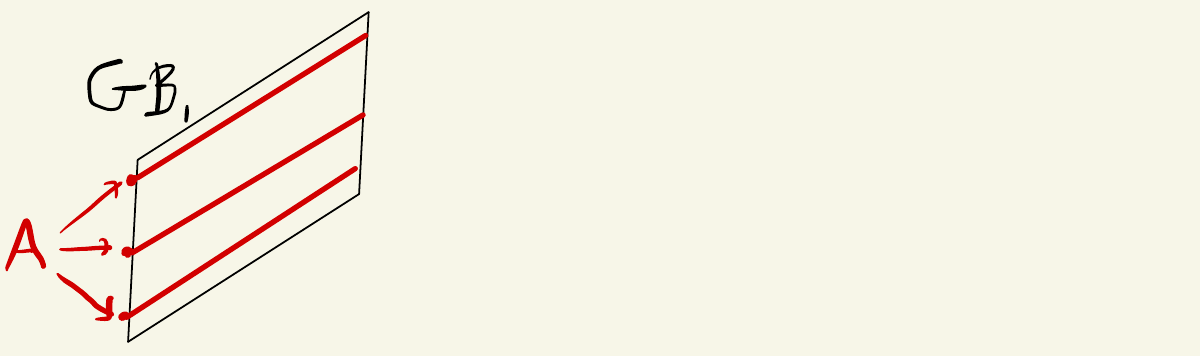}

\noindent Each red line in this picture represents a grain.   We label this first grain box $GB_1$ because we are going to introduce a second grain box soon.   The tube $T_1$ also lies in a long grain box $LGB_1$.   The long grain box $LGB_1$ begins in $GB_1$ but it is much longer than $GB_1$.   Let us add $LGB_1$ to our picture.

\includegraphics[scale=.7]{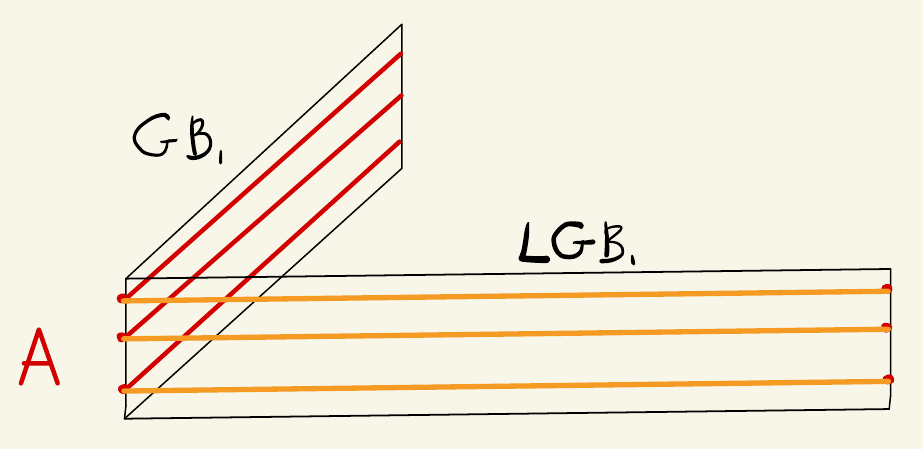}

In this picture,  the bottom rectangle is $LGB_1$,  and the orange horizontal lines in $LGB_1$ represent the long grains.   The long grains have dimensions $\delta \times \delta^{3/4} \times \delta^{1/4}$.    The line in the picture represents the long axis of the long grain.   To keep the picture from being too crowded,  we have left out both of the short axes.   The $\delta^{3/4}$ axis of the long grain runs parallel to the grains of $GB_1$.   (Also, ideally the long grain should be much longer than the grains in $GB_1$,  but it is hard to get so many scales right in the picture.)

The long grain box $LGB_1$ enters many other grain boxes.   If we start at $GB_1$ and follow $LGB_1$ for a distance $x$,  we end up in a second grain box $GB_2$.   The grains in these three grain boxes should fit together as shown in the following picture:

\includegraphics[scale=.7]{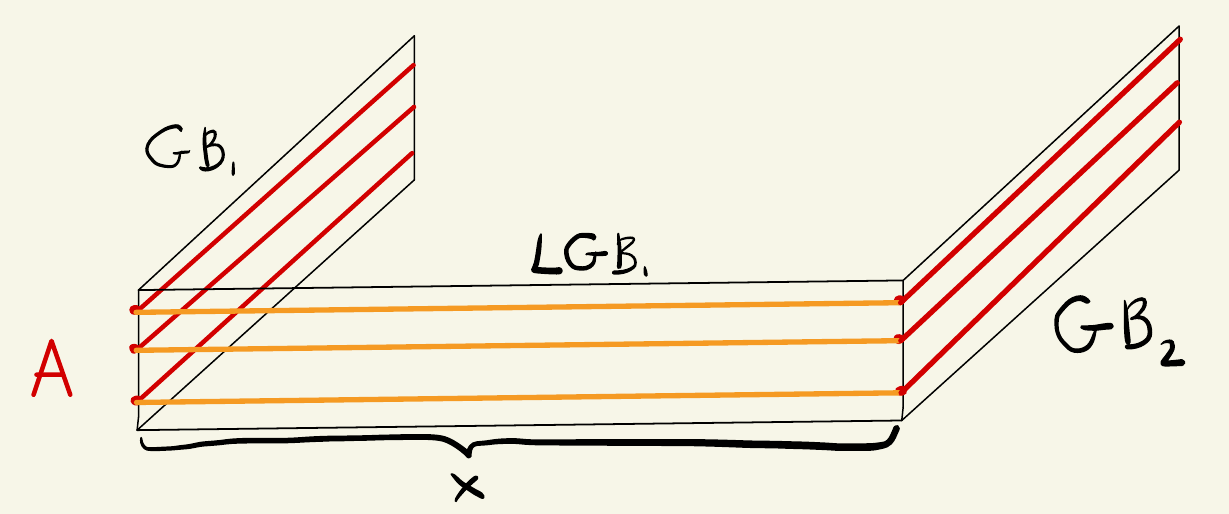}

Recall that the grains in $GB_2$ are all parallel to each other,  but the slope of the grains in $GB_2$ may be slightly different from the slope of the grains in $GB_1$.   We have tried to show this in the picture.   The slope of the grains in $GB_2$ is $s(x)$.

Now there are many different long grains running from $GB_1$ to $GB_2$.   If we follow $GB_2$ for a distance $y$ (in the $y$ direction),  then we end up in a second long grain box $LGB_2$ parallel to the first long grain box $LGB_1$.    Both $LGB_1$ and $LGB_2$ run from $GB_1$ to $GB_2$ and they fit together as in the following picture:

\includegraphics[scale=.7]{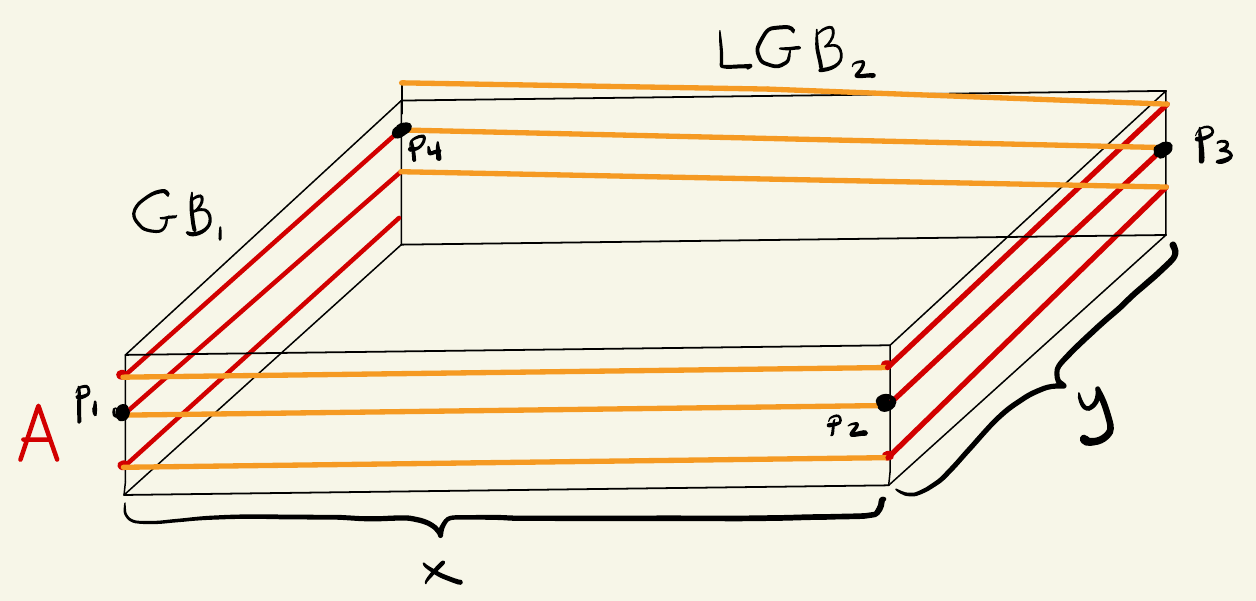}

 This picture reminds me of the ramps in a parking garage.  Starting at some point $p_1$ in the original grain box, we follow a long grain to $p_2$,  then a regular grain to $p_3$,  and then a long grain to $p_4$.   At the end of this cycle,  we have come back to a new ``floor'' in the original grain box.  In this process, no matter what ``floor'' we start on, the $z$ coordinate goes up by a constant amount $\Delta z$.   We can compute $\Delta z$ in terms of the slopes of the different grains and $x$ and $y$.
 
Recall that the grains in $GB_2$ have slope $s(x)$, meaning that the planes are defined by equations of the form $z = s(x) y + c$.  Similarly, let us say that the long grains in the long grain box around $(0,y,0)$ have slope $\tilde s(y)$, meaning that the planes are defined by equations of the form $z = - \tilde s(y) x + c$.  (The negative sign is not important but is convenient.)   So the long grains in $LGB_1$ have slope $\tilde s(0)$ and the long grains in $LGB_2$ have slope $\tilde s(y)$.   We can choose coordinates so that $\tilde s(0) = 0$,  but $\tilde s(y)$ may not be zero.  Now we are ready to compute $\Delta z$.  

Now, if we start at a point $p_1 = (0, 0, z)$ with $z \in A$, we follow the long grain in $LGB_1$ to $p_2 = (x, 0, z + \tilde s(0) x) = (x, 0,z)$.   Then we follow the grain in $GB_2$ to $p_3= (x, y, z + s(x) y)$.   Then we follow the long grain in $LGB_2$ to $p_4 = (0, y, z + s(x) y + \tilde s(y) x)$.  We have now arrived back at a new grain in $GB_1$, and so we should have $z + s(x) y + \tilde s(y) x \in A$.  We have arrived at the following key result:

\begin{lemma} \label{compconjstruclem} If $\TT$ is a worst-case sticky Kakeya set and $A, s(x), \tilde s(y)$ are defined as above, then for  $x \in [0, \delta^{1/4}]$ and $y \in [0, \delta^{1/2}]$, 

\begin{equation} \label{compconjstruceq}
A + s(x) y + \tilde s(y) x \approx A
\end{equation}

\end{lemma}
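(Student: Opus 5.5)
The plan is to make the ``parking garage'' cycle rigorous. We chase a point of $U(\TT)$ through two grain boxes and two long grain boxes, and check at each step that we stay inside $U(\TT)$ up to errors of order $\delta$.

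\textbf{Setup.} Fix $T_1\in\TT$ and coordinates with $GB_1=[0,\sqrt\delta]^3$ and $U(\TT)\cap [0,\sqrt\delta]^2\times[0,\delta^{3/4}] = [0,\sqrt\delta]^2\times A$; this is the grain structure lemma. Apply the grain structure lemma to $\TT[T_{\sqrt\delta}]$, rescaled by the linear map sending $T_{\sqrt\delta}$ to a unit cube. This gives the long grain boxes $LGB_1$ through $(0,0,\cdot)$ and $LGB_2$ through $(0,y,\cdot)$, each of dimensions $\delta^{3/4}\times\delta^{3/4}\times\delta^{1/4}$, with long grains $z=-\tilde s(\cdot)x+c$, normalized so that $\tilde s(0)=0$. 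The ranges in the lemma come from these dimensions: $x\le\delta^{1/4}$ is the length of a long grain box, and $y\le\delta^{1/2}$ keeps us inside one grain box $GB_2$ at distance $x$. When $y>\delta^{3/4}$ one uses a different parallel long grain box, namely the one through $(0,y,\cdot)$.

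\textbf{Steps.}

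First, we need a compatibility fact: where a long grain box meets a grain box, the long grains and the grains both describe $U(\TT)$, so each long grain is (up to $\delta$) a union of pieces of grains. This is the same perfect-overlap reasoning as before, now applied between scales $\sqrt\delta$ and $\delta^{1/4}$ or $\delta^{3/4}$. In particular, the $\delta^{3/4}$-axis of a long grain is parallel to the grains it crosses.

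With that in hand we chase the point. Take $z\in A$, so $p_1=(0,0,z)\in U(\TT)$.
\begin{itemize}
\item $p_1$ lies in a long grain of $LGB_1$, which is contained in $U(\TT)$. Following it gives $p_2=(x,0,z)\in U(\TT)$.
\item $p_2$ lies in $GB_2$. The grain of $GB_2$ through $p_2$ is the plane $z'=z+s(x)y'$, so $p_3=(x,y,z+s(x)y)\in U(\TT)$ up to $O(\delta)$.
\item $p_3$ lies in $LGB_2$. Following its long grain back gives $p_4=(0,y,z+s(x)y+\tilde s(y)x)\in U(\TT)\cap GB_1$.
\end{itemize}
By the product structure of $GB_1$, this yields $z+s(x)y+\tilde s(y)x\in A$ up to $O(\delta)$. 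Hence $A+s(x)y+\tilde s(y)x\subset N_{C\delta}(A)$.

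Each step uses only that the relevant grain is contained in $U(\TT)$. Since a grain is a full slab, membership of one point of the grain in $U(\TT)$ forces the entire grain into $U(\TT)$; no tube-by-tube information is needed.

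\textbf{Upgrading inclusion to $\approx$.} Run the cycle in reverse (from $p_4$ back to $p_1$) to get the reverse inclusion. Alternatively, use the fractal estimate \eqref{fractalA}: both sets have the same $\delta$-covering number, so an inclusion up to $O(\delta)$ gives $\approx$.

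\textbf{Main obstacle.} The hard step is justifying that the slopes are consistent where the boxes overlap. The slope of the grains of $GB_2$ must be defined so that their trace along the long grain agrees with a single parameter $s(x)$. This requires that $s$ varies slowly: $s(x')-s(x)$ must be $\ll\delta^{1/2}$ over the $\sqrt\delta$ extent of a grain box, so that the error $(s(x')-s(x))y$ stays below $\delta$. Similarly, $\tilde s$ must be essentially constant across a single long grain box.

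I would first try to obtain this from the grain structure lemma applied at the intermediate scale. It says that within any $\sqrt\delta$-box the grains are parallel to within angle $\sqrt\delta$, hence to within $O(\delta)$ height over length $\sqrt\delta$. Combined with the $O(\delta)$ tolerance in ``$\approx$'', this should suffice. The remaining work is the pigeonholing needed to make ``typical'' grain boxes line up along both long grain boxes simultaneously.
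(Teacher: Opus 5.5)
Your proposal is correct and follows the paper's own argument: the same parking-garage cycle $p_1\to p_2\to p_3\to p_4$ through $LGB_1$, $GB_2$, $LGB_2$ and back into $GB_1$, which forces $z+s(x)y+\tilde s(y)x\in A$. Your extra remarks are bookkeeping that the paper deliberately suppresses, not a different route; they cover the perfect-overlap compatibility of grains with long grains, the slow variation of $s$ and $\tilde s$ at resolution $\delta$, and running the cycle backwards to get the reverse inclusion.
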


This result describes a very rigid structure.  Even if we just had a single number $t$ so that $A + t \approx A$, this would encode non-trivial arithmetic structure of the set $A$.  But we have a huge variety of such numbers $t$: every number $t$ that can be written as $s(x) y + \tilde s(y) x$, with $x, y$ at the appropriate scales.

In the Heisenberg group example (over $\CC$),  $A$ would be the real numbers intersected with a ball of an appropriate size, $s(x) = \bar x$ and $\tilde s(y) = \bar y$.  Then Equation \eqref{compconjstruceq} would follow because $\RR + \bar x y + \bar y x = \RR$ for every $x,y \in \CC$.  Equation \eqref{compconjstruceq} captures some of the complex conjugation structure in $\CC$ and so we call \eqref{compconjstruceq} complex conjugation structure.

In this exposition, we have suppressed the difference between things that are true for all points and things that are true for most/many points.  In the full proof this requires technical care.  In particular, the precise statement of Lemma \ref{compconjstruclem} would contain a lot of quantification.  It roughly says that for a large subset of $z \in A$ and a large subset of pairs $(x,y) \in [0, \delta^{1/4}] \times [0, \delta^{1/2}]$, $z + s(x) y + \tilde s(y) x \in A$.

\section{Complex conjugation structure leads to a contradiction} \label{seccompconjcontr}

Equation \eqref{compconjstruceq} captures some of the structure of complex conjugation and also the way that the real numbers fit in the complex numbers.  This is related to several other special features of the way that $\RR$ fits in $\CC$.  The most fundamental feature is that $\RR$ is closed under both addition and multiplication.  Starting from Lemma \ref{compconjstruclem}, the proof of Kakeya shows that there must be a set $A \subset \RR$ which is approximately closed under both addition and multiplication and also obeys spacing estimates as in \eqref{fractalA}.  This area is called sum-product theory, and we give a short introduction to it here.  

Suppose that $A \subset \RR$.  We write $A + A$ for the sumset

$$A + A = \{ a_1 + a_2 | a_1, a_2 \in A \}$$

\noindent and we write $A \cdot A$ for the product set

$$A \cdot A = \{ a_1 a_2 | a_1, a_2 \in A \}.$$  

\noindent If $A \subset \RR$ is a finite set, it is interesting to consider $|A + A|$ and $| A \cdot A|$.  If $A = \{ 1, ..., N \}$, then $|A| = N$ and $|A + A| \sim N$, but $|A  \cdot A | \approx N^2$.   Similarly,  if $A$ is a geometric progression such as $ \{ 2^{n/N} \}_{n=1}^N$, then $|A| = N$ and $|A \cdot A| \sim N$, but $|A + A | \approx N^2$.  Erd{\H o}s conjectured that for any finite set $A \subset \RR$, $|A+A| + |A\cdot A| \gtrapprox |A|^2$.  This conjecture is still open, but we do have some weaker bounds.  Erd{\H o}s and Szemeredi \cite{ES} proved that there is an exponent $\alpha > 0$ so that $|A+A| + |A\cdot A| \gtrapprox |A|^{1 + \alpha}$.  The best known value of $\alpha$ was improved many times, and it is currently a little more than 1/3.

These results are not exactly what is needed in Kakeya type problems.  The set $A$ that appears in our story is not a finite set of points but a finite set of $\delta$-intervals.  Instead of measuring the sizes of finite sets by cardinality, we need to measure the sizes of sets by $\delta$-covering numbers:  if $A \subset \RR$, we write $|A|_\delta$ for the minimum number of $\delta$-balls needed to cover $A$.  Switching our point of view to $\delta$-covering numbers, it is natural to ask: if $A \subset \RR$, then is it true that $|A + A|_\delta + |A \cdot A|_\delta \gtrapprox |A|_\delta^{1 + \alpha}$ for some $\alpha > 0$?  The answer to this question is no.  For instance, this inequality fails if $A = \{ 1 + j \delta \}_{j=1}^J$ whenever $1 \ll J \le \delta^{-1}$.  In order to get a non-trivial sum-product inequality, we need to add an extra assumption saying that $A$ is not packed too tightly into an interval.  The first such theorem was proven by Bourgain in \cite{BSP}, and there is also a closely related result by Edgar-Miller \cite{EM}.  

\begin{theorem}  \label{boursumprod} (Bourgain ``discretized'' sum-product theorem) Suppose that $0 < s < 1$, and $A \subset [0,1]$ with 

\begin{itemize}

\item $|A|_\delta \approx \delta^{-s}$.

\item If $B(x,r) \subset [0,1]$, then $| A \cap B(x,r) |_{\delta} \lessapprox (r/\delta)^s$

\end{itemize}

Then $|A + A|_\delta + |A \cdot A|_\delta \gtrapprox \delta^{-s - \epsilon}$, where $\epsilon = \epsilon(s) > 0$.  

\end{theorem}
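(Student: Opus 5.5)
We argue by contradiction. Suppose $|A+A|_\delta + |A\cdot A|_\delta \le \delta^{-s-\epsilon}$ with $\epsilon$ tiny. The first step is a Pl\"unnecke--Ruzsa / Balog--Szemer\'edi--Gowers type reduction, carried out at the level of $\delta$-covering numbers. After passing to a large subset $A' \subset A$ (still obeying the non-concentration hypothesis up to $\delta^{-O(\epsilon)}$ losses), we get that every bounded polynomial expression in $A'$ has covering number $\le \delta^{-s-O(\epsilon)}$. Examples are $A'A' - A'A'$, $(A'-A')(A'-A')+\dots$, and quotients $(A'-A')/(A'-A')$ restricted to scales where the denominators are not too small. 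This part is the routine additive-combinatorics machinery (Ruzsa covering lemma, Katz--Tao style sum-product manipulations), adapted to $\delta$-discretized sets.

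The second step uses the non-concentration hypothesis to produce a good ``ratio'' set. Because $|A\cap B(x,r)|_\delta \lessapprox (r/\delta)^s$, a typical pair $a_1,a_2\in A'$ has $|a_1-a_2|\gtrsim \delta^{O(\epsilon)}$. So the ratio set $Q=(A'-A')/(A'-A')$, restricted to such pairs, makes sense at scale $\delta^{1-O(\epsilon)}$. The key dichotomy is the standard one from the finite-field proof (Bourgain--Katz--Tao):
\begin{enumerate}
\item Either $Q$ is approximately closed, i.e. $Q+Q\approx Q$ and $Q\cdot Q\approx Q$, at a scale $\delta^{-s-O(\epsilon)}$.
\item Or there is $q\in Q$ with $|A'+qA'|_\delta \ge \delta^{-s-c}$ for some $c>0$ depending on $s$.
\end{enumerate}
In the second case, writing $q=(a-b)/(c-d)$ and expanding shows $|A'+qA'|_\delta$ is controlled by covering numbers of polynomial expressions in $A'$. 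These are small by Step 1, which is a contradiction.

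The third step rules out the first case, and this is where non-concentration is essential; I expect it to be the main obstacle. An approximately closed set $Q\subset\RR$ of size $\delta^{-s}$ with $0<s<1$ behaves like an approximate subring. Unlike $\FF_p$, $\RR$ has no such discretized subrings of intermediate dimension. To make this rigorous, I would first use that $Q$ inherits a non-concentration estimate at all scales from $A$. Then I would pick scales $r$ where $Q\cap B(0,r)$ is spread out, and use multiplication by an element of $Q$ of size about $r$ to rescale. Iterating $Q+Q\approx Q$ then forces $Q$ to fill an interval at a coarser scale, that is, $|Q|_\delta \gtrsim \delta^{-1+O(\epsilon)}$. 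This contradicts $|Q|_\delta\le\delta^{-s-O(\epsilon)}$ with $s<1$. The delicate points are bookkeeping the $\delta^{O(\epsilon)}$ losses across the boundedly many iterations, which fixes how small $\epsilon(s)$ must be. One must also ensure $Q$ is not concentrated near $0$; I would handle that by a pigeonholing over dyadic scales, using the $(r/\delta)^s$ hypothesis.
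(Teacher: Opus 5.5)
\textbf{There is a genuine gap: the step that distinguishes $\RR$ from $\CC$ is missing.} The paper does not prove this theorem. It proves the model statement over $\FF_p$ (Lemma \ref{lemFpvsFq}) and credits Guth--Katz--Zahl with adapting it to $\RR$.

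Your Step 2 dichotomy does no work. Its second branch asks for $q\in Q$ with $|A'+qA'|_\delta$ large. But your own Step 1 says every $q\in Q$ gives $|A'+qA'|_\delta$ small, so that branch is empty. You also give no reason why failure of approximate closure would produce such a $q$.

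In the $\FF_p$ argument, both estimates are attached to an element \emph{outside} $Q=\frac{A-A}{A-A}$:
\begin{itemize}
\item If $c\notin Q$, then $(a_1,a_2)\mapsto a_1+ca_2$ is injective, so $|A+cA|=|A|^2$.
\item If $c=b+1$ with $b\in Q$, then $A+cA$ is a dilate of a subset of a fixed polynomial expression in $A$, so it is small.
\end{itemize}
The existence of $b\in Q$ with $b+1\notin Q$ whenever $Q\ne\FF_p$ is exactly the step that fails in $\FF_{p^2}$. Your proposal never supplies its real analogue.

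Over $\RR$ even the injectivity half is delicate. A near-coincidence $|(a_1-a_1')+q(a_2-a_2')|\le\delta$ with $|a_2-a_2'|$ tiny only pins $q$ down to within $\delta/|a_2-a_2'|$. So ``$q$ far from $Q$'' must be measured at scales adapted to these separations. The case where $Q$ essentially fills an interval needs a separate energy argument. That multiscale bookkeeping is the real content of the proof.

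As a result, all the difficulty falls on Step 3. There, the claim that $\RR$ has no approximately closed discretized subring of intermediate dimension is just the theorem restated for $Q$ in place of $A$. The mechanism you suggest does not prove it, because approximate additive closure is compatible with non-concentration. Take the $\delta$-neighbourhood of the numbers whose binary digits vanish outside a fixed periodic pattern of blocks, with period $\gtrsim 1/\epsilon$. This is a $(\delta,s)$-set with $|Q+Q|_\delta\le\delta^{-\epsilon}|Q|_\delta$. It is also approximately invariant under rescaling by suitable powers of $2$ that lie in $Q$.

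So iterating $Q\supset Q+xQ$ gains nothing unless you already know $|Q+xQ|_\delta\ge\delta^{-\epsilon}|Q|_\delta$ for some $x$ in a non-concentrated set. That is Bourgain's discretized projection theorem, which is as deep as the statement you are proving. You could instead realize your rescaling idea through an entropy inverse theorem of Hochman type, but that is a major ingredient you would have to state and use.

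Finally, nothing in Step 3 uses a property of $\RR$ that fails in $\CC$. The set $\RR\cap B(0,1)\subset\CC$ is non-concentrated at its own dimension, approximately closed under $+$ and $\cdot$, and invariant under rescaling by its own elements, yet it fills no disk. Any valid argument must show where the one-dimensionality of $\RR$ enters, as the $b+1$ step does over $\FF_p$.
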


With this background,  we can return to discussing the proof of the sticky Kakeya theorem.   Recall from \eqref{fractalA} that the set $A$ obeys the hypotheses above with $s = 1 - 2 \bes$.    We know by earlier arguments that $\bes \le 1/4$ and so if $\bes > 0$,  then $0 < s < 1$.   Roughly speaking,  we might hope that the complex conjugation structure forces $A+A$ and $A \cdot A$ to be small,  which would give a contradiction.   I think this is morally correct but the actual proof is somewhat more complicated.  

First of all there are some trivial ways that $s(x)$ and $\tilde s(y)$ can satisfy \eqref{compconjstruceq}.   We could have $s(x) = \tilde s(y) = 0$.   Or we could have $s(x) = x$ and $\tilde s(y) = - y$.   The proof in \cite{WZ1} first rules out these possibilities.   For instance,  if $s(x) = 0$,  then it would mean that all the tubes crossing $T_1$ must lie in a common planar slab,  and this would eventually force too many tubes into a planar slab,  violating the hypothesis that $\Delta_{max}(\TT) \lessapprox 1$.    

Starting from the complex conjugation structure,  the proof eliminates these trivial possibilities and then shows that some new set related to $A$ is approximately closed under both addition and multiplication,  which contradicts Theorem \ref{boursumprod}.
The argument has several steps,  and we are not able to sketch all the steps in this survey.   The ideas were developed by Katz,  Tao,  Orponen, Shmerkin,  Wang, and Zahl.    The ideas are related to major recent progress in the field of projection theory. 

\vskip10pt

At the beginning of our discussion,  we mentioned that the Kakeya theorem is hard to prove because the Heisenberg group example shows that the analogous statement over $\CC$ is false.   The Heisenberg group example is sticky,  and so the complex analogue of the sticky Kakeya theorem is also false.   Any proof of the Kakeya theorem or sticky Kakeya theorem must include a step that distinguishes $\RR$ from $\CC$.   The sum-product theorem,  Theorem \ref{boursumprod},  is the crucial step that distinguishes $\RR$ from $\CC$.   

Let us quickly note that the analogue of Theorem \ref{boursumprod} over $\CC$ is false.    The counterexample comes because $\RR$ is a subring of $\CC$.   More precisely,  we let $A = \RR \cap B(0,1) \subset \CC$.   Then $|A|_\delta \sim |A+A|_\delta \sim |A \cdot A|_\delta$,  and these are all much smaller than $|B(0,1)|_\delta$.   

Since distinguishing $\RR$ from $\CC$ is a crucial part of the proof of Kakeya,  let us sketch how it is done.   The simplest version of the idea takes place over finite fields.   We let $\FF_q$ denote the finite field with $q$ elements.   We write $q = p^r$ with $p$ prime.   If $r \ge 2$,  then $\FF_q$ has non-trivial subfields.   In particular,  $\FF_{p^2}$ is a degree 2 extension of $\FF_p$,  just as $\CC$ is a degree 2 extension of $\RR$.   Proving quantitative bounds that distinguish $\RR$ from $\CC$ is closely related to proving quantitative bounds that distinguish $\FF_p$ from $\FF_{p^2}$.   

We will prove a sum-product type estimate over $\FF_p$ when $p$ is prime.  Our result will involve some sets more complicated than $A+A$ or $A \cdot A$.   We will need the following definitions.

$$ (A \cdot A)^{\oplus 3} = \left\{ a_1 a_2 + a_3 a_4 + a_5 a_6: a_i \in A \right\}. $$

$$\frac{A-A}{A-A} = \left\{ \frac{a_1 - a_2}{a_3 - a_4} : a_i \in A,  a_3 \not= a_4 \right\}.$$

$$ \frac{(A\cdot A)^{\oplus 3}-(A\cdot A)^{\oplus 3}}{A-A} = \left\{ \frac{ b_1  - b_2 }{a_1 - a_2}: b_i \in (A \cdot A)^{\oplus 3},  a_i \in A,  a_1 \not= a_2 \right\}. $$

If $c \in \FF_p$,  then

$$ A + c A  = \{ a_1 + c a_2 : a_i \in A \}. $$

\begin{lemma} \label{lemFpvsFq} Suppose that $p$ is prime and $A \subset \FF_p$.  Then either
    \begin{enumerate}
        \item $\frac{A-A}{A-A}=\FF_p$, or
        \item $\left|\frac{(A\cdot A)^{\oplus 3}-(A\cdot A)^{\oplus 3}}{A-A}\right|\geq|A|^2$.\label{2}
    \end{enumerate}

\end{lemma}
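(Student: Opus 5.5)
We write $Q = \frac{A-A}{A-A}$ and split into the two cases of the statement. The strategy is the standard "prime field has no subfields" argument. Either $Q$ is all of $\FF_p$, and we are in case (1). Or we find an element $c$ for which the map $(a,a') \mapsto a + c a'$ is injective on $A \times A$, and then show that $c$ can be chosen inside $\frac{(A\cdot A)^{\oplus 3}-(A\cdot A)^{\oplus 3}}{A-A}$. Throughout we assume $|A| \ge 2$, since otherwise (2) is trivial: we need $|\cdot|\ge 1$, which holds whenever $A-A$ has a nonzero element, and otherwise the set in (2) is empty and the statement degenerates.

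The first observation is that $c \notin Q$ exactly when $|A + cA| = |A|^2$. Indeed, $a_1 + c a_2 = a_3 + c a_4$ with $(a_1,a_2)\neq(a_3,a_4)$ forces $a_2 \neq a_4$ and $c = \frac{a_3-a_1}{a_2-a_4} \in Q$. So if we find $c \notin Q$ and an injection of $A + cA$ into the set in (2), we are done.

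Suppose $Q \neq \FF_p$. Note that $Q$ contains $0$ and $1$, and is closed under $q \mapsto -q$ and (for $q\neq 0$) $q \mapsto 1/q$. If $Q$ were closed under $q \mapsto q+1$, then starting from $0$ we would get $Q = \FF_p$, because $p$ is prime and $1$ generates $\FF_p$ additively. This is the step that uses primality, and it is what fails for $\FF_{p^2}$. Hence there is $q = \frac{a_1-a_2}{a_3-a_4} \in Q$ with $c := q+1 \notin Q$. Write
\[ c = \frac{a_1 - a_2 + a_3 - a_4}{a_3 - a_4}. \]
For $b, b' \in A$ we then have
\[ b + c b' = \frac{b(a_3-a_4) + b'(a_1-a_2+a_3-a_4)}{a_3-a_4}. \]
The numerator expands to $(b a_3 + b' a_1 + b' a_3) - (b a_4 + b' a_2 + b' a_4)$, which is a difference of two elements of $(A\cdot A)^{\oplus 3}$. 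The denominator lies in $A - A$ and is nonzero. So $A + cA$ is contained in $\frac{(A\cdot A)^{\oplus 3}-(A\cdot A)^{\oplus 3}}{A-A}$, and by the injectivity observation the latter has size at least $|A + cA| = |A|^2$. This gives case (2).

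The main obstacle is only the bookkeeping: choosing the "$+1$" step so that $c$ visibly lands in the prescribed expression, with exactly three products in each term of the numerator. The rest is routine.
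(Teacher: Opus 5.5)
Your proposal is correct and is essentially the paper's proof: injectivity of $(a,a')\mapsto a+ca'$ on $A\times A$ for $c\notin \frac{A-A}{A-A}$, primality to find $q$ in this set with $q+1$ outside it, and the expansion placing $A+(q+1)A$ inside the quotient set of (2), which you write out explicitly where the paper only says ``expanding everything out.'' One small correction to your preliminary remark: for $|A|=1$, alternative (2) is not trivial but false, since both sets are empty (alternative (1) fails for the same reason), so $|A|\ge 2$ is a genuinely needed hypothesis, left implicit in the paper's statement.
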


We note that this lemma is not true in all finite fields.   If $q = p^2$ and $A = \FF_p \subset \FF_q$,  then all the complicated sets in the bullet points are equal to $A$,  and they all have cardinality much smaller than $|\FF_q|$ or $|A|^2$.   The proof of this theorem must distinguish $\FF_p$ from $\FF_{p^2}$.

\begin{proof}
First, note that if $c\not\in\frac{A-A}{A-A}$, then $|A+cA|=|A|^2$.     Indeed if $|A + c A|< |A|^2$,  then by the pigeonhole principle,  we could find $a_1,a_2,a_1',a_2'\in A$ with $a_1+ca_2=a_1'+ca_2'$.   But this implies $c=\frac{a_1'-a_1}{a_2-a_2'}\in\frac{A-A}{A-A}$.

    Next, note that if $\frac{A-A}{A-A}\neq\FF_p$ then there is some $b\in\frac{A-A}{A-A}$ with $b+1\not\in\frac{A-A}{A-A}$.  This step is true in $\FF_p$ but it would fail in $\FF_{p^2}$.   

    Now, if $\frac{A-A}{A-A}\neq\FF_p$, then we have $|A + (b+1) A| \ge |A|^2$ with $b \in \frac{A-A}{A-A}$.   Expanding everything out,  we see that 
    
   $$ A + (b+1) A \subset  \frac{(A\cdot A)^{\oplus 3}-(A\cdot A)^{\oplus 3}}{A-A}. $$
   
   This gives the second option above.
\end{proof}

Remark.  Some version of the idea of this proof goes back to the work of Edgar-Miller \cite{EM},  and the argument was adapted by Bourgain-Katz-Tao \cite{BKT} and Garaev \cite{Ga}.   This proof can also be adapted to the setting of $\RR$ and $\CC$,  which was done by Guth-Katz-Zahl in \cite{GKZ}.  
There are multiple proofs of Theorem \ref{boursumprod}.   An adapted version of Lemma \ref{lemFpvsFq} plays a key role in one proof (from \cite{GKZ}).   The proof of Theorem \ref{boursumprod} is technically complicated,  and there is a good recent exposition of this area in \cite{ORSW}.

\section{Leveraging the sum-product theorem at many scales}

Let us pause to digest a surprising feature of the proof of the sticky Kakeya theorem.   A key obstacle in proving sticky Kakeya is that the analogue over $\CC$ is false.   The sum-product theorem (Theorem \ref{boursumprod}) distinguishes $\RR$ from $\CC$.    However,  the bounds in Theorem \ref{boursumprod} are not sharp, and indeed they are very weak.  How can a non-sharp and very weak theorem be used as a key step in the proof of a sharp theorem?

Over the last several years,  there have been a number of sharp results proven using the non-sharp Theorem \ref{boursumprod}.    This body of work has had a major influence in the field of projection theory.    Some of the main contributors are Orponen,  Shmerkin,  Ren,  and Wang.   Roughly speaking,  it is possible to prove strong and sharp estimates by applying Theorem \ref{boursumprod} many times at different scales.

The proof we have reviewed can be written in different ways.   Let us describe a way to rephrase the proof to highlight the way that we are exploiting Theorem \ref{boursumprod} many times at different scales.

We define $\bes(\delta)$ to be the least exponent so that for every sticky Kakeya set of $\delta$-tubes,  $\mu(\TT) \le | \TT |^{\bes(\delta)}$.    In this language,  the sticky Kakeya theorem says that $\lim_{\delta \rightarrow 0} \bes(\delta) = 0$.   We can organize our proof using a key lemma which says that if $\delta_2$ is far smaller than $\delta_1$, then $\bes(\delta_2)$ is smaller than $\bes(\delta_1)$ by a definite amount.   Here is the precise statement of the lemma.

\begin{lemma} For any $\beta > 0$,  there are $\epsilon > 0$ and $K > 0$ so that if $\delta_1 < 1/10$ and $\bes(\delta_1) \ge \beta$ and $\delta_2 \le \delta_1^K$,  then $\bes(\delta_2) < \bes(\delta_1) - \epsilon$.    Also $\epsilon = \epsilon(\beta)$ and $K = K(\beta)$ are continuous in $\beta$.
\end{lemma}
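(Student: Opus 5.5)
We prove the lemma by running the argument of Sections \ref{secstickgrain}--\ref{seccompconjcontr} at a single finite scale and making every step quantitative. Fix $\beta>0$, $\delta_1$ with $\bes(\delta_1)\ge\beta$, and $\delta_2\le\delta_1^K$. Let $\TT$ be a sticky Kakeya set of $\delta_2$-tubes that is near-extremal at scale $\delta_2$. We must show $\mu(\TT)\le|\TT|^{\bes(\delta_1)-\epsilon}$.

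The first step is multiscale bookkeeping. Choose scales $\delta_2=\rho_0<\rho_1<\dots<\rho_m=1$ with consecutive ratios $\rho_{j}/\rho_{j+1}=\delta_1$, so that $m\approx K$. Iterating Lemma \ref{lemmultmu} along this chain gives
$$\mu(\TT)\lessapprox\prod_j \mu(\TT^{(j)}),$$
where each $\TT^{(j)}$ is a rescaled sticky Kakeya set of $\delta_1$-tubes: either a $\TT[T_{\rho_{j+1}}]$ thickened to scale $\rho_j$, or a piece of one after the linear change of variables used earlier. By definition of $\bes(\delta_1)$, each factor is at most $(\delta_1^{-2})^{\bes(\delta_1)}$. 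So
$$\mu(\TT)\lesssim C^m\,|\TT|^{\bes(\delta_1)}.$$
Because sticky sets have $\delta_1^{-2}$ tubes at every level, the $C^m$ losses amount to $\delta_2^{-O(1/\log(1/\delta_1))}$, which is harmless. It therefore suffices to show that a definite fraction, say half, of the factors gain $\delta_1^{-c}$, that is, satisfy $\mu(\TT^{(j)})\le(\delta_1^{-2})^{\bes(\delta_1)-c}$. If that fails, then at most steps $j$ the set $\TT^{(j)}$ is nearly worst-case at scale $\delta_1$, and the product inequality is nearly sharp at those steps as well.

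The second step exploits near-worst-case behaviour at a bad step. At such a step the perfect overlap property holds in quantified form, with losses of $\delta_1^{-O(c)}$ rather than $\lessapprox$. The arguments of Sections \ref{subsecgrain}--\ref{seccompconj} then produce three things at that scale. First, grain structure. Second, the set $A$ satisfies the regularity estimate \eqref{fractalA} with exponent $s=1-2\bes(\delta_1)\in(0,1)$, which is available because $\beta\le\bes\le 1/4$. Third, the complex-conjugation relation of Lemma \ref{compconjstruclem} holds, again with $\delta_1^{-O(c)}$ losses. After ruling out the degenerate slope functions as in \cite{WZ1}, using $\Delta_{max}\lessapprox1$, we extract a set that is approximately closed under sums and products and is regular with exponent $s$ at resolution $\delta_1^{O(1)}$. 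Theorem \ref{boursumprod} then gives a gain $\delta_1^{-\epsilon_0(s)}$, which contradicts near-sharpness once $c\ll\epsilon_0(s)$. We then set $\epsilon$ to be a small multiple of $\epsilon_0$ and take $K$ large enough to absorb the implicit constants.

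The main obstacle is the second step done honestly. Theorem \ref{boursumprod} is only nontrivial when $\delta_1$ is small relative to the losses. So every ``$\approx$'' in the perfect-overlap, grain, and conjugation arguments must be converted into explicit powers $\delta_1^{\pm O(c)}$ that do not depend on $\delta_2$. This requires all the pigeonholing at intermediate scales, and a check that the $\epsilon_0(s)$ from the sum-product theorem beats these losses uniformly.

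Continuity in $\beta$ comes from the fact that $\epsilon_0(s)$ can be taken continuous and positive on compact subsets of $(0,1)$, and every other constant depends continuously on it. If needed, we replace $\epsilon_0$ by its lower envelope over $[\beta/2,\beta]$ to enforce continuity.
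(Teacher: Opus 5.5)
Your proposal follows the paper's own route: the paper proves this lemma only by remarking that the sticky-case argument can be slightly adapted to a finite range of scales to give a small, non-sharp gain $\epsilon(\beta)$, and your $\delta_1$-adic chain of scales plus the quantified structural step (perfect overlap, grain structure, the complex conjugation relation \eqref{compconjstruceq}, and a contradiction with Theorem \ref{boursumprod}) is exactly that adaptation. When you fill it in, note that $\bes(\delta_1)$ only bounds configurations of scale ratio $\delta_1$, so near-extremality, and with it perfect overlap and the regularity \eqref{fractalA} of $A$ that supplies the non-concentration hypothesis of Theorem \ref{boursumprod}, is available only at the scales of your chain; the sum-product step must therefore be run across a long block of consecutive levels whose length depends on $\beta$, and this, rather than merely absorbing constants, is what the hypothesis $\delta_2 \le \delta_1^{K(\beta)}$ pays for.
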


\noindent Iterating this key lemma at many different scales gives the sticky Kakeya theorem.

The proof sketched in the sections above can be slightly adapated to give a proof of the key lemma.    The proof of the key lemma crucially uses the sum-product theorem,  Theorem \ref{boursumprod}.   The exponent in Theorem \ref{boursumprod} is not sharp,  and is only a tiny improvement on a trivial bound.   Partly for this reason,  the exponent $\epsilon = \epsilon(\beta)$ in the key lemma is not sharp and is only a tiny improvement on a trivial bound.   But applying the key lemma many times at different scales,  we get essentially sharp bounds.   In this process,  we are leveraging the sum-product theorem by applying it many times at different scales and getting a small gain each time.

This finishes our sketch of the proof of the sticky Kakeya theorem.

\section{Sticky vs.  not sticky} \label{secstickynot}

The proof of the sticky Kakeya theorem shows a remarkable connection between the sticky case of the Kakeya problem and mathematical structures like the Heisenberg group,  subrings of $\RR$, and sum-product inequalities.   It is certainly interesting mathematics.   But it was not so clear how much progress these results make towards the general Kakeya conjecture.   Is the sticky case a crucial case?  Or is it just a rare special case?

Should we expect the ``worst'' Kakeya set to be sticky?   Analysts have considered many cousins of the Kakeya problem.  For many years,  the worst known example for every cousin problem was sticky.   In \cite{B1} Bourgain considered a variation of the Kakeya problem for curved tubes in $\RR^3$.   In this curved version,  the smallest possible volume of $|U(\TT)|$ is $\sim \delta$,  and the worst-case example is sticky.   In \cite{KLT},  Katz, Laba, and Tao gave the Heisenberg group example,  which showed that the analogue of the Kakeya problem with convex Wolff axioms is false in $\CC^3$.   The Heisenberg group example is sticky.   It seemed plausible that for a broad class of problems of this type,  the worst-case is sticky.



Katz and Tao and others wondered whether the general Kakeya problem could be reduced to the sticky case,  but they didn't see any way to do it.   In 2017,  in \cite{O},  Orponen proved the sticky case of the Falconer conjecture,  another longstanding problem in geometric measure theory which is a kind of cousin of the Kakeya problem.   
This remarkable proof had a lot of influence in the field,  but no one has managed to reduce the general Falconer conjecture to the sticky case. 

\vskip10pt

Then in 2019 in \cite{KZ}, Katz and Zahl found a new cousin of the Kakeya problem,  and gave evidence that the worse case example is not sticky.
  They considered the Wolff axioms version of the Kakeya problem over a different ring -- they replaced $\RR$ by the ring $A = \FF_p[x] / (x^2)$.   The ring $A$ has a natural notion of distance with two distinct length scales.   There is a cousin of the Heisenberg group in $A^3$ and it leads to a counterexample to the analogue of Theorem \ref{main}.   But unlike in $\CC^3$,  the Heisenberg group cousin in $A^3$ is {\it not} sticky.   It appears likely that in $A^3$,  the sticky case of the analogue of Theorem \ref{main} is true,  but the general case is false.

As of a couple years ago,  I was quite pessimistic about reducing the general case of Kakeya to the sticky case.  

\vskip10pt

The first indication that the sticky case may play a crucial role in problems of this type was the solution of the Furstenberg set conjecture by Orponen-Shmerkin and Ren-Wang in 2024.   The Furstenberg set conjecture is a central question in the field called projection theory, which studies the orthogonal projections of sets and measures in $\RR^d$.    In the late 90s,  Tom Wolff identified the Kakeya problem,  the Falconer problem,  and the Furstenberg set problem as cousin problems involving related issues.  In particular, all three conjectures have versions over $\CC$ which are false, with counterexamples related to the subfield $\RR \subset \CC$.
  In 2024,  in \cite{OS2},  Orponen and Shmerkin proved the sticky case of the Furstenberg set conjecture.   Shortly afterwards,  in \cite{RW},  Ren and Wang proved the full Furstenberg conjecture.   
  
 The proof of the Furstenberg conjecture is a major development in the field, and I think it deserves its own survey article to describe.  Some of the key multiscale ideas in the proof of the Kakeya conjecture grew out of this work, developing over multiple papers, including \cite{BSP}, \cite{B2}, \cite{OS1}, \cite{O2}, \cite{SW}, \cite{OSW}, \cite{OS2}, \cite{RW}, \cite{DW}, and \cite{WW}.  
 The proof of the sticky case in \cite{OS2} leverages the sum-product theorem \ref{boursumprod} at many different scales, just like the proof of sticky Kakeya that we saw here.  It also has its own unique issues and features.   The proof of the general case of the Furstenberg conjecture reduces the problem to two cases: the sticky case and a case which is far from sticky,  which they call the semi-well-spaced case.   They resolve the semi-well-spaced case using Fourier analytic methods building on \cite{GSW}.   And they give a short and elegant multiscale argument which reduces the general Furstenberg conjecture to these two cases.

It was quite surprising to me that the general Furstenberg problem reduces to these two cases.  This work gave a hint that the sticky case might be a key case in other problems as well.   Then in 2025 in \cite{WZ}, Wang and Zahl  reduced the general case of the Kakeya problem to the sticky case.  The second main part of our survey describes this reduction.


\section{The $L^2$ method}

Before discussing the reduction to the sticky case,  let us briefly recall the classical $L^2$ method, which we will need in the proof.

If $T_1$ and $T_2$ are two $\delta$-tubes in $\RR^n$ that intersect at a point and the angle between their core lines is $\theta \ge \delta$,  then

\begin{equation} \label{volinttube}
 | T_1 \cap T_2 | \sim \delta^n \theta^{-1}. 
 \end{equation}

If $\TT$ is a set of $\delta$-tubes in $B_1$,  then we can use (\ref{volinttube}) to upper bound

$$ \int_{B_1} | \sum_{T \in \TT} 1_T |^2 = \sum_{T_1, T_2 \in \TT} |T_1 \cap T_2|. $$

Assuming that $| \TT | \approx \delta^{-(n-1)}$ and that $\Delta_{max}(\TT) \lessapprox 1$,  this method gives the sharp bound

$$ \int_{B_1} | \sum_{T \in \TT} 1_T|^2 \lessapprox \delta^{-(n-2)}. $$

\noindent (This bound is sharp when $\TT$ has one tube in each direction and they all go through the origin.)

Combining this $L^2$ bound with Cauchy-Schwarz gives a lower bound on $|U(\TT)|$.   If $\TT$ is a set of $\delta$-tubes in $\RR^2$ with $| \TT | \approx \delta^{-1}$ and $\Delta_{max}(\TT) \lessapprox 1$,  this method shows that $|U(\TT)| \gtrapprox 1$,  which is equivalent to $\mu(\TT) \lessapprox 1$.   This method resolves the Kakeya conjecuture in two dimensions.

In higher dimensions,  while this $L^2$ estimate is sharp,  it does not lead to good information about $|U(\TT)|$ or $\mu(\TT)$. 

On the other hand,  this $L^2$ method also works well for slabs in $\RR^3$.   For instance,  using the same method,  we can prove that if $\SSS$ is a set of $\delta \times 1 \times 1$ slabs in $B_1 \subset \RR^3$ with $| \SSS | \sim \delta^{-1}$ and $\Delta_{max}(\SSS) \lessapprox 1$,  then $\mu(\SSS) \lessapprox 1$ and $|U(\SSS)| \gtrapprox 1$.

This method can handle many questions about tubes in $\RR^2$ and slabs in $\RR^3$.   In the proof sketch below,  we will meet a few problems of this type,  and we will mention that they can be handled by the $L^2$ method.

\section{The worst case is sticky} \label{secworststicky}

We now begin the second big part of the proof of Kakeya: showing that a worst-case Kakeya set is sticky.   This part of the proof was done in \cite{WZ}.   Here we will follow the exposition in \cite{GWZ},  which slightly streamlines the original proof.  We describe the proof in Sections \ref{secworststicky}, \ref{secothershapes}, and \ref{sechighdens}.  


Recall that $\beta$ is the infimal number so that,  whenever $\Delta_{max}(\TT) \lessapprox 1$,  we have

\begin{equation} \label{critexp} \mu(\TT) \lessapprox |\TT|^\beta  \end{equation}

Recall that $\TT$ is a worst-case Kakeya set if $\Delta_{max}(\TT) \lessapprox 1$ and $\mu(\TT) \approx |\TT|^\beta$.   The hypothesis that $\Delta_{max}(\TT) \lessapprox 1$ implies that $|\TT| \lessapprox \delta^{-2}$.    For these notes,  we focus on the case that $| \TT | \approx \delta^{-2}$,  which shows the main ideas and leaves out some technical issues.  

Our goal is to prove that a worst-case Kakeya set must be sticky.  Assuming that $\beta > 0$ and that $\TT$ is not sticky,  we will prove that 

\begin{equation} \label{goalgain} \mu(\TT) \ll |\TT|^\beta
\end{equation}

\noindent (Recall we use the notation $\mu(\TT) \ll  |\TT|^\beta$ to mean that $\mu(\TT)$ is much less than $|\TT|^\beta$).   This inequality contradicts our assumption that $\mu(\TT) \approx |\TT|^\beta$,  and so we conclude that $\TT$ must be sticky.    Then the sticky Kakeya theorem implies that $\beta = 0$.  

Since $|\TT|\sim \delta^{-2}$,  if $\TT$ is not sticky it means that there is some scale $\rho \in [\delta, 1]$ so that $|\TT[T_\rho]| \ll (\rho/\delta)^{-2}$ and  $|\TT_\rho| \gg \rho^{-2}$.   We will focus on the case  that $\TT$ is not-sticky-at-all-scales,  meaning that for every $\rho$ in the range $\delta \ll \rho \ll 1$,  we have

\begin{equation} \label{notstickyall}  | \TT[T_\rho] | \ll (\rho/\delta)^{-2} \textrm{ and } | \TT_\rho | \gg \rho^{-2}. \end{equation}

Assuming \eqref{notstickyall}, we will sketch the proof of \eqref{goalgain}.  

\vskip10pt

In order to use the definition of $\beta$, we need to relate $\TT$ with other sets of tubes.  We will relate $\TT$ to some other set of tubes $\TT'$ with $\Delta_{max}(\TT') \lessapprox 1$ and we use that $\mu(\TT') \lessapprox |\TT'|^\beta$.  In the not sticky case, it is tricky to find helpful sets of tubes $\TT'$.  Recall that in the sticky case, $\TT_\rho$ and $\TT[T_\rho]$ were both sticky Kakeya sets.  In the not sticky case, we still have $\Delta_{max}(\TT[T_\rho]) \lessapprox 1$, but we don't have $\Delta_{max}(\TT_\rho) \lessapprox 1$, so it is harder to make use of $\TT_\rho$ directly.  One of the main new ideas in the proof is a way to find other relevant sets of tubes $\TT'$.  We will see below a couple different clever ways of doing this.


\subsection{Looking at $\TT$ inside a small ball}  \label{subsecsmallball}

Let $\rho$ be an intermediate scale with $\delta \ll \rho \ll 1$.   

To bound $\mu(\TT)$ in the sticky case,  we considered  $\TT[T_\rho]$ and $\TT_\rho$.  In the non-sticky case, Wang and Zahl also consider a new set of tubes formed by intersecting tubes of $\TT$ with a smaller ball $B \subset B_1$.

To set this up,  let's first let's think about how the tubes of $\TT[T_\rho]$ intersect each other.    If $T_1, T_2 \in \TT$, $T_1$ and $T_2$ intersect, and the angle between $T_1, T_2 \approx \rho$, then $T_1 \cap T_2$ is approximately a shorter tube of radius $\delta$ and length $\delta / \rho$.  Therefore, $U(\TT[T_\rho])$ is a union of shorter tubes of this kind.  Each of these short tubes lies in $\approx \mu(\TT[T_\rho])$ long tubes $T \in \TT[T_\rho]$.    Now let $B$ be a ball of radius $r = \delta /\rho$ and consider how these shorter tubes overlap inside of $B$.  Let $\TT[T_\rho]_{B}$ be the set of these shorter tubes in $B$.  So each tube $T_B \in \TT[T_\rho]_{B}$ is a $\delta \times \delta \times \delta/\rho$ tube in $B$ which lies in $\approx \mu(\TT[T_\rho])$ tubes of $\TT[T_\rho]$.    Figure \ref{fig:tube_int_ball} shows a picture.

    \begin{figure} [h!]
      \begin{center}
        \input{tube_intersect_ball_diagram}
      \end{center}
      \caption{Localizing tubes to a ball}\label{fig:tube_int_ball}
    \end{figure}
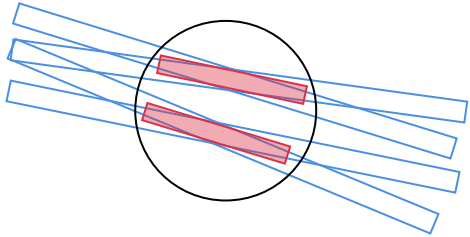

In Figure \ref{fig:tube_int_ball},  the long blue tubes belong to $\TT[T_\rho]$,  the short red tubes belong to $\TT_{T_\rho, B}$,  and the disk is $B$. 

Next we define

$$ \TT_B = \bigcup_{T_\rho \in \TT_\rho,  T_\rho \cap B \not= \emptyset} \TT[T_\rho]_{B}. $$

Figure \ref{fig:tube_int_ball_2} is a picture showing $\TT_B$:

    \begin{figure} [h!]
      \begin{center}
        \input{tube_intersect_ball_two_diagram}
      \end{center}
      \caption{The set of short tubes $\TT_B$}\label{fig:tube_int_ball_2}
    \end{figure}
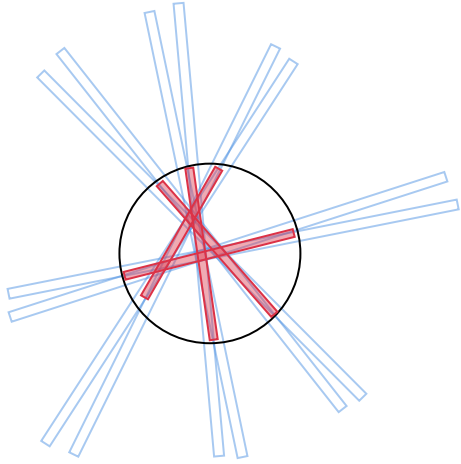

In this picture,  the circle is $B$,  the short red tubes belong to $\TT_B$,  and we see that each tube of $\TT_B$ lies in $\sim \mu(\TT[T_\rho])$ longer tubes of $\TT$.   Therefore,  we can bound $\mu(\TT)$ by 

\begin{equation} \label{twoscales3}  \mu(\TT) \lessapprox \mu(\TT[T_\rho]) \mu(\TT_B).  \end{equation}

We know that $\Delta_{max}(\TT[T_\rho]) \lessapprox \Delta_{max}(\TT) \lessapprox 1$, and so by the definition of $\beta$, $\mu(\TT[T_\rho]) \lessapprox | \TT[T_\rho] |^\beta$.  Since we are in the not-sticky-at-all-scales case (see \eqref{notstickyall}), we also know that  $| \TT[T_\rho]| \ll (\rho / \delta)^{2}$.  So we have

\begin{equation} \label{muTin}  \mu(\TT[T_\rho]) \ll (\rho/\delta)^{2 \beta}. \end{equation}

Next, we have to bound $\mu(\TT_B)$.   Here it is much less clear what to do.   To get started,  let's consider the special case $\Delta_{max}(\TT_B) \lessapprox 1$.  In this special case, we can prove our goal $\mu(\TT) \ll |\TT|^\beta$ by a simple induction argument.

\begin{lemma} \label{lemlowdenscase} If $\TT$ is a worst-case Kakeya set which is not sticky (as in \eqref{notstickyall}),   and IF $\Delta_{max}(\TT_B) \lessapprox 1$,  then $\mu(\TT) \ll |\TT|^\beta$.  
\end{lemma}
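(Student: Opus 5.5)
The plan is to combine the two-scale bound \eqref{twoscales3} with the definition of $\beta$ applied to a rescaled copy of $\TT_B$. We already have \eqref{muTin}: $\mu(\TT[T_\rho]) \ll (\rho/\delta)^{2\beta}$, which follows from non-stickiness \eqref{notstickyall}. So it remains to show
\[
\mu(\TT_B) \lessapprox \rho^{-2\beta}.
\]
Multiplying the two bounds then gives $\mu(\TT) \ll (\rho/\delta)^{2\beta}\rho^{-2\beta} = \delta^{-2\beta} \approx |\TT|^\beta$, which is the claim.

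To bound $\mu(\TT_B)$, rescale $B$, a ball of radius $r = \delta/\rho$, by the factor $r^{-1}$ to the unit ball. Each short tube in $\TT_B$ has dimensions $\delta \times \delta \times \delta/\rho$, so it becomes a $\rho$-tube of length $1$. Call the rescaled family $\TT'$. The multiplicity is unchanged by rescaling, so $\mu(\TT_B) = \mu(\TT')$. Since $\Delta_{max}(\cdot)$ is a ratio of volumes, it is also scale invariant, and so the hypothesis $\Delta_{max}(\TT_B) \lessapprox 1$ gives $\Delta_{max}(\TT') \lessapprox 1$. The definition of $\beta$ in \eqref{critexp}, applied to $\rho$-tubes, then gives $\mu(\TT') \lessapprox |\TT'|^\beta$.

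We also need $|\TT'| \lessapprox \rho^{-2}$. This follows because $\Delta_{max}(\TT') \lessapprox 1$ forces $|\TT'| \lessapprox \rho^{-2}$: take $K$ to be the unit ball, so that $|\TT'|\rho^2 \lesssim \Delta(\TT',K) \lessapprox 1$. This is the same remark as the one made earlier that $\Delta_{max}(\TT) \lessapprox 1$ implies $|\TT| \lessapprox \delta^{-2}$. Putting these together yields $\mu(\TT_B) \lessapprox \rho^{-2\beta}$, as needed.

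The step most likely to need care is the bookkeeping in \eqref{twoscales3}. One must check that every point of $U(\TT)\cap B$ is counted correctly: a point lies in about $\mu(\TT_B)$ short tubes, and each short tube lies in about $\mu(\TT[T_\rho])$ long tubes. This uses the uniformity assumptions, and one must also make sure distinct $T_\rho$ do not produce identical short tubes that are double counted. A second point to watch is that ``$\ll$'' must mean a gain by a power of $\delta$ (or at least a gain larger than the $\lessapprox$ losses). This requires $\rho$ to be genuinely intermediate, so that the non-stickiness gain in \eqref{muTin} is a fixed power of $\rho/\delta$. That gain then survives the logarithmic losses from the pigeonholing.
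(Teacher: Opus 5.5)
Your proposal is correct and follows the paper's proof: apply the definition of $\beta$ to (a rescaling of) $\TT_B$, use $\Delta_{max}(\TT_B)\lessapprox 1$ to get $|\TT_B|\lessapprox \rho^{-2}$ and hence $\mu(\TT_B)\lessapprox\rho^{-2\beta}$, then combine with \eqref{twoscales3} and \eqref{muTin}. The only difference is that you spell out the rescaling to unit-length $\rho$-tubes, which the paper handles implicitly through its remark that the radius-to-length ratio of the tubes in $\TT_B$ is $\rho$.
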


\begin{proof} IF  $\Delta_{max}(\TT_B) \lessapprox 1$, then we would get  

$$\mu(\TT_B) \lessapprox |\TT_B|^\beta.$$  

The tubes of $\TT_B$ have radius $\delta$ and length $r = \delta/ \rho$, and so the ratio $\frac{\textrm{radius}(T_B)}{ \textrm{length}(T_B)}= \rho$.  Since $\Delta_{max}(\TT_B) \lessapprox 1$, it follows that $|\TT_B| \lessapprox \rho^{-2}$.  Plugging this bound into the last indented equation, we get

$$ \mu(\TT_B) \lessapprox \rho^{-2 \beta}. $$

Combining this bound with \eqref{muTin}, we get

$$ \mu(\TT)  \lessapprox \mu(\TT[T_\rho]) \mu(\TT_B) \ll (\rho/\delta)^{2 \beta} \rho^{- 2 \beta} = \delta^{-2 \beta } \approx  |\TT|^\beta. $$

\end{proof}

Now the hypothesis that $\TT_B$ has $\Delta_{max}(\TT_B) \lessapprox 1$ is a big IF (that's why I wrote IF in all caps).   The fact that $\Delta_{max}(\TT) \lessapprox 1$ does NOT imply that $\Delta_{max}(\TT_B) \lessapprox 1$.   This makes it a little surprising that Lemma \ref{lemlowdenscase} plays an important role in our proof.

\subsection{A surprising induction}

This is a key philosophical moment in the proof.   We are going to try to control $\TT_B$ using induction.   But the set of tubes $\TT_B$ need not obey $\Delta_{max}(\TT_B) \lessapprox 1$.   This makes it surprising to try to use $\TT_B$ in an inductive proof of Theorem \ref{main}.

To put this moment in context,  let us recall some more history about work on the Kakeya problem.   In \cite{BCT}, Bennett-Carbery-Tao formulated and proved a multilinear cousin of the Kakeya problem.     Their proof was simplified in \cite{G},  and the proof there is only a few pages long.   Multilinear Kakeya involves $n$ sets of tubes $\TT_j$ in $\RR^n$,  where the tubes of $\TT_j$ are approximately parallel to the $x_j$ axis.   Multilinear Kakeya is important because it is much easier than Kakeya but still has many applications - for instance in work of Bourgain-Demeter on decoupling theory \cite{BD}. 

The key feature that makes multilinear Kakeya much easier than Kakeya is that if we intersect the tubes of each $\TT_j$ with a small ball $B$,  then the resulting sets of tubes $\TT_{j,B}$ obey the hypotheses of multilinear Kakeya.   Therefore,  we can easily apply induction to study the intersections of tubes in each small ball $B$.   In contrast,  the hypotheses of the Kakeya probem do not behave well when we restrict the tubes of $\TT$ to a small ball $B$. 

People in the field (including me) tried to adapt the inductive proof of multilinear Kakeya to the original Kakeya problem and we all gave up.   Since the set $\TT_B$ does not obey the hypotheses of the Kakeya conjecture,  how can we control it by induction?

In the Wang-Zahl proof,  we assume nothing at all about the set of tubes $\TT_B$.   But no matter how $\TT_B$ behaves,  they find a way to take advantage of it and prove that $\mu(\TT) \ll | \TT |^\beta$.   There are several scenarios and we will discuss the most important scenarios and how to take advantage of each one.  This proof outline reminds me of a game that was popular in my elementary school called ``heads I win, tails you lose.''

Before turning to details,  let us think through the philosophical issue of how to apply induction to $\TT_B$.    While $\TT_B$ itself does not obey $\Delta_{max}(\TT_B) \lessapprox 1$,  Wang and Zahl manage to relate $\TT_B$ to another set of tubes $ \TT'$ with $\Delta_{max}( \TT') \lessapprox 1$,  and then we can use that $\mu( \TT') \lessapprox | \TT' |^\beta$.   As we go,  we will see how to locate this new set of tubes $ \TT'$ in various scenarios.

\subsection{Organizing the different cases}

If $\Delta_{max}(\TT_B) \lessapprox 1$,  then Lemma \ref{lemlowdenscase} gives us our goal: $\mu(\TT) \ll |\TT|^\beta$.   So we have to consider the case that $\Delta_{max}(\TT_B) \gg 1$.   By definition,  this means that there is some convex set $K \subset B$ so that $ \Delta(\TT_B, K) \gg 1. $   We consider the set $K$ that maximizes $\Delta(\TT_B, K)$.

Notice that for any set $K' \subset K$, $\Delta(\TT_B, K') \le \Delta(\TT_B, K)$.  This condition plays an important role in the story, so we give it a name.

\begin{definition} If $\tilde \TT$ is a set of tubes all contained in a convex set $K$, then $\tilde \TT$ is Frostman in $K$ if for any convex $K' \subset K$, 

$$ \Delta(\tilde \TT, K') \lessapprox \Delta(\tilde \TT, K). $$

\end{definition}

In other words, $\tilde \TT$ is Frostman in $K$ if the tubes of $\tilde \TT$ are contained in $K$ and $\Delta_{max}(\tilde \TT) \approx \Delta(\tilde \TT, K)$.  

Since we picked the set $K$ to maximize $\Delta(\TT_B, K)$,  we see that $\TT_B[K]$ is Frostman in $K$.  (Wang and Zahl picked the name Frostman because the condition is similar to the bound that appears in Frostman's lemma in geometric measure theory.)

Now the proof divides into cases depending on the shape of $K$.   Since $K$ is a convex set which contains some tubes of $\TT_B$, $K$ is essentially a rectangular box of dimensions $a \times b \times r$, where $r$ is the radius of $B$.   The proof divides into cases according to the values of $a$ and $b$.

One important case is when $K = B$.   In this section we focus on this important case.   Then in Section \ref{secothershapes} we consider other possible shapes of $K$.

\subsection{The case $K=B$}

We consider the case when $\Delta_{max}(\TT_B) \approx \Delta(\TT_B, B) \gg 1$.   In this case, $\TT_B$ is Frostman in $B$.

In this case, we first prove a lower bound on $|U(\TT_B)| \approx |U(\TT) \cap B|$ which leads to a lower bound for $|U(\TT)|$, which will imply that $\mu(\TT) \ll |\TT|^\beta$.

(Let us pause here to recall that $U(\TT)$ and $\mu(\TT)$ are closely related to each other: $\mu(\TT) = \frac{ |\TT| |T| }{|U(\TT)|}$.  Using this equation, we can translate bounds on $|U(\TT)|$ into bounds on $\mu(\TT)$ and vice versa.)

The key ingredient in the case $K=B$ is a lower bound for $|U(\tilde \TT)|$ when $\tilde TT$ is a Frostman set of tubes.  We state the lemma as a lower bound for a set of tubes in $B_1$, but we can apply it to $\TT_B$ by rescaling.

\begin{lemma} \label{lemmahighdensity} (High density lemma) Suppose that the exponent $\beta$ is as defined above.   Suppose $\tilde \TT$ is a set of $\delta$ tubes in $B_1$ which is Frostman in $B_1$.   Then

$$ |U(\tilde \TT)| \gtrapprox \left( \delta^2 |\tilde \TT| \right)^{1 - \beta}  \delta^{2 \beta}. $$

\end{lemma}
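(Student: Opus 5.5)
The plan is to reformulate the lemma as a multiplicity bound and prove it by induction on $\delta$, using Lemma \ref{lemmultmu} to split $\tilde \TT$ into a coarse family and a fine family, as in Section \ref{secstickgrain}. Write $\Delta := \Delta(\tilde \TT, B_1) \approx \delta^2 |\tilde \TT|$. Since $\mu(\tilde \TT) = |\tilde \TT||T|/|U(\tilde \TT)| \approx \Delta / |U(\tilde\TT)|$, the claim is equivalent to
$$ \mu(\tilde \TT) \lessapprox \Delta^{\beta} \delta^{-2\beta} \approx |\tilde \TT|^{\beta}. $$
So the lemma says that a Frostman family obeys the same bound \eqref{critexp} as a family with $\Delta_{max} \lessapprox 1$. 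The Frostman condition only allows clustering as dense as the ambient ball, and the point is that this costs nothing.

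As a warm-up, and to see where the difficulty lies, I would first try random sampling. Keep each tube independently with probability $p = \Delta^{-1}$ to get $\TT'$. The Frostman condition gives $\Delta(\TT', K) \lessapprox 1$ for every convex $K$: use Chernoff and a union bound over a $\delta$-net of convex sets, which has only polynomially many elements. Small $K$ containing few tubes are handled by the trivial bound. Then $|\TT'| \approx \delta^{-2}$ and \eqref{critexp} gives $\mu(\TT') \lessapprox \delta^{-2\beta}$. Under the uniformity assumption \eqref{Txmu}, and since $\mu(\tilde\TT) \gg \Delta$, a typical point of $U(\tilde \TT)$ keeps about $\mu(\tilde\TT)/\Delta$ of its tubes. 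Hence $\mu(\tilde \TT) \lessapprox \Delta\, \delta^{-2\beta}$. This is weaker than the goal by a factor $\Delta^{1-\beta}$, so sampling alone is not enough, and the real argument must use the multiscale structure.

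The main argument is an induction on scales. Choose an intermediate scale $\rho$ adapted to the Frostman family and use $\mu(\tilde \TT) \lessapprox \mu(\tilde \TT_\rho)\, \mu(\tilde \TT[T_\rho])$ from Lemma \ref{lemmultmu}, pigeonholing so that $|\tilde\TT[T_\rho]|$ is roughly constant. Both factors should again be Frostman families that are nearly as good as $\tilde\TT$.
\begin{itemize}
\item For the fine family: $\tilde \TT[T_\rho]$ lies in $T_\rho$. After the linear change of variables taking $T_\rho$ to $B_1$, it is a family of $(\delta/\rho)$-tubes. I would choose $\rho$ so that it is Frostman in $T_\rho$, using the maximizing convex set among the tubes as in the choice of $K$ above.
\item For the coarse family: the Frostman property of $\tilde\TT$ should make $\tilde \TT_\rho$ Frostman in $B_1$, since its density in any $K$ is controlled by that of $\tilde \TT$ divided by the typical number of tubes per $T_\rho$.
\end{itemize}
Applying the induction hypothesis, in the form $\mu \lessapprox |\cdot|^\beta$, to both families gives $\mu(\tilde\TT) \lessapprox |\tilde\TT_\rho|^\beta |\tilde\TT[T_\rho]|^\beta \approx |\tilde \TT|^\beta$, by the analogue of \eqref{T=inout}. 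The $\lessapprox$ losses accumulate only over $O(1)$ induction steps if each step shrinks the relevant scale by a fixed power of $\delta$.

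The main obstacle is the base case, where no intermediate scale $\rho$ gives a nontrivial splitting. One sign of trouble: a naive choice with $(\rho/\delta)^2 = \Delta$ and a completely packed fine family only yields $\mu \lessapprox \Delta^{1-\beta}\rho^{-2\beta}$. In the base case the family is spread out at every scale, and density $\Delta$ must come from many nearly disjoint "copies" of a $\Delta_{max} \lessapprox 1$ configuration. I would first try to show that the $\Delta$ random subfamilies $\TT'_1, \dots, \TT'_\Delta$ from the sampling step have almost disjoint unions, with each $|U(\TT'_j)| \gtrapprox \delta^{2\beta}$. This would recover the missing factor $\Delta^{1-\beta}$ (indeed $\Delta^{1}$) in $|U(\tilde\TT)|$. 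I would try to prove the near-disjointness with an $L^2$ estimate on $\sum_j 1_{U(\TT'_j)}$, in the spirit of the $L^2$ method. I expect this step to be where the real work lies.
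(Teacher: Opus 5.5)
Your conversion of the printed inequality is algebraically right, but it yields the target $\mu(\tilde\TT)\lessapprox|\tilde\TT|^\beta$, which is false, so the plan cannot work as stated. Since $U(\tilde\TT)\subset B_1$, one always has $\mu(\tilde\TT)\gtrsim\Delta$. For example, take all ($\approx\delta^{-4}$) $\delta$-tubes in $B_1$. This family is Frostman with $\Delta\approx\delta^{-2}$ and $\mu\approx\delta^{-2}$, whereas $|\tilde\TT|^\beta\approx\delta^{-4\beta}\le\delta^{-1}$, because $\beta\le 1/4$ by Wolff's bound.

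The exponent $1-\beta$ in the displayed volume bound is a misprint for $\beta$. The restatement at the start of Section \ref{sechighdens} and the claim $\gamma=1-\beta$ in \eqref{defgamma} give $\mu(\tilde\TT)\lessapprox\Delta^{1-\beta}\delta^{-2\beta}$, i.e.\ $|U(\tilde\TT)|\gtrapprox\Delta^{\beta}\delta^{2\beta}$. This corrected bound is sharp: fill each tube of a worst-case Kakeya family of $\rho$-tubes with all the $\delta$-tubes it contains. The result is Frostman with $\Delta\approx(\rho/\delta)^2$ and $|U|\approx\rho^{2\beta}=\Delta^\beta\delta^{2\beta}$. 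So clustering is not free. The true gain over your (correct) sampling bound $\mu\lessapprox\Delta\delta^{-2\beta}$ is only $\Delta^{\beta}$, not $\Delta^{1-\beta}$.

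In particular your base-case plan cannot succeed. Near-disjoint unions of the $\Delta$ random subfamilies would give $|U(\tilde\TT)|\gtrapprox\Delta\delta^{2\beta}$, which exceeds $|B_1|$ once $\Delta\gg\delta^{-2\beta}$. The induction hypothesis you feed into Lemma \ref{lemmultmu} is the same false bound.

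Even with the corrected target, the proposal is missing the source of the gain. Densities multiply across scales, $\Delta=\Delta(\tilde\TT_\rho,B_1)\,\Delta(\tilde\TT[T_\rho],T_\rho)$. Hence a bound of the form $\mu\lessapprox(\textrm{scale})^{-2\beta}(\textrm{density})^{\gamma}$, applied to $\tilde\TT_\rho$ and to the rescaled $\tilde\TT[T_\rho]$, multiplies back to exactly $\delta^{-2\beta}\Delta^{\gamma}$. Splitting scales alone can never push $\gamma$ below $1$. You also simply assume a scale $\rho$ exists with $\tilde\TT[T_\rho]$ Frostman. But the maximal-density convex set inside $T_\rho$ may be a plank $a\times b\times 1$ with $a\ll b$ rather than a thinner tube, and that is precisely the hard case.

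The paper's argument runs as follows:
\begin{enumerate}
\item Take a worst case for $\gamma$. When $\TT[T_\rho]$ is not Frostman, cluster it into maximal-density sets $W$.
\item Lemma \ref{lemWround} shows that in a worst case the $W$ are tubes $T_a$, because plank-shaped $W$ gain from the sharp $L^2$ slab bounds. This yields a good scale (Lemma \ref{lemfindscale}).
\item Iterating splits $[\delta,1]$ into ranges of density $\approx 1$ and blocks of closely spaced Frostman scales.
\item On each block, Lemma \ref{lemfindsticky} decomposes the family into sticky Kakeya sets. Theorem \ref{thmsticky} ($\bes=0$) then gives multiplicity $\lessapprox$ density there, instead of the $\beta$-loss. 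The gain on a block is at least (its density)$^{\beta}$, and these gains multiply to $\Delta^{\beta}$.
\end{enumerate}
The sticky Kakeya theorem is the essential input here, and your proposal never uses it.
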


Let us digest this lemma.  Recall the definition of Frostman: for any convex set $K \subset B_1$, $\Delta(\tilde \TT, K) \lessapprox \Delta(\tilde \TT, B_1)$.  If we let $K$ be one of the tubes of $\tilde \TT$, then we see that $\Delta(\tilde \TT, K) \ge 1$, and therefore $\Delta(\tilde \TT, B_1) \gtrapprox 1$.  This implies that $|\tilde \TT| \gtrapprox \delta^{-2}$.  

In the special case that $|\tilde \TT| \approx \delta^{-2}$, then we have $\Delta(\tilde \TT, B_1) \approx 1$, and so $\Delta_{max}(\tilde \TT) \approx \Delta(\tilde \TT, B_1) \approx 1$.  In this special case, we can apply the definition of $\beta$ to get $\mu(\tilde \TT) \lessapprox |\tilde \TT|^\beta \approx \delta^{-2 \beta}$, and this gives the lower bound $|U(\tilde \TT)| \gtrapprox \delta^{2 \beta}$.  To summarize, if $\tilde \TT$ is Frostman in $B_1$ and $|\tilde \TT| \approx \delta^{-2}$, then $|U(\tilde \TT)| \gtrapprox \delta^{2 \beta}$.  

The main content of Lemma \ref{lemmahighdensity} is that if $|\tilde \TT| \gg \delta^{-2}$, then $|U(\tilde \TT)| \gg \delta^{2 \beta}$.  Equivalently, if $\tilde \TT$ is Frostman in $B_1$ and $\Delta_{max}(\tilde \TT) \gg 1$, then $|U(\tilde \TT)| \gg \delta^{2 \beta}$.  

The proof of Lemma \ref{lemmahighdensity} is complex, and we will discuss it in Section \ref{sechighdens}.

Now we return to $\TT_B$.  We are considering the case when $\TT_B$ is Frostman in $B$ and $\Delta_{max}(\TT_B) \gg 1$.  Rescaling and applying Lemma \ref{lemmahighdensity} to $\TT_B$ gives the bound

\begin{equation} \label{UTBbound} |U(\TT) \cap B| =  |U(\TT_B)| \gg  \left( \frac{\delta}{r}  \right)^{2 \beta} |B|.
\end{equation}

In words, the high density lemma tells us that if $\TT_B$ is Frostman and $\Delta_{max}(\TT_B) \gg 1$,  then $U(\TT)$ fills a surprisingly large fraction of $B$.   We will use this fact to show that $U(\TT)$ is surprisingly large,  which implies that $\mu(\TT) \ll |\TT|^\beta$.

\subsection{Density of the Kakeya set in small balls} \label{subsecdensity}

Upper bounds for the multiplicity of the Kakeya set are closely related to lower bounds for the volume of the Kakeya set.  Recall that we defined the multiplicity by

$$ \mu(\TT) = \frac{ |\TT| |T| }{|U(\TT)|}. $$

Since we are assuming $|\TT| \approx \delta^{-2}$,  

\begin{equation}
\mu(\TT) \ll |\TT|^\beta \textrm{ is equivalent to } |U(\TT)| \gg \delta^{2 \beta}. 
\end{equation}

If $B_r$ is a ``typical'' ball of radius $r$ intersecting the Kakeya set,  then we can write

\begin{equation} \label{prodU} | U(\TT) | \approx | U(\TT_r) | \cdot \frac{ |U(\TT) \cap B_r| }{|B_r|}. \end{equation}

We will refer to $ \frac{ |U(\TT) \cap B_r| }{|B_r|}$ as the density of the Kakeya set in $B_r$.  

Given that $\Delta_{max}(\TT) \lessapprox 1$ and $| \TT | \approx \delta^{-2}$,  it is not hard to show that for all $\delta \le r \le 1$ we have

\begin{equation}   | U(\TT_r) | \gtrapprox r^{2 \beta}. \end{equation}

In the special case when $\TT_B$ is Frostman and $\Delta_{max}(\TT_B) \gg 1$, the high density lemma gives us \eqref{UTBbound}: 

\begin{equation} \label{localvol}   \frac { |U(\TT) \cap B_r|}{|B_r|} \gg  \left( \frac{\delta}{r}  \right)^{2 \beta}. \end{equation}

Putting together the last three indented equations gives $|U(\TT)| \gg \delta^{2 \beta}$, which is equivalent to $\mu(\TT) \ll | \TT |^\beta$.  

\vskip10pt

Let us take stock.  If $\TT$ is not sticky (at all scales), then our goal is to prove that $\mu(\TT) \ll |\TT|^\beta$.  So far we have achieved this goal in two cases:
the case when $\Delta_{max}(\TT_B) \lessapprox 1$ and the case when $\Delta_{max}(\TT_B) \approx \Delta(\TT_B, B) \gg 1$.   Recall that we defined $K \subset B$ to the set that maximizes
$\Delta(\TT_B, K)$.  Our two cases are the case when $K$ is a tube $T_B \in \TT_B$ and the case when $K=B$.   In the next section we discuss other shapes of $K$.

\section{Other possible shapes of $K$} \label{secothershapes}

Suppose that $\Delta_{max}(\TT_B) = \Delta(\TT_B,  K)$.   In general $K$ is a convex set of dimensions $a \times b \times r$.   So far we have discussed the two extreme cases: $K = T_B$ and $K = B$.   Now we turn to various intermediate cases.   By doing some pigeonholing,  we can assume that there is a set $\KK$ of convex sets $K \subset B$,  all with the same dimensions,  so that $\Delta(\TT_B, K) \approx \Delta_{max}(\TT)$ for each $K \in \KK$ and so that each $T_B \in \TT_B$ lies in one $K \in \KK$. 

Essentially this set $\KK$ is formed by using the greedy algorithm.   First we find the convex set $K_1$ which maximizes $\Delta(\TT_B, K_1)$.   We call $K_1$ the densest convex set for $\TT_B$.   Then we remove $\TT_B[K_1]$ from $\TT_B$,  and we let $K_2$ be the densest convex set for the remaining tubes.   We continue in this way until all (or most) of the tubes of $\TT_B$ belong to one of the sets $K \in \KK$.

From the nature of this procedure,  it follows that $\mu(\KK) \lessapprox 1$.   The reason is that if many sets $K \in \KK$ pack into a larger convex set $L$,  then we would have $\Delta(\TT_B, L) \gg \Delta(\TT_B, K)$,  and so the greedy algorithm would have chosen $L$ instead of $K$.  Also, for each $K \in \KK$, $\TT_B[K]$ is Frostman.

The argument divides into cases according to the shape of $K$.  

\vskip10pt

{\bf Case 1: Fat planks.}  

\vskip10pt

If $a \gg \delta$,  then we can adapt the high density argument from the last subsection to show that $U(\TT) \cap K$ fills a surprisingly large fraction of $K$ and hence $U(\TT) \cap B_a$ fills a surprisingly large fraction of $B_a$.  This requires some technical care,  but the high level ideas are the same as in the case $K=B$ above,  and the most ingredient is the high density lemma,  Lemma \ref{lemmahighdensity}.

\vskip10pt

This leaves the case when $a \approx \delta$,  so $K$ has dimensions roughly $\delta \times b \times r$.   When we study how the tubes $\TT_B[K]$ sit inside of $K$,  we essentially have a two-dimensional Kakeya problem.   Using the $L^2$ method,  it follows that $|U(\TT_B[K])| \gtrapprox |K|$, and so $\TT_B[K]$ essentially fills $K$.  

If $b \ll r$, then we call $K$ a thin plank, and if $b \approx r$, then we call $K$ a thin slab.  In the thin plank case, we have to study how the planks of  $\KK$ intersect each other.  Each plank $K \in \KK$ has a two-dimensional tangent plane,  spanned by the two longest axes of $K$.   When two planks intersect,  we call the intersection tangential if the two tangent planes are equal and we call the intersection transverse if the two tangent planes are transverse.    

\vskip10pt

{\bf Case 2: Thin planks intersecting transversely.}

\vskip10pt

When most intersections are transverse,  then the $L^2$ method gives strong bounds.   If we intersect a plank with a ball of radius $b$,  we get a slab.  Inside such a ball $B_b$,  we would see several slabs intersecting transversely.   Using the $L^2$ method,  it follows that $U(\KK)$ essentially fills $B_b$ and so $U(\TT)$ essentially fills $B_b$.  Then we can finish the proof as in Subsection \ref{subsecdensity}.

\vskip10pt

{\bf Case 3: Thin planks intersecting tangentially.}

\vskip10pt

In this case,  we will relate $\mu(\TT)$ to $\mu(\KK)$.   We will bound $\mu(\KK)$ by changing coordinates so that the planks become a set of tubes $\TT'$,  and we bound $\mu(\TT')$ using induction.   Here we sketch how to relate $\KK$ to $\TT'$.  

If all intersections are tangential,  then all the planks $K$ that intersect a given plank $K_0$ lie in a slab $S$ of dimensions $\frac{\delta r}{b} \times r \times r$.    In fact, each plank $K$ that intersects $K_0$ lies in $S$ and also has tangent plane close to the tangent plane of $S$.   We let $\KK_S$ be the set of planks $K$ so that $K \subset S$ and the tangent plane of $K$ is close to that of $S$.   When most intersections are tangential,  then $\mu(\KK) \approx \mu(\KK_S)$.

Now we can change coordinates so that $S$ becomes the unit cube and so each $K \in \KK_S$ becomes a tube $T'$ in the unit cube.   

$$ \textrm{ Planks $K \in \KK_S$ } \longleftrightarrow \textrm{ tubes in $B_1$}. $$

Figure \ref{fig:corr_plank_tube} illustrates this correspondence:

    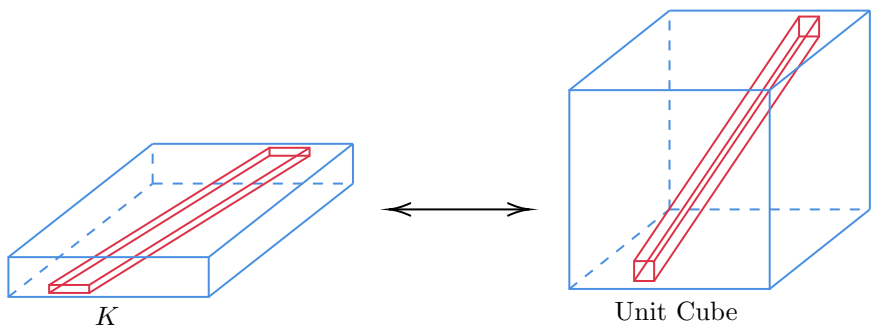
\begin{figure} [h!]
      \begin{center}
        \input{plank_tube_correspondence_diagram}
      \end{center}
      \caption{Correspondence between planks in $K$ and tubes in $B_1$}\label{fig:corr_plank_tube}
    \end{figure}

In Figure \ref{fig:corr_plank_tube}, the red plank on the left is a plank $K \subset S$.   Under the linear change of variables,  the plank $K$ corresponds to the red tube on the right.

We let $\TT'$ be the set of tubes on the right.   Then we have $\mu(\KK) \approx \mu(\KK_S) \approx  \mu(\TT')$.   We also have $\Delta_{max}(\TT') \approx \Delta_{max}(\KK_S) \le \Delta_{max}(\KK) \lessapprox 1$.    Therefore we can bound $\mu(\TT') \lessapprox |\TT'|^\beta$. 

With some additional computation,  this leads to a bound for $\mu(\TT)$ which gives the desired improvement $\mu(\TT) \ll |\TT|^\beta$.    This computation is similar to the proof of Lemma \ref{lemlowdenscase},  although a little more complicated.  We omit the details.

\vskip10pt

{\bf Case 4: Thin slabs.}

\vskip10pt

Finally we come to the thin slab case when $K$ has dimensions roughly $\delta \times r \times r$.  In the thin slab case,  we can get very strong bounds as long as $r$ is close to 1.   If $r$ is close to 1,  then the assumption $\Delta_{max}(\TT) \lessapprox 1$ guarantees that not too many tubes of $\TT$ can heavily intersect $K$,  and so it will follow that there are many thin slabs $K$.   The intersections of thin slabs with each other are well controlled by the $L^2$ method.  Therefore, if $r$ is close to 1, then $|U(\KK)|$ must be close to 1, and hence $|U(\TT)|$ must be close to 1 also.

\vskip10pt

Recall from the start of Section \ref{subsecsmallball} that we chose an angle $\rho \in [\delta, 1]$ with $|\TT_\rho| \gg \rho^{-2}$,  and the ball $B$ has radius $r = \delta/\rho$.   In order to make $r$ close to 1,  we need to choose $\rho$ close to $\delta$.   For this reason,  we need to know that $\TT$ is not sticky at scales $\rho$ very close to $\delta$.   Also,  when we fill in the details in Case 3,  we actually need to know that $\TT$ is not sticky at all scales $\rho$ with $\delta \ll \rho \ll 1$.  This was the reason that we focused on the not-sticky-at-all-scales case above.

\subsection{How do we reduce to the not-sticky-at-all-scales case?}

We saw in the argument above that it was important to reduce to the not-sticky-at-all-scales case.   In this subsection,  we explain how to do that.   We start by recalling the sticky case,  the not-sticky case,  and the not-sticky-at-all-scales case.

Recall that $\TT$ is a set of $\delta$-tubes in $B_1$ with $\Delta_{max}(\TT) \lessapprox 1$ and that $| \TT | \approx \delta^{-2}$. 

The sticky case means that for every $\rho \in [\delta,  1]$,  $\Delta_{max}(\TT_\rho) \lessapprox 1$.   Since $| \TT | \approx \delta^{-2}$,  this is equivalent to $| \TT[T_\rho]| \approx (\delta/\rho)^{-2}$.

If $\TT$ is not sticky,  it means that there is some $\rho \in [\delta, 1]$ so that $| \TT[T_\rho]| \ll (\delta/\rho)^{-2}$.   Such a $\rho$ must lie in the range $\delta \ll \rho \ll 1$.

We say that $\TT$ is not-sticky-at-all-scales if $| \TT[T_\rho]| \ll (\delta/\rho)^{-2}$ for every $\rho$ in the range $\delta \ll \rho \ll 1$.

Here is the rough idea how to reduce the not-sticky case to the not-sticky-at-all-scales case.   Suppose that there is some scale $\rho$ so that $| \TT[T_\rho]| \approx (\delta / \rho)^{-2}$.   It follows that $\Delta_{max}(\TT_\rho) \lessapprox 1$.   Now we can bound

$$ \mu(\TT) \lessapprox \mu(\TT[T_\rho]) \mu(\TT_\rho), $$

\noindent which reduces our original problem to two similar problems at smaller scales.   We try to keep reducing in this way.   If one of the smaller problems is not-sticky-at-all-scales  then we are stuck and we cannot reduce further.   Otherwise we can reduce further.   If we can keep reducing in this way to very small problems,  it means that our original set of tubes $\TT$ was sticky,  and we can handle it using the sticky Kakeya theorem.   Otherwise,  we get stuck with a problem that is not-sticky-at-all-scales  and we can handle it using the argument we have sketched in the last two sections.



\section{The high density lemma} \label{sechighdens}

We now come to the last ingredient in the proof that a worst case Kakeya set must be sticky: the proof of the high density lemma.  The Frostman condition plays a key role in the high density lemma,  so we recall the Frostman condition and put it in a slightly more general context.    Suppose that $\WW$ is a set of convex sets $W$ all lying in a given convex set $U$.   We say that $\WW$ is Frostman in $U$ if $\Delta_{max}(\WW) \approx \Delta(\WW,  U)$.  In the statement of the high density lemma, $\WW$ will be a set of tubes, but later in our discussion we will see more general convex sets.

\begin{lemma*} (High density lemma) Suppose that the exponent $\beta$ is as defined above.   Suppose $\TT$ is a set of $\delta$ tubes which is Frostman in $B_1$.   Then 

$$ |U(\TT)| \gtrapprox \left( \delta^2 |\TT| \right)^{1 - \beta}  \delta^{2 \beta}. $$

Equivalently,

$$ \mu(\TT) \lessapprox \left( \delta^2 |\TT| \right)^{1 - \beta}  \delta^{-2 \beta}.$$

\end{lemma*}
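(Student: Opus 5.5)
We will prove the high density lemma by induction on the density $\Delta := \Delta(\TT, B_1) \approx \delta^2|\TT|$. We reduce it to the definition of $\beta$, applied to a smaller-scale set of tubes that does obey the convex Wolff axioms. If $\Delta \lessapprox 1$, the Frostman condition gives $\Delta_{max}(\TT) \lessapprox 1$. The definition of $\beta$ then gives $\mu(\TT) \lessapprox |\TT|^\beta \approx \delta^{-2\beta}$, which is the claim. So assume $\Delta \gg 1$.

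The first step is to find a coarse scale at which the Frostman set of tubes behaves like a set with $\Delta_{max} \lessapprox 1$. Since $\TT$ is Frostman in $B_1$, for every $\rho$ the thickened family $\TT_\rho$ inherits a Frostman-type bound: each $T_\rho$ contains $|\TT[T_\rho]| \lessapprox \Delta (\rho/\delta)^2$ tubes. Choose $\rho$ so that $\rho^{-2} \approx |\TT_\rho|$ is as large as possible while $\TT_\rho$ still has $\Delta_{max}(\TT_\rho) \lessapprox 1$. The natural choice is the scale $\rho = \delta \Delta^{1/2}$, where a $\rho$-tube has room for exactly $\Delta(\rho/\delta)^2\cdot$(const) thin tubes. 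We then split using Lemma \ref{lemmultmu}:
$$\mu(\TT) \lessapprox \mu(\TT_\rho)\,\mu(\TT[T_\rho]).$$
The inner family $\TT[T_\rho]$, after rescaling $T_\rho$ to $B_1$, is again a family of $(\delta/\rho)$-tubes. By the Frostman hypothesis, its maximal density is comparable to its density in $T_\rho$. If $\TT[T_\rho]$ fills $T_\rho$ with density $\approx 1$, then the definition of $\beta$ bounds $\mu(\TT[T_\rho]) \lessapprox (\rho/\delta)^{2\beta}$. The outer family has $\mu(\TT_\rho) \lessapprox \rho^{-2\beta}$. Multiplying gives $\delta^{-2\beta}$, which is better than the target by the factor $\Delta^{1-\beta}$. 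The target allows $\mu$ to be as large as $\Delta^{1-\beta}\delta^{-2\beta}$, so we have room to spare. The real difficulty is that the densities at intermediate scales need not distribute this cleanly.

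To handle the general case, I would pigeonhole the scale $\rho$ together with the densest convex sets. For each $\rho$, the maximal density of $\TT_\rho$ is attained on some family $\WW$ of convex sets. The greedy decomposition from Section \ref{secothershapes} produces $\WW$ with $\mu(\WW) \lessapprox 1$, and with $\TT[W]$ Frostman in each $W \in \WW$. We then apply the induction hypothesis inside each $W$ after an affine rescaling. If $W$ is a fat tube or a ball, the rescaled family is again Frostman with strictly smaller density ratio, so induction on $\Delta$ applies. If $W$ is a plank or a slab, the tubes inside $W$ form an essentially two-dimensional problem, and the $L^2$ method gives $|U(\TT[W])| \gtrapprox |W|$, which is even stronger. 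Summing over $\WW$ with $\mu(\WW) \lessapprox 1$ converts the local volume bounds into the global bound. The exponents combine multiplicatively: a factor $(\cdot)^{1-\beta}$ from the density and a factor $(\cdot)^{2\beta}$ from the scale, exactly as in the proof of Lemma \ref{lemlowdenscase}. I would check at this point that the exponent bookkeeping closes.

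The main obstacle is the case where the densest sets $W$ are convex sets of intermediate shape, such as $a\times b\times 1$ boxes with $\delta \ll a \ll b \ll 1$. There the rescaled family is a family of anisotropic convex sets, not tubes, so neither the definition of $\beta$ nor the $L^2$ method applies directly. My first attempt would be to generalize the lemma to Frostman families of congruent convex sets $W$ inside a convex $U$, and to run the induction in that wider class, using linear changes of variables that map boxes to tubes as in Case 3. The thin-plank configuration, where the planks intersect tangentially, is where I expect the bookkeeping to be hardest.
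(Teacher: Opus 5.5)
\textbf{Main gap: nothing in your scheme gains over the trivial bound.} Your argument never improves on \eqref{trivbound}, $\mu(\TT)\lessapprox(\delta^2|\TT|)\,\delta^{-2\beta}$. The missing ingredient is the sticky Kakeya theorem, Theorem \ref{thmsticky}, which the paper's proof uses as its crucial input.

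Your recursion has three inputs: the definition of $\beta$ for families with $\Delta_{max}\lessapprox1$, the $L^2$ bound for planks and slabs, and the splitting $\mu(\TT)\lessapprox\mu(\TT[W])\,\mu(\WW)$. This recursion is consistent with every exponent $\gamma\in[1-\beta,1]$ in \eqref{defgamma}. Densities and scale ratios multiply across the splitting, so bounds of the form $(\textrm{density})^{\gamma}(\textrm{ratio})^{2\beta}$ on the two factors just reproduce $\Delta^{\gamma}\delta^{-2\beta}$. The base case $\Delta\lessapprox1$ does not see $\gamma$ at all. The plank and slab cases do gain, but Lemma \ref{lemWround} shows that a worst case avoids them.

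What remains is the configuration where, at every scale, the densest sets are tubes and each $\TT_{\rho_j}[T_{\rho_{j-1}}]$ is Frostman. The model case is a union of $\Delta$ sticky Kakeya sets. There your first step finds no useful $\rho$: with the pieces in general position, $\Delta_{max}(\TT_\rho)\lessapprox1$ holds only for $\rho\approx1$. Applying the definition of $\beta$ to each piece gives only $\mu\lessapprox\Delta\,\delta^{-2\beta}$, i.e.\ $\gamma=1$.

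\textbf{How the paper gets the gain.} It uses Lemmas \ref{lemWround} and \ref{lemfindscale} to build a sequence of scales in which every step is Frostman and is either tiny or of density one. On a block of tiny steps, Lemma \ref{lemfindsticky} splits the family into sticky Kakeya sets, each with $\mu\lessapprox1$. So a block with scale ratio $R$ and density $D\le R^2$ contributes $D$ rather than $DR^{2\beta}$, a gain of $R^{2\beta}\ge D^{\beta}$. The density-one steps contribute only their $R^{2\beta}$. Multiplying over all blocks gives exactly $\Delta^{1-\beta}\delta^{-2\beta}$.

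\textbf{Errors in the intermediate steps.}

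First, your heuristic: take $\rho=\delta\Delta^{1/2}$ with $\TT_\rho$ of density one. Then $|\TT[T_\rho]|\approx\Delta(\rho/\delta)^2=(\rho/\delta)^4$, so $\TT[T_\rho]$ is essentially all $\delta$-tubes in $T_\rho$. Its density is $\Delta$, not $1$, and $\mu(\TT[T_\rho])\approx(\rho/\delta)^2=\Delta$. Multiplying by $\mu(\TT_\rho)\lessapprox\rho^{-2\beta}$ gives exactly $\Delta^{1-\beta}\delta^{-2\beta}$. This is the extremal example for the lemma, so there is no room to spare.

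Second, $\TT[T_\rho]$ need not be Frostman in $T_\rho$ when $\Delta(\TT,T_\rho)\ll\Delta$. The paper stresses this point.

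Third, the greedy family $\WW$ does not satisfy $\mu(\WW)\lessapprox1$. Sets $W$ coming from different $T_\rho$ overlap, and $\WW$ is a Frostman family in $B_1$ whose density is typically $\gg1$. Bounding $\mu(\WW)$ is half of the problem. That is why the paper extends \eqref{defgamma} to Frostman families of convex sets in all cases; you propose this generalization only for intermediate shapes.

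Fourth, $\Delta(\TT[W],W)=\Delta_{max}(\TT[T_\rho])$ can be $\approx\Delta$, so induction on $\Delta$ is not available. You would have to induct on scales instead, which leads straight back to the main gap above.
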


To digest this lemma we start with the case when $|\TT| \sim \delta^{-2}$.   If $|\TT| \sim \delta^{-2}$,  then $\Delta(\TT,  B_1) \sim 1$.   Since $\TT$ is Frostman,  $\Delta_{max}(\TT) \approx \Delta(\TT,  B_1) \sim 1$,  and so $|U(\TT)| \gtrapprox \delta^{2 \beta}$.    This matches the conclusion of the high-density lemma when $|\TT| \sim \delta^{-2}$.    The content of the high density lemma is that when $\TT$ is Frostman and $|\TT| \gg \delta^{-2}$,  $|U(\TT)|$ is much bigger than $\delta^{2 \beta}$.   

If $\TT$ is Frostman with $|\TT| \gg \delta^{-2}$,  it is easy to prove that $|U(\TT)| \gtrapprox \delta^{2 \beta}$.    Randomly decompose $\TT$ as a disjoint union $\TT = \sqcup_j \TT_j$,  where $| \TT_j | \sim \delta^{-2}$.   Since $\TT$ obeys the Frostman condition,  it is not hard to check that $\TT_j$ also obeys the Frostman condition.  Since $|\TT_j| \sim \delta^{-2}$,  we checked above that $|U(\TT_j)| \gtrapprox \delta^{2 \beta}$,  and so 

\begin{equation} \label{trivbound}
|U(\TT)| \gtrapprox |U(\TT_j)| \gtrapprox \delta^{2 \beta}.
\end{equation}

Even a small improvement on this trivial bound is enough to power the inductive argument in the last sections.   If (\ref{trivbound}) were sharp,  it would mean that for each $j$,  $|U(\TT)| \approx |U(\TT_j)|$.   This sounds intuitively unlikely: when $|\TT|$ is far bigger than $|\TT_j|$, we might expect $|U(\TT)|$ to be at least a little bigger than $|U(\TT_j)|$.   However,  it is not easy to prove this. 

In the proof,  we will work with the formulation involving $\mu(\TT)$.  The two formulations are equivalent because of the definition $\mu(\TT) = \frac{ |\TT| |T| }{|U(\TT)|}$.  Let $\gamma$ be the smallest exponent so that,  if $\TT$ is a Frostman set of tubes in $B_1$,  then

\begin{equation} \label{defgamma} \mu(\TT) \lessapprox (\delta^{-2})^\beta (\delta^2 | \TT|)^{\gamma}. \end{equation}

\noindent So Lemma \ref{lemmahighdensity} says that $\gamma = 1 - \beta$.   To power the inductive argument in the last sections,  we just need to prove that $\gamma < 1 $ (assuming that $\beta > 0$).   We say that $\TT$ is a worst-case example for Lemma \ref{lemmahighdensity} if

\begin{equation} \label{worstcase} \mu(\TT) \approx (\delta^{-2})^\beta (\delta^2 | \TT|)^{\gamma}. \end{equation}

\subsection{Looking for sticky Kakeya sets}

A crucial input to the proof is the sticky Kakeya theorem.   One scenario is that $\TT$ contains a sticky Kakeya set $\TT'$.   In this case,  $|U(\TT)| \ge |U(\TT')| \gtrapprox 1$,  and we are done.   When does $\TT$ contain a sticky Kakeya set?  Recall the definition of a sticky Kakeya set.   A set of tubes $\TT$ is a sticky Kakeya set if:

\begin{itemize}

\item $|\TT_\rho| \approx \rho^{-2}$ for all $\rho \in [\delta, 1]$

\item $\Delta_{max}(\TT) \lessapprox 1$.

\end{itemize}

To look for a sticky Kakeya set,  it helps to consider a dense sequence of scales $1 = \rho_0 > \rho_1 > ...  > \rho_N = \delta$.   We say the sequence of scales is dense if each quotient $\frac{\rho_{j-1}}{\rho_j}$ is very small.  We let $ \TT_{\rho_j} [ T_{\rho_{j-1}}]$ be the set of all $T_{\rho_j} \in \TT_{\rho_j}$ lying in the thicker tube $T_{\rho_{j-1}} \in \TT_{\rho_{j-1}}$.   The definition of sticky can be rephrased in terms of these sets $\TT_{\rho_j} [T_{\rho_{j-1}}]$.   If the sequence of scales is dense enough,  then $\TT$ is sticky if and only if for every $j$,

\begin{itemize}

\item $\left| \TT_{\rho_j} [T_{\rho_{j-1}}] \right| \approx \left( \frac{ \rho_{j-1} }{\rho_j} \right)^2$

\item $\Delta_{max} ( \TT_{\rho_j} [ T_{\rho_{j-1}}] ) \lessapprox 1$.    Equivalently,  $\TT_{\rho_j} [ T_{\rho_{j-1}} ]$ is Frostman.

\end{itemize}

Not every Frostman set of tubes in $B_1$ contains a sticky Kakeya set $\TT'$.   From the characterization of sticky Kakeya sets above,  we see that if $\TT$ does contain a sticky Kakeya set,  then for each $j$,  $\TT_{\rho_j} [T_{\rho_{j-1}}]$ must contain a Frostman set of tubes.   This can fail. 
On the other hand,  if each set $ \TT_{\rho_j} [T_{\rho_{j-1}}]$ is Frostman,  then $\TT$ does contain a sticky subset $\TT'$.   We state this as a lemma.

\begin{lemma} \label{lemfindsticky} If $\TT_{\rho_j}[ T_{\rho_{j-1}} ] $ is Frostman in $T_{\rho_{j-1}}$ for each $j$ and each $T_{\rho_{j-1}} \in \TT_{\rho_{j-1}}$,  then $\TT$ contains a subset $\TT'$ which is a sticky Kakeya set.   In fact,  we can even decompose $\TT$ as $\TT = \sqcup_j \TT_j$ where each $\TT_j$ is a sticky Kakeya set.
\end{lemma}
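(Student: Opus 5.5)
The plan is to build the sticky subsets by a random (or greedy) selection performed scale by scale, from coarse to fine. For each $j$ and each $T_{\rho_{j-1}} \in \TT_{\rho_{j-1}}$, the set $\TT_{\rho_j}[T_{\rho_{j-1}}]$ is Frostman in $T_{\rho_{j-1}}$. After the linear change of variables that maps $T_{\rho_{j-1}}$ to the unit ball, it becomes a Frostman set of $(\rho_j/\rho_{j-1})$-tubes in $B_1$. Its cardinality $M_j$ is $\gtrapprox (\rho_{j-1}/\rho_j)^2$ (density at least $1$, as noted after Lemma \ref{lemmahighdensity}), and by uniformity $M_j$ does not depend on the choice of $T_{\rho_{j-1}}$. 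So the first step is a single-scale version of the statement: a Frostman set $\WW$ of $\sigma$-tubes in $B_1$ with $|\WW| = M \gtrapprox \sigma^{-2}$ can be partitioned into $\approx M\sigma^2$ pieces $\WW_i$, each with $|\WW_i| \approx \sigma^{-2}$ and $\Delta_{max}(\WW_i) \lessapprox 1$. This is the same random decomposition used for \eqref{trivbound}. Put each tube of $\WW$ into a uniformly random class $i \in \{1,\dots, M\sigma^2\}$. For a convex $K$, the expected number of tubes of $\WW_i$ in $K$ is $|\WW[K]|/(M\sigma^2)$. The Frostman condition gives $|\WW[K]| |T_\sigma| \lessapprox \Delta(\WW,B_1)|K| \approx M\sigma^2 |K|$, so this expectation is $\lessapprox |K|/|T_\sigma|$, i.e.\ density $\lessapprox 1$. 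Chernoff bounds plus a union bound over a $\sigma$-net of essentially distinct convex sets (polynomially many in $\sigma^{-1}$) make this hold simultaneously for all $K$ and all $i$, at the cost of a $\log$ factor, which is harmless inside $\lessapprox$.

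Next I would assemble these across scales. Think of $\TT$ as a tree: the vertices at level $j$ are the tubes of $\TT_{\rho_j}$, and the children of $T_{\rho_{j-1}}$ are $\TT_{\rho_j}[T_{\rho_{j-1}}]$, with leaves the $\delta$-tubes. At every vertex, the single-scale step gives a partition of its children into $m_j \approx M_j(\rho_j/\rho_{j-1})^2$ sticky-sized Frostman classes. A sticky subset is produced by a consistent choice: keep one class of children at the root, then for each kept child keep one class of its children, and so on. To obtain a full partition of $\TT$ rather than just one subset, label every vertex at level $j$ by its class index in $\{1,\dots,m_j\}$. Each leaf then carries a label vector $(i_1,\dots,i_N)$, and $\TT_{\vec i}$ is the set of leaves with a given label vector. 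This is a disjoint decomposition $\TT = \sqcup_{\vec i} \TT_{\vec i}$. By construction, for every $j$ and every tube $T_{\rho_{j-1}}$ of $(\TT_{\vec i})_{\rho_{j-1}}$, the children set has $\approx (\rho_{j-1}/\rho_j)^2$ elements with $\Delta_{max} \lessapprox 1$. By the characterization of stickiness stated just before the lemma, each $\TT_{\vec i}$ is a sticky Kakeya set. Note that the count $|\TT_{\vec i}| \approx \prod_j (\rho_{j-1}/\rho_j)^2 = \delta^{-2}$ comes out correctly.

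The main obstacle is the bookkeeping of losses across scales. Each single-scale step loses a factor $\lessapprox 1$ (a power of $\log(\rho_{j-1}/\rho_j)$, plus pigeonholing losses from non-uniform $M_j$). These losses multiply over the $N$ scales, so the scale sequence must be chosen with $N$ bounded in terms of $\epsilon$: take $\rho_{j-1}/\rho_j = \delta^{-\epsilon^2}$, say. Then the accumulated loss is $\delta^{-O(\epsilon)}$, which is acceptable under $\lessapprox$, and the ratios are still dense enough for the characterization of stickiness to apply. A secondary point is that stickiness also requires the Frostman condition at intermediate scales, for convex sets not aligned with the tree. I would handle this as the stated characterization does: for a dense enough sequence of scales, controlling each $\TT_{\rho_j}[T_{\rho_{j-1}}]$ suffices, and I would take that characterization as given.
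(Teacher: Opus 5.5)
Your proposal is correct and follows the same route as the paper: select coarse-to-fine inside each Frostman set $\TT_{\rho_j}[T_{\rho_{j-1}}]$, using random subsets for one sticky set or random decompositions for the partition, with pieces of size $\approx (\rho_{j-1}/\rho_j)^2$ that stay Frostman, and then apply the multiscale characterization of stickiness. You add detail the sketch leaves implicit: the Chernoff and union-bound reason random pieces remain Frostman, the tree-labeling that assembles per-vertex partitions into a global decomposition $\TT = \sqcup_{\vec i}\TT_{\vec i}$, and the accounting of losses over the $N$ scales.
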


\begin{proof} [Proof sketch]  Since $\TT_{\rho_1}$ is Frostman,  $|\TT_{\rho_1}| \gtrapprox \rho_1^{-2}$.    Choose a random subset $\TT'_{\rho_1} \subset \TT_{\rho_1}$ with $|\TT'_{\rho_1}| \sim \rho_1^{-2}$.   Then $\TT'_{\rho_1}$ is also Frostman.

For each $T_{\rho_1} \in \TT'_{\rho_1}$,  note that $\TT_{\rho_2}[T_{\rho_1}]$ is Frostman and so $|\TT_{\rho_2} [T_{\rho_1}] \gtrapprox (\rho_1 / \rho_2)^2$.    Choose a random subset $\TT'_{\rho_2}[T_{\rho_1}] \subset \TT_{\rho_2} [T_{\rho_1}]$ with $|\TT'_{\rho_2}[T_{\rho_1}] | \sim (\rho_1/\rho_2)^2$.   Then $\TT'_{\rho_2}[T_{\rho_1}]$ is also Frostman.   We set $\TT'_{\rho_2} = \cup_{\rho_1 \in \TT'_{\rho_1}} \TT'_{\rho_2} [ T_{\rho_1}]$.  

Proceeding in this way,  we define $\TT'$,  and $\TT'$ is sticky because of the criterion above.  

If we choose random decompositions instead of just random subsets then we get a decomposition $\TT = \sqcup_j \TT_j$ where each $\TT_j$ is sticky.

\end{proof}

This raises the question whether we can find a dense sequence of scales $1 = \rho_0 > \rho_1 > ... > \rho_N = \delta$ so that for each $j$,  $ \TT_{\rho_j} [T_{\rho_{j-1}}]$ is Frostman in $T_{\rho_{j-1}}$.    We call such a sequence a good sequence of scales.  We can try to build a good sequence of scales by adding one scale at a time.   The process boils down to asking: is there a scale $\rho$ with $1 \gg \rho \gg \delta$ so that $\TT_\rho$ is Frostman and $\TT[T_\rho]$ is Frostman?   Call such a $\rho$ a good scale.

Since $\TT$ is Frostman in $B_1$,   it follows that $\TT_\rho$ is also Frostman in $B_1$.  The reason is that if $\Delta(\TT_\rho, K) \gg \Delta(\TT_\rho, B_1)$,  then it would follow that $\Delta(\TT, K) \gg \Delta(\TT,  B_1)$.  

But $\TT[T_\rho]$ is not necessarily Frostman.   Since $\TT$ is Frostman in $B_1$, we do know that for any convex set $K$,  $\Delta(\TT, K) \lessapprox \Delta(\TT,  B_1)$.   But if $\Delta(\TT,  T_\rho) \ll \Delta(\TT,  B_1)$,  then there could be some $K \subset T_\rho$ with $\Delta(\TT, K) \gg \Delta(\TT,  T_\rho)$.    

If $\TT[T_\rho]$ is Frostman, then we have a good scale and we can start to build a good sequence of scales which would lead to a sticky Kakeya set $\TT' \subset \TT$.  So we need to analyze the case when $\TT[T_\rho]$ is not Frostman and find some useful structure there.

\subsection{Analyzing the case that $\TT[T_\rho]$ is not Frostman}

If $\TT[T_\rho]$ is not Frostman, then it means by definition that there is some subset $W \subset T_\rho$ so that $\Delta(\TT[T_\rho], W) \gg \Delta(\TT[T_\rho], T_\rho)$.   As in Section \ref{secothershapes},  it is helpful to organize $\TT[T_\rho]$ by choosing sets $W$ that maximize $\Delta(\TT[T_\rho], W)$.  As in the beginning of Section \ref{secothershapes}, we can find a set $\WW(T_\rho)$ of such maximal $W$ by choosing them one at a time until each tube $T \in \TT[T_\rho]$ lies in $\approx 1$ $W \in \WW(T_\rho)$.  

We can assume that each $W \in \WW(T_\rho)$ has roughly the same dimensions: each $W$ has dimensions roughly $a \times b \times 1$, where $\delta \le a \le b \le \rho$.  We will see that if $\TT$ is a worst-case Kakeya set, then it strongly constrains the geometry of $W$.  

\begin{lemma} \label{lemWround} If $\TT$ is a worst-case example for Lemma \ref{lemmahighdensity} as in \eqref{worstcase} and $\WW[T_\rho]$ is defined as above, then $a \approx b$.  So each $W \in \WW(T_\rho)$ is approximately a tube $T_a$ of radius $a$ and length 1.
\end{lemma}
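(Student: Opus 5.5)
We argue by contradiction: suppose $\TT$ is a worst-case example for Lemma \ref{lemmahighdensity} but $a \ll b$. Then each $W \in \WW(T_\rho)$ is a genuine plank of dimensions $a \times b \times 1$. We will show that $\mu(\TT)$ is noticeably smaller than the right-hand side of \eqref{worstcase}, which contradicts the worst-case assumption. The strategy mirrors the case analysis in Section \ref{secothershapes}, with the planks $W$ playing the role of the sets $K$ there.

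\emph{Step 1: the inner factor.} Inside each $W$, the tubes $\TT[W]$ are Frostman in $W$ by the greedy choice. We apply a linear change of variables taking $W$ to an $(a/b) \times 1 \times 1$ slab, or better, we cut $W$ into $b$-balls. Either way, the tubes in $W$ behave like a Frostman family of thin tubes in a plank. When $a$ is comparable to $\delta$ this is essentially a planar problem, so the $L^2$ method gives $|U(\TT[W])| \gtrapprox |W|$. For general $a$, we instead rescale the $a$-direction and apply the definition of $\gamma$ in \eqref{defgamma} to the rescaled family.

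\emph{Step 2: the outer factor.} We control how the planks $W$ overlap, as $T_\rho$ ranges over $\TT_\rho$. Since $\mu(\WW(T_\rho)) \lessapprox 1$ inside each $T_\rho$, only the overlaps between different fat tubes matter. We split into two cases.
\begin{itemize}
\item If most intersections of planks are transverse, then localizing to $b$-balls shows we see transverse $a \times b \times b$ slabs. The $L^2$ method for slabs then shows that $U(\TT)$ nearly fills these balls. This yields a density gain as in Subsection \ref{subsecdensity}, namely $\mu(\TT)$ much smaller than \eqref{worstcase}.
\item If most intersections are tangential, the planks through a given plank lie in a common slab. There we rescale planks to tubes $\TT'$, as in Case 3 of Section \ref{secothershapes}. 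The family $\TT'$ inherits the Frostman property from $\TT$, so we may apply \eqref{defgamma} to it.
\end{itemize}

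\emph{Step 3: the exponent bookkeeping.} We multiply the bounds: $\mu(\TT) \lessapprox \mu(\text{tubes in } W)\cdot\mu(\text{planks})$. Then we compare with $(\delta^{-2})^\beta(\delta^2|\TT|)^\gamma$, using $\gamma \le 1$, $\beta > 0$, and the strict density gain $\Delta(\TT[T_\rho],W) \gg \Delta(\TT[T_\rho],T_\rho)$. The aim is a strict loss whenever $a \ll b$. Heuristically, a plank is a worse container than a tube: the plank geometry lets us use the sharp two-dimensional $L^2$ estimate in place of the lossy exponent $\beta$ on one of the factors. That gain of $(a/b)^{c\beta}$ breaks the equality forced by the worst case.

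\emph{Main obstacle.} The hardest part is Step 3 in the tangential case. There the rescaled set $\TT'$ has its own density, so the bound applied to it carries a factor $(\delta'^2|\TT'|)^\gamma$. We must check that the exponents combine to a strict loss rather than merely reproducing \eqref{worstcase}. We would first verify the endpoint $a \approx \delta$, $b \approx \rho$, where the computation parallels Lemma \ref{lemlowdenscase}. We would then interpolate using uniformity of dimensions across the pigeonholed scales.
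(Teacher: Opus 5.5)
There is a genuine gap. Your moral matches the paper's: factor $\mu(\TT)\lessapprox\mu(\TT[W])\,\mu(\WW)$, and gain by using the lossless $L^2$ slab bound at some scales. But the only inductive input you invoke is \eqref{defgamma}, and that applies only to $\delta$-tubes that are Frostman in a ball. Neither factor has this form.

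Mapping $W$ (of dimensions $a\times b\times 1$) to $B_1$ turns the tubes of $\TT[W]$ into convex sets of dimensions about $(\delta/b)\times(\delta/a)\times 1$. Rescaling only the $a$-direction gives $(\delta b/a)\times\delta\times 1$ sets instead. For $\delta\ll a\ll b$ these are planks, not tubes, so your Step 1 fails for general $a$. The same is true of $\WW$, which is a family of $a\times b\times 1$ planks.

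The missing idea is the paper's generalization of \eqref{defgamma} to a family $\VV$ of convex sets that is Frostman in a convex set $U$. One normalizes to $U=B_1$. Tubes are then handled by \eqref{defgamma} and slabs by $L^2$. Intermediate $a_{\VV}\times b_{\VV}\times 1$ planks are handled by a two-scale argument: at scale $b_{\VV}$ they look like tubes, so one uses \eqref{defgamma}; inside $b_{\VV}$-balls they look like slabs, so one uses $L^2$. Applying this generalized bound to both $\TT[W]$ and $\WW$ and multiplying is what produces a strict improvement on \eqref{worstcase} when $a\ll b$. Your Step 3 asserts a gain of $(a/b)^{c\beta}$, but without this estimate there is nothing to plug in. The exponent comparison you defer is the actual content of the lemma.

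Step 2 also fails as written, in both cases.

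\emph{Transverse case.} The $L^2$ method shows that $U(\WW)$ nearly fills a $b$-ball, not $U(\TT)$. For $a\gg\delta$, the set $U(\TT[W])$ is sparse in $W$. So you must still multiply by the inner density, and also by a coarse-scale bound for the thickened planks. Moreover, the comparison in Subsection \ref{subsecdensity} was against the target $\delta^{2\beta}$ for families with $\Delta_{max}\lessapprox 1$, not against \eqref{worstcase}.

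\emph{Tangential case.} The Frostman property does not pass to the subfamily $\WW_S$ of planks in a slab $S$. Frostman in $B_1$ bounds every $\Delta(\WW,K)$ above by $\Delta(\WW,B_1)$, but it gives no lower bound on $\Delta(\WW_S,S)$. So \eqref{defgamma} cannot be applied to the rescaled $\TT'$. In Case 3 of Section \ref{secothershapes} the inherited property was $\Delta_{max}\lessapprox 1$, which is monotone under passing to subfamilies; the Frostman property is not.

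What you actually need, and do not establish, is that $\WW$ itself is Frostman in $B_1$. The paper gets this by pigeonholing so that $|\TT[W]|$ is roughly uniform, and then using that $\TT$ is Frostman.
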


\begin{proof} [Proof sketch]

For each $T_\rho \in \TT[T_\rho]$, we have defined $\WW(T_\rho)$.  We gather all these sets together to form $\WW = \cup_{T_\rho \in \TT_\rho} \WW(T_\rho)$.  Now each $T \in \TT$ lies in $\approx 1$ $W \in \WW$.  Therefore we have $|\TT| \approx |\TT[W]| |\WW|$ and we can bound $\mu(\TT)$ by 

\begin{equation} \label{twoscales4} \mu(\TT) \lessapprox \mu(\TT[W]) \mu(\WW).
\end{equation}

Moreover, $\TT[W]$ and $\WW$ are each Frostman.  The set $\TT[W]$ is Frostman because we chose $W$ to maximize $\Delta(\TT[T_\rho], W)$.    To see that $\WW$ is Frostman,  we arrange by some pigeonholing arguments that the sets $\TT[W]$ have roughly the same cardinality.  Now if $K \subset B_1$ has $\Delta(\WW, K) \gg \Delta(\WW, B_1)$ it would imply $\Delta(\TT, K) \gg \Delta(\TT, B_1)$.   Since $\TT$ is Frostman,  it follows that $\WW$ is Frostman too.

Since $\TT[W]$ and $\WW$ are each Frostman, it almost looks like we could bound $\mu(\TT[W])$ and $\mu(\WW)$ by induction using \eqref{defgamma}.  That's not quite possible because \eqref{defgamma} applies to a set of tubes which is Frostman in a ball, whereas $\TT[W]$ is a set of tubes that is Frostman in a convex set $W$, and $\WW$ is a set of convex sets which is Frostman in a ball.  In order to use induction, we generalize Lemma \ref{lemmahighdensity} and \eqref{defgamma} to a set of convex sets which is Frostman in another convex set.  This more general version then applies to both $\TT[W]$ and $\WW$.

Suppose that $\VV$ is a set of convex sets which is Frostman in a convex set $U$.  By a linear change of variables, we can reduce to the case that $U = B_1$.  Then we examine the dimensions of $V \in \VV$.  Say each $V \in \VV$ has dimensions $a_{\VV} \times b_{\VV} \times 1$.   If $a_{\VV} \approx b_{\VV}$, then $V$ is a tube and we can apply \eqref{defgamma}.  The other extreme example is when $V$ is a slab of dimensions $a_{\VV} \times 1 \times 1$.   Sharp bounds for intersecting slabs have been known for a long time by the $L^2$ method.   So in the slab case, we get very strong bounds for $\mu(\VV)$.  This leaves an intermediate case when $V$ has dimensions $a_{\VV} \times b_{\VV} \times 1$ with $a_{\VV} \ll b_{\VV} \ll 1$.  In this case, the shape of $V$ is intermediate between a tube and a slab.  In fact, at large scales $V$ looks like a tube, but if we intersect $V$ with a small ball, then it looks like a slab.  
In this case, $\mu(\VV)$ can be bounded by a multiscale argument that combines estimates for tubes from \eqref{defgamma} with estimates for slabs from the $L^2$ method.

We apply this method to bound $\mu(\TT[W])$ and $\mu(\WW)$ and then plug those bounds into \eqref{twoscales4} to bound $\mu(\TT)$.  The computation shows that $\mu(\TT) \ll  (\delta^{-2})^\beta (\delta^2 | \TT|)^{\gamma}$ unless $a \approx b$.  In other words, if $\TT$ is a worst-case example for Lemma \ref{lemmahighdensity}, then $a \approx b$.  

Morally, the reason that the computation works out this way is that the bounds for slabs are sharp.  When we unwind the argument above, we bound $\mu(\TT)$ by applying \eqref{defgamma} at some scales and the $L^2$ bounds for slabs at other scales.  Since the bound for slabs is so strong, this improves on \eqref{defgamma} as long as we use the slab bound at some scales.  When $a \approx b$, then all the convex sets in the argument are tubes, and the slab bound never appers.  But if $a \ll b$, then each $W \in \WW$ looks like a plank and there are some scales where the slab bounds come into play.

\end{proof}

\subsection{Finding a good scale}

Lemma \ref{lemWround} leads to a general condition for finding a good scale.

\begin{lemma} \label{lemfindscale} Suppose that $\TT$ is a worst-case example for the high density lemma, as in \eqref{worstcase}.  If $|\TT| \gg \delta^{-2}$, then there is a good scale $a$.  Recall this means that

\begin{itemize}

\item $\delta \ll a \ll 1$.

\item $\TT[T_a]$ is Frostman and $\TT_a$ is Frostman

\end{itemize} 

Moreover,  $\TT[T_a]$ and $\TT_a$ will also be worst-case examples for the high density lemma.
\end{lemma}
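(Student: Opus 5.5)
The plan is to take the densest-subset decomposition from the previous subsection at a well-chosen scale and read off the good scale $a$ from Lemma \ref{lemWround}. Since $|\TT| \gg \delta^{-2}$, $\TT$ is Frostman in $B_1$, and $\Delta(\TT, T) \sim 1$ for a single tube, we have $\Delta(\TT, B_1) \approx \delta^2|\TT| \gg 1$. Pick an intermediate scale $\rho$, say near $\delta^{1/2}$ or any $\rho$ with $\delta \ll \rho \ll 1$. Form $\WW(T_\rho)$ by the greedy algorithm, so each $W$ maximizes $\Delta(\TT[T_\rho], W)$ and has dimensions $a\times b\times 1$ with $\delta \le a \le b \le \rho$. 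By Lemma \ref{lemWround}, since $\TT$ is worst case, $a \approx b$. Hence each $W$ is essentially an $a$-tube $T_a$, and $\WW$ is essentially $\TT_a$, after pigeonholing so that the $|\TT[W]|$ are comparable.

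Next I would check the two Frostman conditions at scale $a$. $\TT[T_a] = \TT[W]$ is Frostman in $W$ because $W$ was chosen to maximize density. $\TT_a \approx \WW$ is Frostman in $B_1$ by the argument in the proof of Lemma \ref{lemWround}: a convex $K$ with $\Delta(\WW,K) \gg \Delta(\WW,B_1)$ would give $\Delta(\TT,K) \gg \Delta(\TT,B_1)$. The worst-case statement should follow from the string-of-inequalities trick used in \eqref{stringeq}. Apply \eqref{defgamma}, suitably rescaled, to $\TT[T_a]$ inside $T_a$ and to $\TT_a$ in $B_1$, and combine with $\mu(\TT) \lessapprox \mu(\TT[T_a])\mu(\TT_a)$. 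Because $|\TT| \approx |\TT[T_a]||\TT_a|$, the exponents should multiply out exactly to $(\delta^{-2})^\beta(\delta^2|\TT|)^\gamma$. Since $\TT$ is worst case, every inequality in the chain is then an approximate equality, so both pieces are worst case.

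The main obstacle is showing $\delta \ll a \ll 1$, that is, ruling out the degenerate outcomes $a \approx \delta$ and $a \approx 1$. The case $a \approx 1$ is impossible since $a \le \rho \ll 1$. For $a \approx \delta$, $W$ would be a single tube, so $\TT[T_\rho]$ would itself be Frostman and $\rho$ would already be a good scale. To make this work I would first try the scale $\rho$ itself: if $\TT[T_\rho]$ is Frostman we are done with $a=\rho$. Otherwise the densest $W$ has $\Delta(\TT[T_\rho],W) \gg \Delta(\TT[T_\rho],T_\rho)$, which forces $W \neq T$, hence $a \gg \delta$ (when $a\approx b$). The remaining worry is that $a$ could be $\approx \delta$ with many tubes packed nearly on top of each other; this is excluded since distinct $\delta$-tubes essentially don't coincide. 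The hypothesis $|\TT| \gg \delta^{-2}$ enters to guarantee nontrivial density at some intermediate scale, so that the scale found is genuinely intermediate. I expect some care is needed in choosing $\rho$ so that the resulting $a$ stays bounded away from both endpoints by a definite power.
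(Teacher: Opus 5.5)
There is a genuine gap at the one step that needs an idea, namely $a \gg \delta$; the rest of your outline matches the paper's. You take $\rho$ arbitrary in $\delta \ll \rho \ll 1$, even suggesting $\rho \approx \delta^{1/2}$. You then claim that if $\TT[T_\rho]$ is not Frostman, the densest $W$, which has $\Delta(\TT[T_\rho],W) \gg \Delta(\TT[T_\rho],T_\rho)$, cannot be a single tube. That inference is wrong: it only says $W$ is denser than $T_\rho$. A single $\delta$-tube $T$ always has $\Delta(\TT[T_\rho],T) \ge 1$. So if $\Delta(\TT,T_\rho) \ll 1$, then $\TT[T_\rho]$ is automatically non-Frostman, and the densest $W$ may well be a single $\delta$-tube, giving $a \approx \delta$.

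Nothing in the hypotheses prevents $\Delta(\TT,T_\rho)\ll 1$ at $\rho \approx \delta^{1/2}$. The Frostman condition only bounds densities from above. Counting ($|\TT_\rho|\lesssim \rho^{-4}$) only gives $|\TT[T_\rho]| \gtrsim |\TT|\rho^4$, which at that scale is far below $(\rho/\delta)^2$ when $\delta^2|\TT|$ is only mildly large. Lemma \ref{lemWround} gives $a \approx b$ but says nothing about $a \gg \delta$. Your remark that distinct tubes do not coincide does not address this scenario.

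The missing idea is to use the hypothesis $|\TT| \gg \delta^{-2}$ to \emph{choose} $\rho$. Since $\Delta(\TT,B_1) \approx \delta^2|\TT| \gg 1$, you can take $\rho \ll 1$ but only slightly below $1$ so that $|\TT[T_\rho]| \gg (\rho/\delta)^2$, i.e.\ $\Delta(\TT,T_\rho) \gg 1$. For instance, $|\TT_\rho| \lesssim \rho^{-4}$ and uniformity give $\Delta(\TT,T_\rho) \gtrsim \rho^2 \delta^2 |\TT|$. Then $\Delta(\TT[W],W) \approx \Delta_{max}(\TT[T_\rho]) \ge \Delta(\TT[T_\rho],T_\rho) \gg 1$. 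A $W$ that is essentially a $\delta$-tube would instead have density $\approx 1$, so $a \gg \delta$. The other side, $a \le \rho \ll 1$, is immediate.

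With $\rho$ chosen this way, the rest of your proposal is exactly the paper's argument: Lemma \ref{lemWround}, the two Frostman checks, and the chain of approximate equalities showing $\TT[T_a]$ and $\TT_a$ are worst case. Your separate branch ``$\TT[T_\rho]$ Frostman, so $a=\rho$'' becomes unnecessary, since it is just the case $W = T_\rho$.
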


\begin{proof} [Proof sketch] Since $|\TT| \gg \delta^{-2}$,  we can choose $\rho \ll 1$ so that $|\TT[T_\rho]| \gg (\rho/\delta)^2$ and so $\Delta(\TT,  T_\rho) \gg 1$.   

We define $\WW(T_\rho)$ as above.  Recall that each $W \in \WW(T_\rho)$ maximizes $\Delta(\TT[T_\rho], W)$, and so $\TT[W]$ is Frostman.  

Each $W$ has dimensions $a \times b \times 1$.  By Lemma \ref{lemWround}, $a \approx b$, and so $W$ is essentially a tube of radius $a$, $T_a$.  We also have $a \le \rho \ll 1$.  

Recall that $\WW = \cup_{T_\rho \in \TT_\rho} \WW(T_\rho)$, and that each $T \in \TT$ lies in $\approx 1$ set $W \in \WW$.  Since each $W$ is a tube of radius $a$, we must have $\WW = \TT_a$.

Since $\TT[W]$ is Frostman for each $W \in \WW$, we see that $\TT[T_a]$ is Frostman for each $T_a \in \TT_a$.  On the other hand, since $\TT$ is Frostman it implies that $\TT_a$ is Frostman.

Next we have to check that $a \gg \delta$.  Recall that we chose $\rho$ so that $\Delta(\TT[T_\rho], T_\rho) \gg 1$.  But $\Delta(\TT[W], W) \approx \Delta_{max}(\TT[T_\rho]) \ge \Delta(\TT[T_\rho], T_\rho) \gg 1$.  But if $a \approx \delta$,  then $W = T_a$ would essentially be a $\delta$-tube,  and then we would have
$\Delta(\TT[W], W) \approx 1$.  Therefore, we must have $a \gg \delta$ as desired.  This shows that $a$ is a good scale.

Finallly we sketch the proof that $\TT[T_a]$ and $\TT_a$ are both worst-case examples for the high density lemma.  Recall that 

$$ \mu(\TT) \lessapprox  \mu(\TT[T_a]) \mu(\TT_a). $$

\noindent Since $\TT[T_a]$ and $\TT_a$ are both Frostman,  we can bound $\mu(\TT[T_a])$ and $\mu(\TT_a)$ using \eqref{defgamma},  the definition of the exponent $\gamma$.   When we plug in and simplify the right-hand side,  we get $\delta^{- 2\beta} \left( \delta^2 |\TT| \right)^\gamma$,  and since $\TT$ is worst-case,  this is $\approx \mu(\TT)$.   Therefore,  every step in this chain must be an approximate equality.   In particular this means that $\mu(\TT[T_a])$ and $\mu(\TT_a)$ must be worst-case for the high density lemma.

\end{proof}

\subsection{Multiscale decomposition}

By applying Lemma \ref{lemfindscale} repeatedly, we can choose a sequence of scales $1 = \rho_0 \gg \rho_1 \gg \rho_2 \gg ... \gg \rho_N = \delta$ so that  $ \TT_{\rho_j}[ T_{\rho_{j-1}}]$ is always Frostman, and for each $j$ one of the following holds:

\begin{enumerate}

\item $\rho_{j-1} / \rho_{j}$ is very small, or 

\item  $\left|  \TT_{\rho_j} [  T_{\rho_{j-1}}] \right| \approx \left( \frac{ \rho_{j-1} }{\rho_j} \right)^2$.

\end{enumerate}

Figure \ref{fig:keyscales} is a picture showing how this sequence of scales may look:

    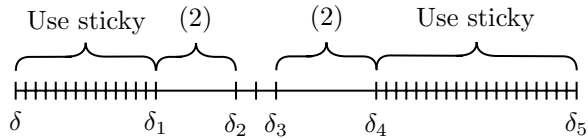
\begin{figure} [h!]
      \begin{center}
        \input{key_scales_diagram}
      \end{center}
      \caption{Key scales}\label{fig:keyscales}
    \end{figure}

Here each short vertical line represents a scale $\rho_j$.   These scales are generally quite close together,  except for two significant gaps.  Each significant gap must be in case (2),  and so we labelled them (2).

If $|\TT| \gg \delta^{-2}$,  not every interval can be in case 2 --  a definite fraction of intervals must be in case 1.   Since the intervals in case 1 are very small,  a definite fraction of scales must consist of many small intervals.   On any such block of small intervals,  we can apply sticky Kakeya,  giving a very strong bound.   In our picture,  we have drawn two such blocks of small intervals,  and they are labelled ``use sticky Kakeya''.

To bound $\mu(\TT)$,  we begin by factoring $\mu(\TT)$ into contributions coming from different scale ranges.   For instance,  in the scenario illustrated in figure 5,  we would bound $\mu(\TT)$ by




\begin{equation} \label{multscalefactor} \mu(\TT) \lessapprox \mu( \TT[T_{\delta_1}]) \mu(\TT_{\delta_1}[ T_{\delta_2}])  \mu(\TT_{\delta_2}[ T_{\delta_3}])  \mu(\TT_{\delta_3} [ T_{\delta_4}])  \mu(\TT_{\delta_4}) . \end{equation}

\noindent The five factors on the right-hand side correspond to five scale ranges in the picture.    Note that if we used the trivial bound (\ref{trivbound}) to bound each factor on the right-hand side,  then we would get back the trivial bound.   Instead,  we use sticky Kakeya on the scale ranges labelled sticky.  On the other scale ranges, we use the trivial bound.  But since $\beta > 0$,  the bound in the sticky case is better than the trivial bound,  and so our overall bound for $\mu(\TT)$ is better than the trivial bound (\ref{trivbound}).   A careful calculation gives the value $\gamma = 1 - \beta$.  

This finishes the outline of the proof of the high density lemma.

\vskip10pt

Notice that in this argument,  we broke the scales from $\delta$ to 1 into several ranges in a strategic way that was tailored to the geometry of $\TT$.   The idea of choosing these ranges strategically was introduced by Keleti and Shmerkin in \cite{KS},  and it has become a major tool in this circle of problems.   For instance,  it plays an important role in the solution of the Furstenberg set problem in \cite{KS} and \cite{RW}.

\section{Final recap}

At the beginning of the survey,  we said that the hero of the proof was multiscale analysis.   We begin with a worst-case Kakeya set $\TT$,  with $\mu(\TT) \approx |\TT|^\beta$.   We then relate $\TT$ to many other sets of tubes $\TT'$ obeying $\Delta_{max}(\TT') \lessapprox 1$.   By definition of $\beta$,  we know that $\mu(\TT') \lessapprox |\TT'|^\beta$,  and this gives us information about $\TT$.

As a final recap,  let us look back and see how we found all these sets of tubes $\TT'$.   We used several different ways to relate a set of tubes $\TT$ to a new set of tubes,  or more generally to a new set of convex sets $\WW$.   If we start with $\TT$,  we can

\begin{itemize}

\item Look at thicker tubes $\TT_\rho$.

\item Look at $\TT[T_\rho]$.

\item Intersect tubes of $\TT$ with a smaller ball $B$ and look at $\TT_B$.

\item Find convex sets $K$ that maximize $\Delta(\TT,  K)$.   Then we can look at $\TT[K]$.   Also,  we can form a set $\KK$ of these convex sets $K$ and look at $\KK$.   Sometimes we can change coordinates to convert $\KK$ to a set of tubes.

\end{itemize}

These operations can be chained together.   For instance,  starting with $\TT$ we might first look at $\TT_B$.   Then we might find convex sets $K$ that maximize $\Delta(\TT_B,  K)$ and study $\KK$.   After some coordinate changes,  we might be led to a new set of tubes $\TT'$.   Or starting with $\TT$ we might first look at $\TT_B$ and then look at a thicker set of tubes $\TT_{B, \rho}$.   Starting at $\TT$,  we may need to use several operations to arrive at a new set $\TT'$ that obeys $\Delta_{max}(\TT') \lessapprox 1$.    By combining information from many such sets of tubes $\TT'$,  the proof shows that if $\beta > 0$,  the set $\TT$ would have to have special geometric and algebraic structure,  closely matching the Heisenberg group.

We also note that the argument is essentially by induction.  If we unwind the induction,  then the argument effectively uses the above operations many times to get from the initial set of tubes $\TT$ to other sets of tubes $\TT'$.

At the beginning of the survey,  we said that the proof of the Kakeya problem is based on studying the problem at many scales.   Looking at a problem at many scales has been a central theme in harmonic analysis for a hundred years,  and this proof can be regarded as part of that tradition.   But when we said that the proof is based on studying the problem at many scales,  we really meant that the proof is based on bringing into play all the sets of tubes $\TT'$ that can be built from $\TT$ by the operations above.   This version of multiscale analysis is much newer.   It was built over the last twenty five years,  with important parts of the picture appearing just in the last few years.

To finish this essay,  let us return to the Katz-Zahl example.   As we mentioned in Section \ref{secstickynot},  Katz and Zahl found a cousin problem to the Kakeya problem where the analogue of Kakeya does not hold but the analogue of the sticky case appears likely to hold.   That example made me think it was unlikely that the Kakeya problem could be reduced to the sticky case.   Wang and Zahl did reduce the general Kakeya problem to the sticky case,  and so it is natural to ask why their method does not apply to the Katz-Zahl example.

The Katz-Zahl example concerns a cousin of the Kakeya problem where $\RR$ is replaced by the ring $ A= \FF_p[x] / (x^2)$.   The ring $A$ has a natural notion of distance with two distinct length scales.   If $a + bx \in A$ with $a,b \in \FF_p$,  we define

$$ \| a + b x \|_A := \begin{cases} 1 & \textrm{ if } a \not= 0 \\ p^{-1} &  \textrm{ if } a = 0,  b \not= 0 \\ 0  & \textrm{ if } a = b = 0 \\ \end{cases} $$

\noindent There is a cousin of the Heisenberg group in $A^3$ and it leads to a counterexample to the analogue of Theorem \ref{main}.   But unlike in $\CC^3$,  the Heisenberg group cousin in $A^3$ is {\it not} sticky.   It appears likely that in $A^3$,  the sticky case of Wolff axiom Kakeya conjecture is true,  but the general conjecture is false.

Looking back at the proof of the Kakeya conjecture, the key distinction between $\RR$ and the ring $A$ is that the ring $A$ has only two distinct non-zero scales.   The proof of the Kakeya conjecture requires discussing many scales in order to run the multiscale analysis.   In the ring $A$,  we do not have the rich range of related sets of tubes $\TT'$ that are used in the proof of the Kakeya conjecture.

\end{document}

%% file: intersecting_tubes_diagram.tex
\tikzset{every picture/.style={line width=0.75pt}} 

\begin{tikzpicture}[x=0.75pt,y=0.75pt,yscale=-1,xscale=1]

\draw  [color={rgb, 255:red, 74; green, 144; blue, 226 }  ,draw opacity=1 ] (41.41,175.16) -- (209.56,14.59) -- (262.77,70.31) -- (94.62,230.88) -- cycle ;
\draw  [color={rgb, 255:red, 74; green, 144; blue, 226 }  ,draw opacity=1 ] (40.52,61.72) -- (260.55,136.84) -- (235.66,209.75) -- (15.63,134.63) -- cycle ;
\draw  [color={rgb, 255:red, 220; green, 50; blue, 71 }  ,draw opacity=1 ][fill={rgb, 255:red, 220; green, 50; blue, 71 }  ,fill opacity=0.14 ] (73.97,203.78) -- (208.16,18.66) -- (216.73,24.87) -- (82.53,209.99) -- cycle ;
\draw  [color={rgb, 255:red, 220; green, 50; blue, 71 }  ,draw opacity=1 ][fill={rgb, 255:red, 220; green, 50; blue, 71 }  ,fill opacity=0.14 ] (89.72,214.47) -- (207.1,18.26) -- (216.18,23.69) -- (98.79,219.9) -- cycle ;
\draw  [color={rgb, 255:red, 220; green, 50; blue, 71 }  ,draw opacity=1 ][fill={rgb, 255:red, 220; green, 50; blue, 71 }  ,fill opacity=0.14 ] (50.68,174.91) -- (247.42,58.41) -- (252.81,67.51) -- (56.07,184.01) -- cycle ;
\draw  [color={rgb, 255:red, 220; green, 50; blue, 71 }  ,draw opacity=1 ][fill={rgb, 255:red, 220; green, 50; blue, 71 }  ,fill opacity=0.14 ] (47.13,64.31) -- (234.43,195.45) -- (228.36,204.11) -- (41.07,72.97) -- cycle ;
\draw  [color={rgb, 255:red, 220; green, 50; blue, 71 }  ,draw opacity=1 ][fill={rgb, 255:red, 220; green, 50; blue, 71 }  ,fill opacity=0.14 ] (28.77,100.59) -- (254.43,137.39) -- (252.73,147.82) -- (27.06,111.03) -- cycle ;
\draw  [color={rgb, 255:red, 220; green, 50; blue, 71 }  ,draw opacity=1 ][fill={rgb, 255:red, 220; green, 50; blue, 71 }  ,fill opacity=0.14 ] (23.16,120.23) -- (244.91,175.93) -- (242.34,186.19) -- (20.58,130.49) -- cycle ;

\end{tikzpicture}

%% file: tube_intersect_ball_diagram.tex
\tikzset{every picture/.style={line width=0.75pt}} 

\begin{tikzpicture}[x=0.75pt,y=0.75pt,yscale=-1,xscale=1]

\draw  [color={rgb, 255:red, 74; green, 144; blue, 226 }  ,draw opacity=1 ] (46.16,25.2) -- (264.48,93.12) -- (261.34,103.22) -- (43.01,35.3) -- cycle ;
\draw  [color={rgb, 255:red, 74; green, 144; blue, 226 }  ,draw opacity=1 ] (43.23,43.32) -- (269.72,74.62) -- (268.27,85.1) -- (41.78,53.79) -- cycle ;
\draw  [color={rgb, 255:red, 74; green, 144; blue, 226 }  ,draw opacity=1 ] (44.34,43.11) -- (255.35,131.16) -- (251.28,140.92) -- (40.27,52.87) -- cycle ;
\draw  [color={rgb, 255:red, 74; green, 144; blue, 226 }  ,draw opacity=1 ] (41.88,64.04) -- (265.85,110.01) -- (263.72,120.37) -- (39.75,74.4) -- cycle ;
\draw   (104,79.17) .. controls (104,54.22) and (124.22,34) .. (149.17,34) .. controls (174.11,34) and (194.33,54.22) .. (194.33,79.17) .. controls (194.33,104.11) and (174.11,124.33) .. (149.17,124.33) .. controls (124.22,124.33) and (104,104.11) .. (104,79.17) -- cycle ;
\draw  [color={rgb, 255:red, 220; green, 50; blue, 71 }  ,draw opacity=1 ][fill={rgb, 255:red, 220; green, 50; blue, 71 }  ,fill opacity=0.41 ] (116.78,51.39) -- (189.69,66.88) -- (187.82,75.72) -- (114.91,60.23) -- cycle ;
\draw  [color={rgb, 255:red, 220; green, 50; blue, 71 }  ,draw opacity=1 ][fill={rgb, 255:red, 220; green, 50; blue, 71 }  ,fill opacity=0.41 ] (110.07,75.31) -- (181.32,97.19) -- (178.68,105.8) -- (107.43,83.92) -- cycle ;

\end{tikzpicture}

%% file: tube_intersect_ball_two_diagram.tex
\tikzset{every picture/.style={line width=0.75pt}} 

\begin{tikzpicture}[x=0.75pt,y=0.75pt,yscale=-1,xscale=1]

\draw  [color={rgb, 255:red, 74; green, 144; blue, 226 }  ,draw opacity=0.48 ] (272.96,42.94) -- (149.17,235.18) -- (144.96,232.47) -- (268.75,40.24) -- cycle ;
\draw  [color={rgb, 255:red, 74; green, 144; blue, 226 }  ,draw opacity=0.48 ] (264.17,35.13) -- (163.44,240.39) -- (158.95,238.18) -- (259.68,32.92) -- cycle ;
\draw  [color={rgb, 255:red, 74; green, 144; blue, 226 }  ,draw opacity=0.48 ] (293.43,217.98) -- (152.57,37.88) -- (156.51,34.8) -- (297.37,214.9) -- cycle ;
\draw  [color={rgb, 255:red, 74; green, 144; blue, 226 }  ,draw opacity=0.48 ] (303.7,212.25) -- (142.96,49.65) -- (146.51,46.13) -- (307.25,208.74) -- cycle ;
\draw  [color={rgb, 255:red, 74; green, 144; blue, 226 }  ,draw opacity=0.48 ] (231.25,240.32) -- (211.01,12.57) -- (215.99,12.13) -- (236.23,239.87) -- cycle ;
\draw  [color={rgb, 255:red, 74; green, 144; blue, 226 }  ,draw opacity=0.48 ] (242.99,241.09) -- (196.55,17.22) -- (201.44,16.2) -- (247.89,240.08) -- cycle ;
\draw  [color={rgb, 255:red, 74; green, 144; blue, 226 }  ,draw opacity=0.48 ] (128.16,156) -- (352.35,111.08) -- (353.33,115.98) -- (129.15,160.9) -- cycle ;
\draw  [color={rgb, 255:red, 74; green, 144; blue, 226 }  ,draw opacity=0.48 ] (128.67,167.75) -- (346.16,97.21) -- (347.7,101.96) -- (130.21,172.5) -- cycle ;
\draw   (184,138.17) .. controls (184,113.22) and (204.22,93) .. (229.17,93) .. controls (254.11,93) and (274.33,113.22) .. (274.33,138.17) .. controls (274.33,163.11) and (254.11,183.33) .. (229.17,183.33) .. controls (204.22,183.33) and (184,163.11) .. (184,138.17) -- cycle ;
\draw  [color={rgb, 255:red, 220; green, 50; blue, 71 }  ,draw opacity=1 ][fill={rgb, 255:red, 220; green, 50; blue, 71 }  ,fill opacity=0.41 ] (194.76,159.24) -- (231.82,94.56) -- (235.29,96.55) -- (198.23,161.22) -- cycle ;
\draw  [color={rgb, 255:red, 220; green, 50; blue, 71 }  ,draw opacity=1 ][fill={rgb, 255:red, 220; green, 50; blue, 71 }  ,fill opacity=0.41 ] (205.65,101.71) -- (262.81,167.46) -- (259.79,170.08) -- (202.63,104.33) -- cycle ;
\draw  [color={rgb, 255:red, 220; green, 50; blue, 71 }  ,draw opacity=1 ][fill={rgb, 255:red, 220; green, 50; blue, 71 }  ,fill opacity=0.41 ] (220.84,95) -- (233.04,181.26) -- (229.08,181.82) -- (216.88,95.56) -- cycle ;
\draw  [color={rgb, 255:red, 220; green, 50; blue, 71 }  ,draw opacity=1 ][fill={rgb, 255:red, 220; green, 50; blue, 71 }  ,fill opacity=0.41 ] (271.48,129.82) -- (187.06,151.34) -- (186.07,147.47) -- (270.49,125.95) -- cycle ;

\end{tikzpicture}

%% file: plank_tube_correspondence_diagram.tex
\tikzset{every picture/.style={line width=0.75pt}} 

\begin{tikzpicture}[x=0.75pt,y=0.75pt,yscale=-1,xscale=1]

\draw [color={rgb, 255:red, 74; green, 144; blue, 226 }  ,draw opacity=1 ] [dash pattern={on 3.75pt off 3.75pt}]  (370,190) -- (420,150) ;
\draw [color={rgb, 255:red, 74; green, 144; blue, 226 }  ,draw opacity=1 ] [dash pattern={on 3.75pt off 3.75pt}]  (420,150) -- (520,150) ;
\draw [color={rgb, 255:red, 74; green, 144; blue, 226 }  ,draw opacity=1 ] [dash pattern={on 3.75pt off 3.75pt}]  (420,150) -- (420,50) ;
\draw [color={rgb, 255:red, 220; green, 50; blue, 71 }  ,draw opacity=1 ]   (402.33,176) -- (484.67,53) ;
\draw [color={rgb, 255:red, 220; green, 50; blue, 71 }  ,draw opacity=1 ]   (412.33,176) -- (494.67,53) ;
\draw [color={rgb, 255:red, 220; green, 50; blue, 71 }  ,draw opacity=1 ]   (402.33,186) -- (484.67,63) ;
\draw [color={rgb, 255:red, 220; green, 50; blue, 71 }  ,draw opacity=1 ]   (412.33,186) -- (494.67,63) ;
\draw [color={rgb, 255:red, 74; green, 144; blue, 226 }  ,draw opacity=1 ] [dash pattern={on 3.75pt off 3.75pt}]  (162,137) -- (262,137) ;
\draw [color={rgb, 255:red, 74; green, 144; blue, 226 }  ,draw opacity=1 ]   (90,174) -- (162,117) ;
\draw [color={rgb, 255:red, 74; green, 144; blue, 226 }  ,draw opacity=1 ]   (190,174) -- (262,117) ;
\draw [color={rgb, 255:red, 74; green, 144; blue, 226 }  ,draw opacity=1 ]   (190,194) -- (262,137) ;
\draw [color={rgb, 255:red, 74; green, 144; blue, 226 }  ,draw opacity=1 ] [dash pattern={on 3.75pt off 3.75pt}]  (90,194) -- (162,137) ;
\draw [color={rgb, 255:red, 74; green, 144; blue, 226 }  ,draw opacity=1 ]   (162,117) -- (167.22,117) -- (262,117) ;
\draw [color={rgb, 255:red, 74; green, 144; blue, 226 }  ,draw opacity=1 ]   (262,117) -- (262,137) ;
\draw [color={rgb, 255:red, 74; green, 144; blue, 226 }  ,draw opacity=1 ] [dash pattern={on 3.75pt off 3.75pt}]  (162,137) -- (162,117) ;
\draw  [color={rgb, 255:red, 220; green, 50; blue, 71 }  ,draw opacity=1 ] (110.33,188) -- (130.33,188) -- (130.33,192) -- (110.33,192) -- cycle ;
\draw  [color={rgb, 255:red, 220; green, 50; blue, 71 }  ,draw opacity=1 ] (220.33,119) -- (240.33,119) -- (240.33,123) -- (220.33,123) -- cycle ;
\draw [color={rgb, 255:red, 220; green, 50; blue, 71 }  ,draw opacity=1 ]   (110.33,188) -- (220.33,119) ;
\draw [color={rgb, 255:red, 220; green, 50; blue, 71 }  ,draw opacity=1 ]   (110.33,192) -- (220.33,123) ;
\draw [color={rgb, 255:red, 220; green, 50; blue, 71 }  ,draw opacity=1 ]   (130.33,188) -- (240.33,119) ;

\draw [color={rgb, 255:red, 220; green, 50; blue, 71 }  ,draw opacity=1 ]   (130.33,192) -- (240.33,123) ;

\draw  [color={rgb, 255:red, 74; green, 144; blue, 226 }  ,draw opacity=1 ] (90,174) -- (190,174) -- (190,194) -- (90,194) -- cycle ;
\draw  [color={rgb, 255:red, 74; green, 144; blue, 226 }  ,draw opacity=1 ] (370,90) -- (470,90) -- (470,190) -- (370,190) -- cycle ;
\draw [color={rgb, 255:red, 74; green, 144; blue, 226 }  ,draw opacity=1 ]   (370,90) -- (420,50) ;
\draw [color={rgb, 255:red, 74; green, 144; blue, 226 }  ,draw opacity=1 ]   (470,90) -- (520,50) ;
\draw [color={rgb, 255:red, 74; green, 144; blue, 226 }  ,draw opacity=1 ]   (470,190) -- (520,150) ;
\draw [color={rgb, 255:red, 74; green, 144; blue, 226 }  ,draw opacity=1 ]   (520,50) -- (420,50) ;
\draw [color={rgb, 255:red, 74; green, 144; blue, 226 }  ,draw opacity=1 ]   (520,150) -- (520,50) ;
\draw    (282,150) -- (348,150) ;
\draw [shift={(350,150)}, rotate = 180] [color={rgb, 255:red, 0; green, 0; blue, 0 }  ][line width=0.75]    (10.93,-3.29) .. controls (6.95,-1.4) and (3.31,-0.3) .. (0,0) .. controls (3.31,0.3) and (6.95,1.4) .. (10.93,3.29)   ;
\draw [shift={(280,150)}, rotate = 0] [color={rgb, 255:red, 0; green, 0; blue, 0 }  ][line width=0.75]    (10.93,-3.29) .. controls (6.95,-1.4) and (3.31,-0.3) .. (0,0) .. controls (3.31,0.3) and (6.95,1.4) .. (10.93,3.29)   ;
\draw  [color={rgb, 255:red, 220; green, 50; blue, 71 }  ,draw opacity=1 ] (402.33,176) -- (412.33,176) -- (412.33,186) -- (402.33,186) -- cycle ;
\draw  [color={rgb, 255:red, 220; green, 50; blue, 71 }  ,draw opacity=1 ] (484.67,53) -- (494.67,53) -- (494.67,63) -- (484.67,63) -- cycle ;

\draw (139.25,197.4) node [anchor=north] [inner sep=0.75pt]    {$K$};
\draw (423.34,195) node [anchor=north] [inner sep=0.75pt]   [align=left] {Unit Cube};

\end{tikzpicture}

%% file: key_scales_diagram.tex
\tikzset{every picture/.style={line width=0.75pt}} 

\begin{tikzpicture}[x=0.75pt,y=0.75pt,yscale=-1,xscale=1]

\draw    (120,100) -- (50,100) (115,104.5) -- (115,95.5)(110,104.5) -- (110,95.5)(105,104.5) -- (105,95.5)(100,104.5) -- (100,95.5)(95,104.5) -- (95,95.5)(90,104.5) -- (90,95.5)(85,104.5) -- (85,95.5)(80,104.5) -- (80,95.5)(75,104.5) -- (75,95.5)(70,104.5) -- (70,95.5)(65,104.5) -- (65,95.5)(60,104.5) -- (60,95.5)(55,104.5) -- (55,95.5) ;
\draw [shift={(50,100)}, rotate = 360] [color={rgb, 255:red, 0; green, 0; blue, 0 }  ][line width=0.75]    (0,4.47) -- (0,-4.47)   ;
\draw [shift={(120,100)}, rotate = 360] [color={rgb, 255:red, 0; green, 0; blue, 0 }  ][line width=0.75]    (0,4.47) -- (0,-4.47)   ;
\draw    (180,100) -- (160,100) (170,104.5) -- (170,95.5) ;
\draw [shift={(160,100)}, rotate = 360] [color={rgb, 255:red, 0; green, 0; blue, 0 }  ][line width=0.75]    (0,4.47) -- (0,-4.47)   ;
\draw [shift={(180,100)}, rotate = 360] [color={rgb, 255:red, 0; green, 0; blue, 0 }  ][line width=0.75]    (0,4.47) -- (0,-4.47)   ;
\draw    (330,100) -- (230,100) (325,104.5) -- (325,95.5)(320,104.5) -- (320,95.5)(315,104.5) -- (315,95.5)(310,104.5) -- (310,95.5)(305,104.5) -- (305,95.5)(300,104.5) -- (300,95.5)(295,104.5) -- (295,95.5)(290,104.5) -- (290,95.5)(285,104.5) -- (285,95.5)(280,104.5) -- (280,95.5)(275,104.5) -- (275,95.5)(270,104.5) -- (270,95.5)(265,104.5) -- (265,95.5)(260,104.5) -- (260,95.5)(255,104.5) -- (255,95.5)(250,104.5) -- (250,95.5)(245,104.5) -- (245,95.5)(240,104.5) -- (240,95.5)(235,104.5) -- (235,95.5) ;
\draw [shift={(230,100)}, rotate = 360] [color={rgb, 255:red, 0; green, 0; blue, 0 }  ][line width=0.75]    (0,4.47) -- (0,-4.47)   ;
\draw [shift={(330,100)}, rotate = 360] [color={rgb, 255:red, 0; green, 0; blue, 0 }  ][line width=0.75]    (0,4.47) -- (0,-4.47)   ;
\draw    (120,100) -- (160,100) ;
\draw    (180,100) -- (230,100) ;
\draw   (119.92,89.79) .. controls (119.91,85.12) and (117.57,82.8) .. (112.9,82.81) -- (94.97,82.88) .. controls (88.3,82.91) and (84.96,80.59) .. (84.95,75.92) .. controls (84.96,80.59) and (81.64,82.93) .. (74.97,82.95)(77.97,82.94) -- (57.05,83.02) .. controls (52.38,83.03) and (50.06,85.37) .. (50.08,90.04) ;
\draw   (159.9,90) .. controls (159.9,85.33) and (157.57,83) .. (152.9,83) -- (150.03,83) .. controls (143.36,83) and (140.03,80.67) .. (140.03,76) .. controls (140.03,80.67) and (136.7,83) .. (130.03,83)(133.03,83) -- (127.16,83) .. controls (122.49,83) and (120.16,85.33) .. (120.16,90) ;
\draw   (230.2,89.75) .. controls (230.2,85.08) and (227.87,82.75) .. (223.2,82.75) -- (215.14,82.75) .. controls (208.47,82.75) and (205.14,80.42) .. (205.14,75.75) .. controls (205.14,80.42) and (201.81,82.75) .. (195.14,82.75)(198.14,82.75) -- (187.09,82.75) .. controls (182.42,82.75) and (180.09,85.08) .. (180.09,89.75) ;
\draw   (330.05,89.75) .. controls (330.08,85.08) and (327.76,82.74) .. (323.09,82.71) -- (290.09,82.55) .. controls (283.42,82.52) and (280.1,80.17) .. (280.12,75.5) .. controls (280.1,80.17) and (276.76,82.48) .. (270.09,82.45)(273.09,82.46) -- (237.09,82.28) .. controls (232.42,82.25) and (230.08,84.57) .. (230.05,89.24) ;

\draw (85.34,73.25) node [anchor=south] [inner sep=0.75pt]   [align=left] {Use sticky};
\draw (279.6,71.5) node [anchor=south] [inner sep=0.75pt]   [align=left] {Use sticky};
\draw (139.75,72.26) node [anchor=south] [inner sep=0.75pt]    {$( 2)$};
\draw (205.38,72.01) node [anchor=south] [inner sep=0.75pt]    {$( 2)$};
\draw (50,108.87) node [anchor=north] [inner sep=0.75pt]    {$\delta $};
\draw (120,109.12) node [anchor=north] [inner sep=0.75pt]    {$\delta _{1}$};
\draw (160,109.87) node [anchor=north] [inner sep=0.75pt]    {$\delta _{2}$};
\draw (180,110.12) node [anchor=north] [inner sep=0.75pt]    {$\delta _{3}$};
\draw (230,109.87) node [anchor=north] [inner sep=0.75pt]    {$\delta _{4}$};
\draw (330,109.37) node [anchor=north] [inner sep=0.75pt]    {$\delta _{5}$};

\end{tikzpicture}